\theoremstyle{plain}
\newtheorem{X}{X}[section]
\newtheorem{definition}[X]{Definition}
\newtheorem{lemma}[X]{Lemma}
\newtheorem{proposition}[X]{Proposition}
\newtheorem{theorem}[X]{Theorem}
\newtheorem{hypothesis}[X]{Hypothesis}
\newtheorem*{hyp}{Hypothesis \ref{bombieri-vinogradov}*}
\newtheorem*{rem}{Remark}
\newtheorem*{thm}{Theorem \ref{theoreme principal}*}
\theoremstyle{definition}
\newtheorem{remark}[X]{Remark}
\newcommand{\A}{\mathcal{A}}
\newcommand{\M}{\mathcal{M}}
\renewcommand{\a}{\mathbf{a}}
\newcommand{\K}{\mathbf{k}}
\newcommand{\h}{\mathbf{h}}
\newcommand{\LL}{\mathbf{L}}
\newcommand{\f}{\mathbf{f}}
\newcommand{\g}{\mathbf{g}}
\newcommand{\R}{\mathbf{R}}
\newcommand{\Ll}{\mathcal{L}}
\renewcommand{\S}{\mathcal S}
\renewcommand{\P}{\mathcal P}
\title{The influence of the first term of an arithmetic progression}
\author{Daniel Fiorilli}
\address{D\'epartement de math\'ematiques et de statistique \\ Universit\'e de Montr\'eal \\ CP 6128, succ.\ Centre-ville \\ Montr\'eal, QC \\ Canada H3C 3J7}
\email{fiorilli@dms.umontreal.ca}
\begin{document}

\begin{abstract}
The goal of this article is to study the discrepancy of the distribution of arithmetic sequences in arithmetic progressions. We will fix a sequence $\A=\{\a(n)\}_{n\geq 1}$ of non-negative real numbers in a certain class of arithmetic sequences. 
For a fixed integer $a\neq 0$, we will be interested in the behaviour of $\A$ over the arithmetic progressions $a \bmod q$, on average over $q$. Our main result is that for certain sequences of arithmetic interest, the value of $a$ has a significant influence on this distribution, even after removing the first term of the progressions. 
\end{abstract}
\maketitle

\tableofcontents

\section{Introduction}
The study of arithmetic sequences is a central problem in number theory. Undoubtedly, it is the sequence of prime numbers which has attracted the most attention amongst number theorists, 
leading to many theorems and conjectures. Other important sequences include sums of two squares, twin primes, divisor sequences and so on. In general, number theorists are interested in sequences with arithmetical content, and one can formally define wide classes of such sequences. Some phenomena occurring in the theory of prime numbers happen to be true for much wider classes of arithmetic sequences, such as the Bombieri-Vinogradov theorem for example (see \cite{motohashi}). Another example is the Granville-Soundararajan uncertainty principle (see \cite{uncertainty}).

We will fix an integer $a\neq 0$ and study the distribution of an arithmetic sequence $\A=\{\a(n)\}_{n\geq 1}$ in the progressions $a\bmod q$, on average over $q$. Under certain hypotheses, we will show how certain sequences remember the first term, that is how the value of $a$ can influence the distribution of $\A$ in the progressions $a\bmod q$. Examples of such sequences include the sequences of primes, sums of two squares (or more generally values of positive definite binary quadratic forms), prime $k$-tuples (conditionally) and integers without small prime factors. We will see that in each of these examples, values of $a$ which have the property that $\a(a)>0$ have a negative influence. More mysteriously, there are other values of $a$ having a negative influence, and it is not clear to me why these come up. 

The structure of the paper will be as follows. We first give concrete examples to highlight the phenomena we want to describe. Then we give a framework to study arithmetic sequences, as well as the definitions which will be needed. We also state the hypotheses on which our main theorems will depend. We then state the main results, leaving the proofs to the final chapters. 

\subsection{Acknowledgements}
I would like to thank my supervisor Andrew Granville for suggesting this generalization and for his help and advice in general. I would also like to thank my colleagues Farzad Aryan, Mohammad Bardestani, Dimitri Dias, Tristan Freiberg and Kevin Henriot for many fruitful conversations. Ce travail a été rendu possible grâce à des bourses doctorales du Conseil de Recherche en Sciences Naturelles et en Génie du Canada et de la Faculté des Études Supérieures et Postdoctorales de l'Université de Montréal.

\section{Examples}
\label{section exemples}

Before we state the general result, let us look at concrete examples. Throughout, $\A=\{\a(n)\}_{n\geq 1}$ will be a fixed sequence of non-negative real numbers and $a\neq 0$ will be a fixed integer, on which every error term can possibly depend. We will adopt the convention that for negative values of $a$, $\a(a):=0$ (and similarly for $\Lambda(a)$). Moreover, $M=M(x)$ will denote a function tending to infinity with $x$, and we will use $\sim$ as shorthand for $\sim_{M\rightarrow \infty}$ (similarly for $o(\cdot)=o_{M\rightarrow \infty}(\cdot)$). We define the following counting functions.
\begin{definition}
 \begin{equation*}
\A(x) :=\sum_{1\leq n\leq x} \a(n),  \hspace{1cm} \A_d(x):=\sum_{\substack{1 \leq n\leq x :\\ d \mid n}} \a(n),  \hspace{1cm} \A(x;q,a):=\sum_{\substack{1 \leq n\leq x \\ n\equiv a \bmod q}} \a(n).
\end{equation*}
\end{definition}

\subsection{Primes} The first example we give, which was studied more precisely in \cite{fiorilli}, is the sequence of prime numbers.

\begin{theorem}
Let $A>0$ be a fixed real number. We have for $M=M(x) \leq (\log x)^A$ that
\label{resultat premiers}
\begin{equation*}
 \frac 1{\frac{\phi(a)} a \frac xM} \sum_{\substack{q\leq \frac xM \\ (q,a)=1}} \left( \psi(x;q,a) - \Lambda(a) -\frac{\psi(x)}{\phi(q)}  \right)  \hspace{0.5cm} \text{ is } \hspace{0.5cm} \begin{cases}                                                                          
		\sim -\frac12 \log M  &\text{ if } a= \pm 1, \\
		\sim -\frac12 \log p   &\text{ if } a= \pm p^e, \\
		 \displaystyle = O_{\epsilon}\big( M^{-\frac{205}{538}+\epsilon} \big) &\text{ otherwise. }
   \end{cases}
\end{equation*} 
\end{theorem}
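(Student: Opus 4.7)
My approach is to swap summation, apply Dirichlet's hyperbola method, and reduce the problem to sums of primes in arithmetic progressions that one can handle by Siegel--Walfisz (for small moduli) and by the strongest available Bombieri--Vinogradov-type theorems (for large moduli).

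Set $Q = x/M$. The first step is to expand $\psi(x;q,a)$ and notice that the term $n=a$ contributes exactly $\Lambda(a)$ whenever $(q,a)=1$, so
\[\sum_{\substack{q \leq Q \\ (q,a)=1}} \bigl(\psi(x;q,a) - \Lambda(a)\bigr) = \sum_{\substack{n \leq x \\ n \neq a}} \Lambda(n)\, N(n,a,Q), \quad N(n,a,Q) := \#\{q \leq Q : q \mid n-a,\ (q,a)=1\}.\]
The terms with $n < a$ contribute $O_a(1)$ because then $|n-a| < |a|$ forces $q \leq |a|$. For $n > a$, write $m = n-a$ and apply the hyperbola trick: divisors of $m$ exceeding $Q$ correspond under $d \leftrightarrow m/d$ to divisors less than $m/Q$. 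After detecting the coprimality condition by M\"obius inversion over divisors of $a$, $N(n,a,Q)$ becomes a linear combination of restricted divisor sums.

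Swapping summation a second time converts everything into quantities of the shape
\[\sum_{\substack{(d,a)=1 \\ d \leq D}} \bigl(\psi(x;d,a) - \psi(\max(a+dQ, a);d,a)\bigr)\]
in two regimes. In the short range $d \leq M \leq (\log x)^A$ the Siegel--Walfisz theorem gives each term as $(\psi(x)-\psi(a+dQ))/\phi(d) + O(x\exp(-c\sqrt{\log x}))$. In the long range, up to $d\sim x$, one needs the sharpest quantitative form of the Titchmarsh divisor problem, which rests on the dispersion method of Linnik and Bombieri--Friedlander--Iwaniec together with Kloosterman-sum averages; this is what produces the stated exponent $-205/538+\epsilon$.

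With all error terms in hand, extracting the main term is a careful asymptotic exercise. Using $\sum_{d \leq Y,\,(d,a)=1} 1/\phi(d) = (\phi(a)/a)(\log Y + C_a) + O(1/Y)$ together with the prime number theorem, the three main-term contributions --- from the Titchmarsh-type sum, from the reverse hyperbola sum, and from subtracting $\psi(x)\sum_q 1/\phi(q)$ --- largely cancel. What survives is $-(\phi(a)/a)(x/M)\cdot \tfrac12 \log M$ when $a = \pm 1$; when $a = \pm p^e$ the constraint $(q,a)=1$ strips the $p$-part of each $n-a$ from the hyperbola sum and collapses the bias to $-\tfrac12 \log p$; and in every remaining case $\Lambda(a) = 0$ and the surviving main terms cancel, leaving only the error from the long range. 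The principal obstacle is precisely that long-range estimate: the exponent $-205/538+\epsilon$ is not accessible through Bombieri--Vinogradov alone, and all the real work is in the dispersion argument. A secondary difficulty lies in the bookkeeping of the main-term calculation, where the constant $-\tfrac12$ emerges only after careful partial summation and one must keep track of the different arithmetic regimes in $a$ to match the three cases.
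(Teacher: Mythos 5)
Your overall skeleton --- expand $\psi(x;q,a)$, swap summation, divisor-switch the large moduli, then treat the switched sums by Siegel--Walfisz in the range $d\le M$ and by a BFI-type input for the rest --- is the same strategy the paper uses: it is precisely the decomposition $S_1-S_2-S_3+S_4$ in the proof of Theorem \ref{theoreme principal}, specialized to $\a(n)=\Lambda(n)$. However, there are two concrete gaps in the proposal.

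First, you attribute the exponent $-\tfrac{205}{538}+\epsilon$ to the dispersion-method/Kloosterman input for the long range. That is not where it comes from. The BFI estimate (Theorem 9 of \cite{BFI}, which is what verifies Hypothesis \ref{bombieri friedlander iwaniec} here) saves only an arbitrary power of $\log x$; this suffices solely because $M\le(\log x)^A$, so a $(\log x)^{-A(1+\delta)}$ saving already beats any fixed power of $M$. The exponent $\tfrac{205}{538}=\bigl(2+4\cdot\tfrac{32}{205}\bigr)^{-1}$ is produced by Huxley's subconvexity bound $\zeta(\tfrac12+it)\ll_{\epsilon} t^{32/205+\epsilon}$, applied when evaluating the purely arithmetic sums $\sum_{r\le Y,\,(r,a)=1}\frac{1}{\phi(r)}(1-\tfrac rY)$ by shifting the contour of $\int\zeta(s+1)\zeta(s+2)(\cdots)Y^s\,ds/(s(s+1))$ past $\Re s=-1$; this is the improved form of Proposition \ref{proposition evaluation de la somme arithmetique} indicated in the paper. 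As written, your plan would not recover a power saving in $M$ from the source you name.

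Second, the main-term extraction cannot work as stated. You expand $\sum_{d\le Y,(d,a)=1}1/\phi(d)$ as $C(a)(\log Y+C_a)+O(1/Y)$ and claim the survivor of the cancellation is $-\tfrac12\tfrac{\phi(a)}{a}\tfrac xM\log M$. But the terms $C(a)(\log Y+C_a)$ cancel identically among the three sums --- that is the point of the telescoping --- so the bias must come from the \emph{second-order} term of these expansions, of size $\asymp(\log Y)/Y$ (analytically, the double pole of $\zeta(s+2)/(s+1)$ at $s=-1$). An expansion whose remainder is an inexplicit $O(1/Y)$ (not the true order anyway; the classical error for $\sum_{n\le Y}1/\phi(n)$ is $O((\log Y)/Y)$) contains no such term, so the claimed main term does not follow from what you wrote: you would only get $O(x/M)$ for the whole sum. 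The same secondary-term analysis, localized at $p$, is what yields $-\tfrac12\log p$ for $a=\pm p^e$ and the vanishing to within the power saving when $\omega(a)\ge2$; ``stripping the $p$-part'' is the right intuition, but it must be carried out as an explicit local-factor computation, which in the paper is the evaluation of $\mu_0(a,M)$ via Remark \ref{remarque k=0}.
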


\subsection{Integers represented by a fixed binary quadratic form, with multiplicity}
The second example we consider is the sequence of integers which can be represented by a fixed binary quadratic form $Q(x,y)$ with integer coefficients, counted with multiplicity, that is $$\a(n):=\#\{(x,y)\in \mathbb Z_{\geq 0}^2:Q(x,y) = n\}.$$
We will define $r_d(n)$ to be the total number of distinct representations of $n$ by all of the inequivalent forms of discriminant $d$ (which is not to be confused with $\a(n)$). By distinct representations, we mean that we count the representations up to automorphisms of the forms. We also define the function
\begin{equation*} \rho_a(q) := \frac 1q \cdot \#\{ 1\leq x,y \leq q : Q(x,y) \equiv a \bmod q \}.\end{equation*}
\begin{theorem}
\label{resultat formes quad generales}
Suppose that $Q(x,y)=\alpha x^2+\beta xy + \gamma y^2$ is a fixed positive definite quadratic form (with integer coefficients) of discriminant $d:= \beta^2 - 4 \alpha \gamma <0$, with $d \equiv 1,5,9,12,13 \bmod 16$ (for simplicity). Fix an integer $a$ such that $(a,2d)=1$. We have for $M= M(x) \leq x^{\lambda}$, where $\lambda < \frac 1{12}$ is a fixed real number, that
\begin{equation}
\frac 1{x/M}\sum_{\substack{q\leq \frac xM }} \left( \A(x;q,a) - \a(a) -\frac{\rho_a(q)}{q}\A(x)  \right)  =  - C_Q \rho_a(4d) r_d(|a|)+O_{\epsilon}\left( \frac 1{M^{1/3 - \epsilon}} \right),
\label{equation resultat formes quad generales}
\end{equation}
with 
$$ C_Q:=\frac{A_{Q}}{2L(1,\chi_d)} \hspace{1cm} \left(= \frac{w_d \sqrt{|d|}}{4\pi h_{d}}A_{Q}\right),$$
where $A_{Q}$ is the area of the region $\{(x,y) \in \mathbb R_{\geq 0}^2 : Q(x,y) \leq 1\}$, $\chi_d:=\left( \frac{4d}{\cdot}\right)$,
$w_d$ is the number of units of $\mathbb Q(\sqrt d)$ and $h_{d}$ is its class number.
\end{theorem}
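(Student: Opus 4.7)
The plan is to evaluate the two sums $\sum_{q\leq Q}\A(x;q,a)$ and $\A(x)\sum_{q\leq Q}\rho_a(q)/q$ separately, with $Q:=x/M$, and then compare them.

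First I would detect the congruence $n\equiv a\pmod q$ via divisibility and switch the order of summation, obtaining
\begin{equation*}
\sum_{q \leq Q} \A(x;q,a) \;=\; \a(a)\, Q \;+\; \sum_{\substack{n \leq x \\ n \neq a}} \a(n)\, \tau_Q(|n-a|),
\end{equation*}
where $\tau_Q(m) := \#\{q \leq Q : q \mid m\}$. The isolated term $\a(a)\, Q$ precisely matches the $\a(a)$ subtraction in the theorem after normalization by $Q$. For shifts with $|n-a|\leq Q$ one has $\tau_Q(|n-a|) = \tau(|n-a|)$; for larger shifts the decomposition $\tau_Q(m) = \tau(m) - \#\{d \mid m : d < m/Q\}$ separates the near and far ranges.

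For the predicted count I would exploit that $\rho_a(q)/q$ is multiplicative in $q$; using $(a,2d)=1$, its Dirichlet series factors as $\zeta(s)\, L(s,\chi_d)$ times a finite Euler product, so Perron's formula gives a sharp asymptotic for $\sum_{q \leq Q}\rho_a(q)/q$. Combined with the classical lattice-point expansion $\A(x) = A_Q\, x + O(x^{1/3})$, this isolates the predicted main piece. The core of the argument is then the shifted convolution $\sum_{n \leq x,\, n \neq a} \a(n)\, \tau_Q(|n-a|)$: using the theta-series decomposition of $\a$ along the genus of $Q$ (the congruence hypothesis on $d \bmod 16$ is there to ensure a clean genus decomposition), one replaces $\a$ by appropriate pieces of $r_d$ and reduces the problem to shifted divisor sums of the form $\sum_n r_d(n)\, \tau_Q(|n-a|)$, amenable to the Dirichlet hyperbola method. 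The dominant contribution cancels against $\A(x)\sum_{q\leq Q}\rho_a(q)/q$; the residual diagonal term, coming from $n$ with $|n-a|$ small and representable by the form, produces precisely $-C_Q\,\rho_a(4d)\,r_d(|a|)\,Q$, with the factor $r_d(|a|)$ arising from representations of $|a|$ by the full genus and $\rho_a(4d)$ from the local densities at the ramified primes.

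The main obstacle is uniform error control after summation over $q$. For each individual $q$ the lattice-point boundary error is of order roughly $(x/q^2)^{1/3}$ per residue class times $\rho_a(q)\, q$ classes, which when summed trivially over $q\leq x/M$ is far too large to reach the stated exponent. To obtain the bound $M^{-1/3+\epsilon}$ one must exploit cancellation coming from the modularity of the theta series $\sum_n \a(n) e(nz)$, accessed through a Voronoï-type dual sum, and split the range of $q$ at about $x^{1/2}$ into a small-$q$ regime handled by elementary lattice-point bounds and a large-$q$ regime handled by the dual sum. The restriction $\lambda<1/12$ emerges as the exact balance point between these two regimes.
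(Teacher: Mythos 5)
Your combinatorial setup is essentially the paper's: the identity $\sum_{q\leq Q}\A(x;q,a)=\a(a)Q+\sum_{n\neq a}\a(n)\tau_Q(|n-a|)$, with large divisors of $n-a$ traded for their small codivisors $r=(n-a)/q<M$, is exactly the Hooley--Montgomery divisor switch used in the proof of Theorem \ref{theoreme principal}, and your factorization of the Dirichlet series of $\rho_a(q)/q$ as $\zeta(s+1)L(s+2,\chi_d)$ times a finite Euler product is how the paper extracts both the main term and the bias. But the technical core of your argument is missing. After the switch (or the hyperbola method on $\sum_n\a(n)\tau(|n-a|)$, which your decomposition $\tau_Q=\tau-\#\{d<m/Q\}$ forces you to confront in full), you must evaluate $\A(y;r,a)$ for moduli $r$ ranging up to $\sqrt{x}$ with a power-saving error \emph{summable over $r$}; counting divisors $d\mid(n-a)$ weighted by $\a(n)$ is precisely $\A(x;d,a)$ again, so the hyperbola method is circular without an external equidistribution input. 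The paper supplies this via Plaksin's theorem ($\A(x;q,a)=\g_a(q)\A(x)+O_\epsilon((x/q)^{3/4+\epsilon})$ for $q\leq x^{1/3}$, and $O_\epsilon(x^{2/3+\epsilon}q^{-1/2})$ up to $q\leq x^{2/3}$), summed over $q\leq\sqrt x$ to verify Hypotheses \ref{bombieri-vinogradov} and \ref{bombieri friedlander iwaniec}; it is this summation that forces $\lambda<1/12$, not a balance between an elementary regime and a Vorono\"{\i} dual sum. Your proposed substitute --- a shifted convolution of the theta series against a truncated divisor function, uniform in the truncation down to $Q=x/M$ with $M$ a fixed power of $x$ --- is a genuinely harder problem that you name but do not carry out, and the genus decomposition you invoke only recovers $\a(n)$ up to class-group characters (the congruence on $d\bmod 16$ is there to control the local factor $R_a(2^e)$, not the genus theory).

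The second gap is the source of the main term. The bias $-C_Q\rho_a(4d)r_d(|a|)$ is not a ``residual diagonal term from $n$ with $|n-a|$ small'': it is the second-order term in the partial sums of the multiplicative density, i.e.\ the contribution of the point $s=-1$ (where $\zeta(s+1)$ has its pole) to the Perron integral of $\mathfrak S_3(s)\zeta(s+1)L(s+2,\chi_d)/(s(s+1))$, with the local computation $\mathfrak S_2(-1)=\frac{R_a(4d)}{4d}\prod_{p\mid a}(1-\chi_d(p)/p)\,r_d(|a|)$ producing the factor $r_d(|a|)$ via the identity $r_d(n)=\sum_{m\mid n}\chi_d(m)$, and $L(1,\chi_d)$ entering through the class number formula. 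The exponent in the error term $O_\epsilon(M^{-1/3+\epsilon})$ likewise comes from shifting that contour to $\Re s=-4/3+\epsilon$ using the convexity bound for $L(s,\chi_d)$, a step your outline has no analogue of. To repair the proposal you would need to (i) cite or prove the Plaksin-type individual equidistribution estimate and sum it over $q\leq\sqrt x$, and (ii) replace the ``diagonal'' heuristic by the contour-integral evaluation of $\sum_{r\leq M}\rho_a(r)/r$ and its smoothed variants near $s=-1$.
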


\begin{rem}
 The number $\rho_a(4d)$ is either zero or equal to $2^{\omega(2d)}$, $2^{\omega(2d)-2}$ or $3\cdot 2^{\omega(2d)-2}$, depending on $Q(x,y)$ ($\omega(n)$ denotes the number of prime factors of $n$). For this reason, if $\rho_a(4d)>0$, then it is independent of $a$.
\end{rem}

Therefore, there is no bias if $\rho_a(4d)=0$ or if $|a|$ cannot be represented by a form of discriminant $d$. 
However, if this is not the case, then the bias is proportional to the number of such representations.

\subsection{Sums of two squares, without multiplicity}
The next example is the sequence of integers which can be written as the sum of two squares, without multiplicity.
We define
$$\a(n):=\begin{cases}
               1 &\text{ if } n=\square+\square,\\
		0 &\text{ else.}
              \end{cases}
$$ 
For a fixed odd integer $a$, we define the multiplicative function $\g_a(q)$ on prime powers as follows. 
For $p\neq 2$ such that $p^f \parallel a$ with $f\geq 0$,
\begin{equation}\g_a(p^e) := \frac 1{p^e} \times \begin{cases}
                1 & \text{ if } p\equiv 1 \bmod 4 \\
		1 & \text{ if } p\equiv 3 \bmod 4, e\leq f, 2\mid e \\
		\frac 1p & \text{ if } p\equiv 3 \bmod 4, e\leq f, 2\nmid e \\
		1+ \frac 1p & \text{ if } p\equiv 3 \bmod 4, e>f, 2\mid f \\
		0 & \text{ if } p\equiv 3 \bmod 4, e>f, 2\nmid f.
               \end{cases}
\label{def de g_a pour poids 1}
\end{equation}
Moreover, $\g_a(2):=\frac 12$ and for $e\geq 2$, $\g_a(2^e):=\frac {1+(-1)^{\frac{a-1}2}}{2^{e+2}}$.
\begin{theorem}
 Fix an integer $a\equiv 1 \bmod 4$. We have for $1\leq M(x) \leq (\log x)^{\lambda}$, where $\lambda <1/5$ is a fixed real number, that  
\begin{multline}
\frac 1{x/{2M}}\sum_{\substack{\frac x{2M}< q\leq \frac xM }} \left( \A(x;q,a) - \a(a) -\g_a(q)\A(x)  \right)   \\
\sim -\left(\frac{\log M}{\log x}\right)^{\frac 12} \frac{(-4)^{-l_a-1}(2l_a+2)!}{(4l_a^2-1)(l_a+1)!\pi}  \prod_{\substack{p^f \parallel a: \\ p\equiv 3 \bmod 4, \\ f \text{odd}}} \frac{\log (p^{\frac{f+1}2})}{\log M},
\label{equation resultat dyadique poids 1}
\end{multline}
where $l_a:= \#\{ p^f \parallel a : p\equiv 3 \bmod 4, 2 \nmid f \}$ is the number primes dividing $a$ to an odd power which are congruent to $3$ modulo $4$.
\label{resultat dyadique poids 1}
\end{theorem}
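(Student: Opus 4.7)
The plan is to apply the general framework developed earlier in the paper (Theorem \ref{theoreme principal}) to the sequence $\A$ of integers expressible as a sum of two squares, and to carry out the explicit computation of the local and global constants appearing in the bias. The key arithmetic input is the Dirichlet series factorization
\begin{equation*}
\sum_{n \ge 1} \a(n) n^{-s} = (\zeta(s) L(s, \chi_{-4}))^{1/2} H(s),
\end{equation*}
with $H(s)$ holomorphic and non-vanishing on $\mathrm{Re}(s) > 1/2$; this is obtained Euler factor by Euler factor from the characterization that $n$ is a sum of two squares iff every prime $p \equiv 3 \bmod 4$ dividing $n$ does so to an even power. The half-integer singularity of $(\zeta L)^{1/2}$ at $s = 1$ underlies Landau's asymptotic $\A(x) \sim K x/\sqrt{\log x}$ and, through a Selberg-Delange type analysis, is what will produce the global factor $(\log M/\log x)^{1/2}$ in the statement.

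Next I would verify the hypotheses of the main theorem for $\A$, which reduce to a Bombieri-Vinogradov style bound for $\A$ on average over moduli $q$ up to a small power of $\log x$, together with the required multiplicative smoothness of $\g_a(q)$. Such a Bombieri-Vinogradov statement for sums of two squares is classical in a suitable range and accounts for the restriction $\lambda < 1/5$. Once applicable, the main theorem rewrites the average via the interchange of summation
\begin{equation*}
\frac{2M}{x} \sum_{x/(2M) < q \le x/M} (\A(x;q,a) - \a(a)) = \frac{2M}{x}\sum_{\substack{n \le x\\ n \neq a}} \a(n) \cdot \#\{q \mid n - a : x/(2M) < q \le x/M \},
\end{equation*}
together with the analogous expansion of $\sum_q \g_a(q) \A(x)$ using the multiplicativity of $\g_a$, and reduces the bias to a contour integral of a weighted transform of the Dirichlet series above.

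The main obstacle will be the explicit local computation at primes $p \equiv 3 \bmod 4$ dividing $a$ to an odd power. For such a prime with $p^f \| a$, the condition that $n = a + kq$ be a sum of two squares forces an extra constraint on $v_p(q)$ for those $q$ actually contributing to the bias; this produces the local weight $\log(p^{(f+1)/2})/\log M$ visible in \eqref{equation resultat dyadique poids 1}. The count $l_a$ of such primes then governs the order of vanishing at $s = 1$ of the corresponding local factor in the Dirichlet series, and the combinatorial constant $(-4)^{-l_a-1}(2l_a+2)!/((4l_a^2-1)(l_a+1)!\pi)$ emerges from the $(l_a + 1)$-fold residue computation, including a Beta-type integral arising from the half-integer pole. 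Assembling these local contributions against the global scale recovers \eqref{equation resultat dyadique poids 1}.
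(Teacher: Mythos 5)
Your overall strategy -- specialize the general average theorem to the indicator function of sums of two squares, use the square-root singularity of $(\zeta(s)L(s,\chi_{-4}))^{1/2}$ to produce both Landau's asymptotic for $\A(x)$ and the factor $(\log M/\log x)^{1/2}$, and extract the combinatorial constant from the local factors at primes $p\equiv 3\bmod 4$ dividing $a$ to an odd power -- is the same as the paper's (here $\K=1/2$, $\omega_{\h}(a)=l_a$, and the constant is $1/\Gamma(3/2-l_a)$ in disguise). But there are two concrete gaps. First, you invoke Theorem \ref{theoreme principal}, the full-interval version, whose proof requires Hypothesis \ref{bombieri friedlander iwaniec}: control of $\sum_{q\leq x/\R}\bigl(\A^*(x;q,a)-\g_a(q)\A(x)\bigr)$ for moduli going up to nearly $x$. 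That hypothesis is not known for sums of two squares, which is precisely why the statement is dyadic and why the paper proves it via Theorem \ref{theoreme principal}*, where the tail sum $S_4$ over small moduli never appears. Your plan as written would stall at that hypothesis.

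Second, the equidistribution input is not a ``classical Bombieri--Vinogradov''. Since the moduli needed are only $q\leq 2(\log x)^{\lambda}$, no averaging over $q$ is required; what the paper uses is Iwaniec's half-dimensional-sieve asymptotic for \emph{individual} $q$, namely $\A(x;q,a)=\g_a(q)\A(x)\bigl(1+O\bigl((\log q/\log x)^{1/5}\bigr)\bigr)$, summed trivially over $q\leq 2(\log x)^{\lambda}$. The exponent $1/5$ in that error term -- not the range of a BV theorem -- is the source of the restriction $\lambda<1/5$; identifying this input is the one genuinely nontrivial analytic step, and calling it classical elides it. Two smaller points: the prime $2$ must be treated as a bad prime ($\S=\{2\}$, with a modified local factor at $2$ in $\mathfrak S_2(s)$, which is where the hypothesis $a\equiv 1\bmod 4$ enters); and the final evaluation at $s=-1$ is a Hankel-contour computation giving $1/\Gamma(3/2-l_a)$ rather than a residue, since for $\K=1/2\notin\mathbb Z$ the integrand has a branch point, not a pole, at $s=-1$. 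Your heuristic that the weight $\log(p^{(f+1)/2})$ reflects a constraint on $v_p(q)$ is in the right spirit; in the paper it arises as the derivative at $s=-1$ of the local Euler factor, which has a simple zero there for each prime counted by $l_a$.
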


\begin{rem}
The right hand side of \eqref{equation resultat dyadique poids 1} is $o((\log x)^{-1/2})$ iff $|a|$ cannot be written as the sum of two squares. Also, if $|a|=\square+\square$, then it is equal to
$-\frac 1{2\pi}\left(\frac{\log M}{\log x}\right)^{\frac 12}.$ Moreover, one can show that if $a\equiv 3 \bmod 4$, then the left hand side of \eqref{equation resultat dyadique poids 1} is always $o((\log x)^{-1/2})$.
\end{rem}

\subsection{Prime $k$-tuples}
The next example concerns prime $k$-tuples. Let $\mathcal H= \{ \Ll_1,...,\Ll_k\}$ be a $k$-tuple of distinct linear forms $\Ll_i(n)=a_i n + b_i$, with $a_i,b_i \in \mathbb Z$, $a_i \geq 1$, and define $$\P(n;\mathcal H):=\prod_{\Ll \in \mathcal H} \Ll(n).$$ We will suppose that $\mathcal H$ is admissible, that is for every prime $p$,
$$\nu_{\mathcal H} (p) := \# \{ x \bmod p : \P(x;\mathcal H) \equiv 0 \bmod p \} < p.$$
Define
$$ \a(n):= \prod_{\Ll \in \mathcal H} \Lambda(\Ll(n)) = \Lambda(a_1n+b_1) \Lambda(a_2n+b_2) \cdots \Lambda(a_k n+b_k).$$
The singular series associated to $\mathcal H$ is $$ \mathfrak S(\mathcal H) := \prod_p \left(1- \frac{\nu_{\mathcal H}(p)}{p} \right) \left( 1-\frac 1p\right)^{-k}.$$
Note that if $(\mathcal \P(a;\mathcal H),q)>1$, then $\A(x;q,a)$ is bounded. 
Fix $\delta>0$. The Hardy-Littlewood conjecture stipulates that there exists a function $\LL(x)$ tending to infinity with $x$ such that if $(\P(a;\mathcal H),q)=1$,
\begin{equation} \A(x) = \mathfrak S(\mathcal H) x + O\left(\frac x{\LL(x)^{2+2\delta}} \right).
\label{Hardy-Littlewood}
\end{equation}
Define
$$ \gamma(q):= \prod_{p\mid q} \left( 1- \frac {\nu_{\mathcal H} (p)}p \right).$$

\begin{theorem}
\label{resultat ktuplets}
 Assume that \eqref{Hardy-Littlewood} holds uniformly for all admissible $k$-tuples $\mathcal H$ such that $|a_i| \leq \LL(x)^{1+\delta}$ and $|b_i|=O(1)$. Then we have for $M=M(x)\leq \LL(x)$ that the average
\begin{multline*}
\frac 1{ \frac{\phi(\P(a;\mathcal H))}{\P(a;\mathcal H)}\frac{x}{2M}}\sum_{\substack{\frac x{2M}< q\leq \frac xM : \\(q,\P(a;\mathcal H))=1  }} \left( \A(x;q,a) - \a(a) - \frac{\A(x)}{q \gamma(q)}   \right)  \text{ is } \\
 \begin{cases} \displaystyle        
 \sim -\frac{(\log M)^{k-\omega(P(a;\mathcal H))}}{2(k-\omega(\P(a;\mathcal H)))!} \prod_{p\mid \P(a;\mathcal H)} \frac{p-\nu_{\mathcal H}(p)}{p-1}\log p  & \text{ if } \omega(\P(a;\mathcal H)) \leq k, \\
=O\left( \frac 1{M^{\delta_k}}\right) & \text{ otherwise, }
             \end{cases}
\end{multline*}
where $\delta_k>0$ is a positive real number depending on $k$, and $\omega(n)$ denotes the number of prime factors of $n$.
\end{theorem}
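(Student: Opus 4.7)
The approach is analogous to the proof of Theorem \ref{resultat premiers}. Fix $q$ in the dyadic range $(x/(2M), x/M]$ with $(q, \P(a;\H)) = 1$, and decompose
\[
\A(x;q,a) = \a(a) + \sum_{1 \leq m \leq (x-a)/q} \prod_{i=1}^k \Lambda(a_i q m + \Ll_i(a)).
\]
The inner sum is the partial sum, over $m$, for the shifted $k$-tuple $\H_{q,a} := \{a_i q m + \Ll_i(a)\}_{i}$. A local computation shows that $\H_{q,a}$ is admissible under our coprimality assumption, and that its singular series factors cleanly as $\mathfrak{S}(\H_{q,a}) = \mathfrak{S}(\H)/\gamma(q)$: at primes $p \mid q$, each form reduces to the unit $\Ll_i(a) \bmod p$, giving $\nu_{\H_{q,a}}(p) = 0$, while at primes $p \nmid q$ the substitution $m \mapsto a + qm \pmod{p}$ is a bijection preserving zero residues, so $\nu_{\H_{q,a}}(p) = \nu_\H(p)$.

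Next I apply the Hardy--Littlewood hypothesis \eqref{Hardy-Littlewood} both to the shifted sum (the uniformity $|a_i q| \leq \LL(x)^{1+\delta}$ holds throughout our range of $q$) and to $\A(x)$ itself. Using the factorization above, the comparison of main terms yields
\[
\A(x;q,a) - \a(a) - \frac{\A(x)}{q\gamma(q)} = -\frac{\mathfrak{S}(\H)\, a}{q\gamma(q)} + O\!\left(\frac{x}{q\gamma(q)\,\LL(x)^{2+2\delta}}\right),
\]
so the first-order discrepancy per $q$ is only $O(1/q)$. Averaged over $q$ and normalized by $\tfrac{\phi(\P(a;\H))}{\P(a;\H)}\tfrac{x}{2M}$, this contributes merely $O(M\log M /x)$, far smaller than the claimed $(\log M)^{k-\omega(\P(a;\H))}$. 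The stated asymptotic must therefore come from a secondary contribution that the crude Hardy--Littlewood estimate absorbs into its error.

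To extract this secondary contribution, I write $\Lambda(n) = -\sum_{d \mid n} \mu(d) \log d$ and expand the shifted sum as a $k$-fold divisor sum. The crucial point is that the constraint $d_i \mid a_i q m + \Ll_i(a)$ behaves very differently depending on whether the primes dividing $d_i$ lie in $\P(a;\H)$: for $p \mid \P(a;\H)$ with $p \nmid q$ one gets $a_i q m \equiv -\Ll_i(a) \pmod{p}$, a condition in which the residue of $\Ll_i(a)$ (and hence the arithmetic of $\a(a)$) is essential, whereas for $p \nmid \P(a;\H)$ the divisibility is a generic $1/p$ density. Partitioning the index set $\{1, \ldots, k\}$ into ``constrained'' indices (those whose $d_i$ is supported on $\P(a;\H)$-primes) and ``free'' indices, the former assemble into $\prod_{p \mid \P(a;\H)} \frac{p - \nu_\H(p)}{p-1} \log p$, while each free index contributes, upon summing over primes $\leq M$, a factor of order $\log M$; combinatorially the free indices yield $(\log M)^{k-\omega(\P(a;\H))}/(k-\omega(\P(a;\H)))!$ by the standard nested prime-sum estimate. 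Summing over the dyadic range of $q$ then produces the factor $1/2$, and dividing by the normalization gives the claimed main term. When $\omega(\P(a;\H)) > k$ there is no admissible allocation of the $k$ divisor indices over the $\omega(\P(a;\H))$ distinct prime constraints, the secondary term vanishes by inclusion--exclusion, and only the error $O(M^{-\delta_k})$ remains.

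The main obstacle will be the careful execution of the divisor expansion in the previous paragraph: one must interchange the summations over $m$, $(d_1, \ldots, d_k)$, and $q$ while preserving the secondary main term and keeping all errors uniform in $q$. Because the Hardy--Littlewood error $O(x/\LL(x)^{2+2\delta})$ would otherwise swamp a bias of size $(\log M)^{k-\omega(\P(a;\H))}$, the secondary main term cannot simply be read off from an $L$-function estimate but must be tracked explicitly through the divisor expansion, very much in the spirit of the proof of Theorem \ref{resultat premiers}.
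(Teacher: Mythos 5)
Your reduction of $\A(x;q,a)-\a(a)$ to the shifted tuple $\{a_iqm+\Ll_i(a)\}_i$ and the computation $\mathfrak S(\H_{q,a})=\mathfrak S(\H)/\gamma(q)$ are correct (this is exactly the paper's lemma on modified $k$-tuples), but the step where you apply the Hardy--Littlewood hypothesis to these shifted tuples contains a fatal error: for $q$ in the dyadic range $x/(2M)<q\leq x/M$ with $M\leq \LL(x)$, the leading coefficients satisfy $|a_iq|\geq x/(2\LL(x))$, which is vastly larger than the assumed uniformity range $|a_i|\leq\LL(x)^{1+\delta}$. Your parenthetical claim that ``$|a_iq|\leq\LL(x)^{1+\delta}$ holds throughout our range of $q$'' is simply false, and with it the displayed identity for the per-modulus discrepancy and the conclusion that the main terms cancel to $O(1/q)$. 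Knowing the distribution of the shifted tuple for such $q$ amounts to counting prime $k$-tuples in progressions of modulus nearly $x$, i.e.\ over only about $M$ values of $m$, which no form of the stated hypothesis provides. The subsequent ``secondary contribution'' extracted from $\Lambda(n)=-\sum_{d\mid n}\mu(d)\log d$ is then a heuristic built on this invalid foundation: the combinatorial assembly of $(\log M)^{k-\omega(\P(a;\H))}/(k-\omega(\P(a;\H)))!$ and of the product over $p\mid\P(a;\H)$ is asserted rather than proved, and no mechanism is offered for the interchange of the $m$, $(d_1,\dots,d_k)$ and $q$ summations or for keeping the errors below the claimed bias.

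The missing idea is the Hooley--Montgomery divisor switch, which runs your substitution in the opposite direction: writing $n=a+qr$ and re-sorting by the complementary divisor, the sum $\sum_{x/(2M)<q\leq x/M}\A^*(x;q,a)$ becomes a sum over \emph{small} moduli $r<(x-a)M/x\leq M\leq\LL(x)$, where the uniform Hardy--Littlewood hypothesis (via your shifted-tuple computation, now applied with the modulus small) legitimately yields the dyadic Hypothesis \ref{bombieri-vinogradov}*. The discrepancy then reduces to the purely arithmetic quantity $\sum_{r\leq 2M}\frac{\f_a(r)}{r\gamma(r)}\left(1-\frac r{2M}\right)-\sum_{r\leq M}\frac{\f_a(r)}{r\gamma(r)}\left(1-\frac rM\right)-\sum_{x/(2M)<q\leq x/M}\frac{\f_a(q)}{q\gamma(q)}$, and the bias is read off from the singularity at $s=-1$ of the Dirichlet series $Z_{\H}(s)=\sum_n\f_a(n)n^{-s-1}\gamma(n)^{-1}=\mathfrak S_2(s)\zeta(s+1)\zeta(s+2)^kZ_0(s)$: the coprimality condition $(r,\P(a;\H))=1$ forces a zero of order $\omega(\P(a;\H))$ of $\mathfrak S_2(s)$ at $s=-1$ against the pole of order $k$ of $\zeta(s+2)^k$, and the Hankel-contour computation (Lemma \ref{hankel}, applied with $\K=1-k$) produces exactly the main term $-\frac{(\log M)^{k-\omega(\P(a;\H))}}{2(k-\omega(\P(a;\H)))!}\prod_{p\mid\P(a;\H)}\frac{p-\nu_{\H}(p)}{p-1}\log p$ when $\omega(\P(a;\H))\leq k$, and boundedness (hence the $O(M^{-\delta_k})$) otherwise. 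As written, your argument cannot be repaired without either this switch or a uniformity assumption far stronger than the theorem allows.
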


 In the case of twin primes, we have $\mathcal H= \{n,n+2\}$, so $\P(a,\mathcal H) = a(a+2)$, and the function $\nu_{\mathcal H}$ is given by $\nu_{\mathcal H}(2)=1$ and $\nu_{\mathcal H}(p)=2$ for odd $p$.
We get that the average is
$$
 \begin{cases}
 \sim                  -\frac{ (\log M)^2}{4} & \text{ if } a=-1  \\
\sim			-\frac {\log 3}{4}  \log M   	& \text{ if } a=1,-3 \\
\sim-\frac{\log 2}2  \log M 	& \text{ if } a=2,-4 \\
	\sim			-\frac{\log p \log q}{2}\frac{p-\nu_{\mathcal H}(p)}{p-1}\frac{q-\nu_{\mathcal H}(q)}{q-1}	& \text{ if } a(a+2) = \pm p^e q^f \\
		O\left( \frac 1{M^{\delta_2}}\right) &\text{ if } \omega(a(a+2)) \geq 3.
                  \end{cases}
$$

\subsection{Integers free of small prime factors}

For $y=y(x)$ a function of $x$, define 
\begin{align*} \a_y(n):=\begin{cases}
                   1 &\text{ if } p\mid n \Rightarrow p\geq y\\
 0 & \text{ else,}
                  \end{cases}
\hspace{2cm} \A(x,y)&:= \sum_{n\leq x} \a_y(n), \\
\gamma_y(q):=\prod_{\substack{p\mid q \\ p<y}}\left(1-\frac 1p\right), \hspace{2.8cm} \A(x,y;q,a)&:= \sum_{\substack{n\leq x \\ n\equiv a\bmod q}} \a_y(n).
\end{align*}

\begin{theorem}
 \label{resultat nombres rough}
Fix $a\neq 0$, $\delta>0$ and $M=M(x)\leq (\log x)^{1-\delta}$. If
\begin{equation*}\nu_y(a,M) := \frac{1}{\frac {x}{2M} \frac{\phi(a)}{a}}\sum_{\substack{\frac x{2M} < q \leq \frac xM \\ (q,a)=1}} \left( \A(x,y;q,a)-\a_y(a)- \frac {\A(x,y)}{q\gamma_y(q)} \right),
\end{equation*}
then for $y\leq e^{(\log M)^{\frac 12-\delta}}$ with $y\rightarrow \infty$,
$$  \nu_y(a,M)=\begin{cases}
                -\frac 12+o(1) &\text{ if } a=\pm 1 \\
		o(1)& \text{ otherwise,}
               \end{cases}
$$
and for $(\log x)^{\log \log \log x} \leq y \leq \sqrt x,$
$$  \nu_y(a,M)=\frac{\A(x,y)}{x} \times \begin{cases}
                (1+o(1)) \log M &\text{ if } a=\pm 1 \\
\log p +o(1)&\text{ if } a=\pm p^k \\
		o(1) & \text{ otherwise.}
               \end{cases}
$$
(We have no result in the intermediate range.)
\end{theorem}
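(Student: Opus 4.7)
The plan is to open up $\A(x,y;q,a)$ via the parameterization $n = a + kq$ and swap the order of summation between $q$ and $k$. Assuming $a>0$ with $a \leq x$ for concreteness (the case $a<0$ is analogous, with $\a_y(a)=0$), the $k=0$ contribution equals $\a_y(a)$, and setting $q=(n-a)/k$ for $k \geq 1$ yields
\[
\sum_{\substack{q\in I\\(q,a)=1}}\bigl(\A(x,y;q,a)-\a_y(a)\bigr) = \sum_{k\ge 1} T_k,
\quad
T_k := \sum_{\substack{n\in J_k\cap[1,x]\\ k\mid n-a,\,((n-a)/k,a)=1}}\a_y(n),
\]
where $I := (x/(2M),x/M]$ and $J_k := (a + kx/(2M),\, a+kx/M]$. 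This reduces the problem to evaluating, for each $k \lesssim 2M$, the number of $y$-rough integers in a short interval sitting in the progression $a \bmod k$, and comparing the result with $\sum_{q\in I,(q,a)=1} \A(x,y)/(q\gamma_y(q))$, which can be handled by Mertens and partial summation.

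For the evaluation of $T_k$, I would split into the two regimes. When $y \leq \exp((\log M)^{1/2-\delta})$, the fundamental lemma of the sieve, applied via $\a_y(n)=\sum_{d\mid (n,P(y))}\mu(d)$ with $P(y) := \prod_{p<y} p$, shows that the density of $y$-rough integers in $J_k$ lying in the progression $a\bmod k$ is $(1/k)\prod_{p<y}(1-1/p)$ up to a small local correction at primes dividing $(a,k)$; the hypothesis on $y$ is exactly what is needed for the fundamental-lemma error (which is $\exp(-u\log u)$ with $u = \log(x/M)/\log y$) to fall below the targeted $-\tfrac12$ constant. When $(\log x)^{\log\log\log x} \leq y \leq \sqrt{x}$, the density is no longer essentially constant: Buchstab's identity, iterated, gives $\A(t,y)/t \sim \omega(\log t/\log y)/\log y$, so each $T_k$ is of size $\tfrac{x}{2Mk}\cdot \omega(\log(kx/M)/\log y)/\log y$ times a coprimality factor, and the hypothesis $y \geq (\log x)^{\log\log\log x}$ ensures $\log(Mk)/\log y = o(1)$, so $\omega$ varies slowly and the Buchstab errors are controllable. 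In both regimes the M\"obius expansion $\mathbf{1}_{((n-a)/k,a)=1} = \sum_{d\mid a,\, dk\mid n-a}\mu(d)$ extracts the explicit dependence on $a$, reducing each $T_k$ to a signed sum over divisors $d$ of $a$ of $y$-rough counts in progressions modulo $dk$.

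Subtracting the main term then yields the stated asymptotics. In the small-$y$ regime the densities match to leading order and the residual bias is a pure boundary/discretization effect of constant size, analogous to the mechanism responsible for the coefficient $-\tfrac12$ in Theorem \ref{resultat premiers}; the M\"obius sum restricts the surviving bias to $a=\pm 1$, giving $-\tfrac12+o(1)$. In the large-$y$ regime, the variation of $\omega(\log(kx/M)/\log y)$ across $k=1,\dots,\lesssim 2M$, summed against $\sum_k 1/k \sim \log M$, produces the factor $(\A(x,y)/x)\log M$ for $a=\pm 1$; the M\"obius expansion restricts the surviving contribution to values of $k$ whose prime divisors lie in a single prime-power class of $a$, so one recovers $\log p$ when $a=\pm p^e$, and the cancellations collapse the bias to $o(1)$ once $\omega(a)\geq 2$. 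The chief obstacle will be uniformity: one needs error estimates for the sieve (resp.\ Buchstab) analysis of $T_k$ that are uniformly small across all $k \leq 2M$, including the smallest $k$ where $J_k$ is relatively short. The two hypotheses on $y$ are calibrated precisely so that these errors fall below the respective main terms; the intermediate range $\exp((\log M)^{1/2}) < y < (\log x)^{\log\log\log x}$ lies beyond reach because neither the fundamental lemma nor the Buchstab analysis is sharp enough there, which is why the theorem makes no statement in that range.
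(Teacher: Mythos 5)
Your opening move --- writing $n=a+kq$ and swapping the order of summation --- is exactly the Hooley--Montgomery divisor switch that the paper uses in the proofs of Theorems \ref{theoreme principal} and \ref{theoreme principal}*, and your choice of tools for the two regimes (fundamental lemma for small $y$, Buchstab for large $y$) matches the inputs the paper uses to verify Hypothesis \ref{taille de A}. But the proposal stops exactly where the theorem begins. After the switch and after replacing each $T_k$ by its expected main term, one is left with the difference of three sums of the shape
$$\sum_{r\leq 2M}\frac{\f_a(r)}{r\gamma_y(r)}\Big(1-\frac{r}{2M}\Big)-\sum_{r\leq M}\frac{\f_a(r)}{r\gamma_y(r)}\Big(1-\frac{r}{M}\Big)-\sum_{x/2M<q\leq x/M}\frac{\f_a(q)}{q\gamma_y(q)},$$
which cancel to first order; every constant in the statement ($-\tfrac12$, $\log p$, the factor $\log M$, the vanishing for $\omega(a)\geq 2$) is the \emph{second-order} term of this combination, and extracting it requires knowing the partial sums of $\f_a(r)/(r\gamma_y(r))$ beyond their leading asymptotic. "Mertens and partial summation" and "a pure boundary/discretization effect of constant size" do not deliver this; the paper does it by analyzing $Z_{\A}(s)=\zeta(s+1)\prod_{p\mid a}(1-p^{-s-1})\prod_{p<y,\,p\nmid a}(1+\tfrac{1}{(p-1)p^{s+1}})$ near $s=-1$ and shifting the contour to $\Re s=-1+1/\log M$ (the analogue of Proposition \ref{proposition evaluation de la somme arithmetique}). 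The dichotomy in $y$ is precisely the dichotomy in the analytic behaviour of that Euler product at $s=-1$: for small $y$ it is bounded, $Z(s)$ has a simple pole, and the bias is $-\tfrac12\prod_{p<y}(1-1/p)^{-1}$ with the $-\tfrac12$ coming from $\zeta(0)$; for $y>2M$ it mimics $\zeta(s+2)$, $Z(s)$ acquires a double pole, and the bias is $\propto\log M$.

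Two of your heuristics also point at the wrong mechanism, which matters because they are doing the work your argument omits. In the large-$y$ regime the $\log M$ does not come from the variation of Buchstab's $\omega(u)$ across $k$ --- the paper shows that variation is $1+O(\log\LL/\log y)$ and contributes only to the error term; it comes from the fact that every prime factor of $k\leq 2M$ is below $y$, so $\gamma_y(k)=\phi(k)/k$ and the arithmetic sum is $\sum_{k}1/\phi(k)$, whose second-order behaviour is what produces $\log M$ (exactly as in the primes case, Theorem \ref{resultat premiers}). Likewise, the constraint $\log y\leq(\log M)^{1/2-\delta}$ is not calibrated against the fundamental-lemma error $u^{-u}$, which with $u\asymp\log x/\log y$ is astronomically small here; it is needed so that the error in the second-order evaluation of the arithmetic sum, which carries a factor of order $(\log y)^2\log\log M/\log M$ against a main bias of size $\asymp\log y$ before renormalization, is $o(1)$ after dividing by $x/(2M)\cdot\phi(a)/a$. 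Finally, the assertions that "the M\"obius sum restricts the surviving bias to $a=\pm1$" and that "the cancellations collapse the bias to $o(1)$ once $\omega(a)\geq 2$" are restatements of the conclusion: in the paper these follow from the order of vanishing of $\prod_{p\mid a}(1-p^{-s-1})$ at $s=-1$, which raises or kills the pole of $Z(s)$ there, and you would need to supply an equivalent computation.
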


\begin{rem}
For $x$ large enough, $\a_y(a)=0$ unless $a=\pm 1$.
\end{rem}

\section{Definitions and Hypotheses}

\subsection{Arithmetic sequences}\label{cadre}

The goal of this section is to give a framework to study arithmetic sequences. This discussion is modeled on that in \cite{uncertainty}.

We wish to study the sequence $\A=\{\a(n)\}_{n\geq 1}$ in arithmetic progressions, therefore one of our goals will be to prove the existence of a multiplicative function $\g_a(q)$ such that 
$$ \A(x;q,a)\sim \g_a(q) \A(x),$$
whenever $\g_a(q)\neq 0$. 
Let us give a heuristic way to do this with the help of an auxiliary multiplicative function $\h(d)$. 
First, denote by $\S$ a finite set of "bad primes", which are inherent to the sequence $\A$. We will assume that $\A$ is well distributed in the progressions $0\bmod d$, that is there exists a multiplicative function $\h(d)$ such that for $(d,\S)=1$,
\begin{equation*}
 \A_d(x) \approx \frac{\h(d)}{d} \A(x).
\end{equation*}
The fact that $\h(d)$ is multiplicative can be rephrased as "the events that $\a(n)$ is divisible by coprime integers are independent".
Let us also assume that
\begin{equation*}
 \A(x;q,a) \approx \frac 1{\phi(q/(q,a))} \sum_{\substack{n\leq x : \\ (q,n)=(q,a)}} \a(n), 
\end{equation*}
that is the sum is equally partitioned amongst the $\phi(q/(q,a))$ arithmetic progressions $b\bmod q$ with $(b,q)=(a,q)$. We then compute
\begin{multline*}
 \A(x;q,a) \approx \frac 1{\phi(q/(q,a))} \sum_{\substack{n\leq x \\ (q,n)=(q,a)}} \a(n) = \frac 1{\phi(q/(q,a))} \sum_{d\mid \frac q{(q,a)}} \mu(d) \A_{(q,a)d}(x) \\ \approx  \A(x)\frac 1{\phi(q/(q,a))} \sum_{d\mid \frac q{(q,a)}} \mu(d) \frac{\h((q,a)d)}{(q,a)d} = \g_a(q) \A(x),
\end{multline*}
where $$\g_a(q)=\g_{(a,q)}(q):=\frac 1{\phi(q/(q,a))} \sum_{d\mid \frac q{(q,a)}} \mu(d) \frac{\h((q,a)d)}{(q,a)d}$$ is a multiplicative function of $q$ which depends on $(q,a)$ (rather than depending on $a$). We have thus expressed the multiplicative function $\g_a(q)$ in terms of $\h(d)$. More explicitly, we have, when $p^f \parallel a$ (with $(pa,\S)=1$), that
\begin{equation}
\label{definition de g avec h}
\g_a(p^e)=\begin{cases}
            \frac{\h(p^e)}{p^e} & \text{ if } e\leq f \\
		\frac{1}{\phi(p^e)} \left( \h(p^f)-\frac{\h(p^{f+1})}{p} \right) & \text{ if } e > f.
           \end{cases}
\end{equation}
In particular, if $p\nmid a$, $$\g_a(p^e)= \frac 1{\phi(p^e)}\left(1-\frac{\h(p)}p\right).$$
Another way to write this is
\begin{equation}
\label{approx sur A(x;q,a)}
 \A(x;q,a) \approx \frac{\f_a(q)}{q\gamma(q)} \A(x),
\end{equation}
where $$\gamma(q):= \frac{\phi(q)}q \prod_{p\mid q}\left(1-\frac{\h(p)}p\right)^{-1} = \prod_{p\mid q}\frac{1-1/p}{1-\h(p)/p},$$
and $\f_a(q)$ is a multiplicative function defined by $\f_a(q):= \g_a(q)q\gamma(q)$. Note that for $(a,q)=1$, $\f_a(q)=1$.

\subsection{Hypotheses}

In the following, $\delta>0$ will denote a (small) fixed real number which will change from one statement to another. We will also fix an integer $a\neq 0$  with the property that $(a,\S)=1$, where $\S$ is a finite set of bad primes. The function $\LL: [0,\infty) \rightarrow [1,\infty)$ will be a given increasing smooth function such that $\LL(x) \rightarrow \infty$ as $x \rightarrow\infty$ (think of $\LL(x)$ as a power of $\log x$). We now assume the existence of a multiplicative function $\f_a(q)=\f_{(a,q)}(q)$, depending on $(a,q)$, and of $\gamma(q)\neq 0$, which is independent of $a$ (as in Section \ref{définitions}), such that for any fixed $a\neq 0$ and $q\geq 1$,
$$ \A(x;q,a) \sim \frac{\f_a(q)}{q\gamma(q)} \A(x)$$
whenever $\f_a(q)\neq 0$. To simplify the notation, we will also assume the existence of a multiplicative function $\h(d)$ such that \eqref{definition de g avec h} holds (for $(qa,\S)=1$). 

\label{hypothèses}

\begin{hypothesis}
\label{bombieri-vinogradov}
There exists a positive increasing function $\R(x)$ (think of $\R(x)$ as a small power of $x$), with $ \LL(x)^{1+\delta} \leq \R(x) \leq \sqrt x$, such that 

$$ \sum_{q\leq 2\R(x)} \max_{y\leq x} \left| \A(y;q,a)-\frac{\f_a(q)}{q\gamma(q)} \A(y) \right|\ll \frac{\A(x)}{ \LL(x)^{1+\delta}}.$$

\end{hypothesis}

We will see later that if we use dyadic intervals, we can replace Hypothesis \ref{bombieri-vinogradov} by a weaker hypothesis.
\begin{hyp}
We have
$$ \sum_{q\leq 2\LL(x)} \max_{y\leq x} \left| \A(y;q,a)-\frac{\f_a(q)}{q\gamma(q)} \A(y) \right|\ll \frac{\A(x)}{ \LL(x)^{1+\delta}}.$$

\end{hyp}

\begin{hypothesis}
\label{taille de A}
For any $z=z(x)$ in the range $ \frac 1{\LL(x)} \leq  z(x)  \leq 1+\frac{|a|}{x}$, we have
 \begin{equation*}
 \frac{\A(zx)}{\A(x)} = z +O\left(\frac 1 {\LL(x)^{1+\delta}}\right).
\end{equation*}
Moreover, for $n\leq x$, we have the following bound: 
$$\a(n) \ll \frac {\A(x)} {\LL(x)^{1+\delta}}.$$
\end{hypothesis}
The next hypothesis is somewhat more specific to our analysis than the ones above, and it will allow us to use the analytic theory of zeta functions.
\begin{hypothesis}
\label{hypothèse sur h}
There exists a real number $\K \geq 0$ such that the sum 
$$ \sum_{p\notin \S} \frac{\h(p)-\K}{p}$$
is convergent. More generally, for any real number $t$ and integer $n\geq 1$, we have 
$$\sum_{\substack{p\leq x \\ p\notin \S}} \frac{\h(p)-\K}{p^{1+it}}  \leq ( 1/2-\delta) \log (|t|+2)+O(1),$$
$$ \sum_{\substack{p\leq x \\ p\notin \S}} \frac{(\h(p)-\K)\log ^n p}{p^{1+it}} \ll_{n,\epsilon} (|t|+2)^{\epsilon}. $$

Finally, $\h(p)<p$ and for any $\epsilon>0$,
$$ \h (d)\ll_{\epsilon} d^{\epsilon}. $$
\end{hypothesis} 

The final hypothesis will be useful when studying the full interval $1\leq q \leq \frac xM$ rather than a dyadic one. It is not known for all the sequences we considered in Section \ref{section exemples}; for this reason we used dyadic intervals in theorems \ref{resultat dyadique poids 1}, \ref{resultat ktuplets} and \ref{resultat nombres rough}.

\begin{hypothesis}
\label{bombieri friedlander iwaniec}
With the same $\R(x)$ as in Hypothesis \ref{bombieri-vinogradov}, we have
$$\sum_{q\leq \frac x{\R(x)}} \left( \A^*(x;q,a)-\frac{\f_a(q)}{q\gamma(q)} \A(x) \right) \ll \frac {\A(x)} {\LL(x)^{1+\delta}},$$
where $\A^*(x;q,a)$ is defined as in \eqref{def de A*}.
\end{hypothesis}

\subsection{The formula for the average}
In this section we give a formula for the "average" $\mu_{\K} (a,M)$ which will appear in theorems \ref{theoreme principal} and \ref{theoreme principal}*. The formula is rather complicated in its general form, however in concrete examples it can be seen that it reflects the nature of the sequence $\A$. 

\label{définitions}
\begin{definition}
 $$\omega_{\h}(a):=\#\{p^f\parallel a \text{ with } f\geq 1 :\h(p^f)=\h(p^{f+1})/p \}.$$
\end{definition}

\begin{definition}
 \label{définition de mu_k}

Assume Hypothesis \ref{hypothèse sur h} and suppose that $\S=\emptyset$ (for simplicity). For an integer $a\neq 0$ and a real number $\K\geq 0$, we define

\begin{multline}
 \mu_{\K} (a,M):=  -\frac 12 \frac{(\log M)^{1-\K-\omega_{\h}(a)}}{\Gamma(2-\K -\omega_{\h}(a))} \prod_{\substack{p^f \parallel a  :\\ \h(p^f)=\frac{\h(p^{f+1})}p, \\ f\geq 0}} \frac{1+\h(p)+...+\h(p^{f})}{\left( 1-1/p\right)^{\K-1}} \log p \hspace{0.2cm}   \\
 \times\prod_{\substack{p^f \parallel a : \\ \h(p^f)\neq \frac{\h(p^{f+1})}p, \\ f\geq 0}} \frac{\h(p^f)-\h(p^{f+1})/p}{(1-1/p)^{\K}}. 
\label{def generale de mu}
\end{multline}

\end{definition}

\begin{rem}
 The first product on the right hand side of \eqref{def generale de mu} is a finite product, since $a$ is fixed and $\h(p)<p$ for all $p$. The second product is convergent, since for $p\nmid a$ we have 
$\h(p^f)-\h(p^{f+1})/p = 1-\h(p)/p \approx 1- \K/p$. Of course both these statements rely on the assumption of Hypothesis \ref{hypothèse sur h}.
\end{rem}

\begin{rem}
One sees that for integer values of $\K$, $\mu_{\K} (a,M)=0$ iff $\omega_{\h}(a) \geq 2-\K$, by the location of the poles of $\Gamma(s)$. Moreover, since these are the only poles, we have $\mu_{\K}(a,M)\neq 0$ whenever $\K \notin \mathbb Z$.
\end{rem}

\begin{rem}
 If $\S \neq \emptyset$, we can still give a formula for $\mu_{\K} (a,M)$, assuming we understand well $\g_a(p^e)$ with $p\in \S$. However, this would complicate the already lengthy definition of $\mu_{\K} (a,M)$, so we only give individual descriptions in the examples. 
\end{rem}

\section{Main result}
The main result of the paper is a formula for the average value of the discrepancy $\A(x;q,a)-\frac{\f_a(q)}{q\gamma(q)} \A(x)$, summed over $1\leq q\leq Q$, with $Q$ large enough in terms of $x$.
\begin{theorem}
\label{theoreme principal}
Assume that hypotheses \ref{bombieri-vinogradov}, \ref{taille de A}, \ref{hypothèse sur h} and \ref{bombieri friedlander iwaniec} hold with $\S=\emptyset$ (for simplicity) and the function $\LL(x)$. Fix an integer $a\neq 0$ and let $M=M(x)$ be a function of $x$ such that $1\leq M(x)\leq \LL(x)$. We have

\begin{equation}
\sum_{q\leq \frac xM} \left( \A(x;q,a) - \a(a)-\frac{\f_a(q)}{q\gamma(q)} \A(x) \right)  = \frac{\A(x)}{M} \left(\mu_{\K}(a,M)(1+o(1)) +O_A \left( \frac 1 {\log^ A M}\right) \right),
\label{equation thm principal}
\end{equation}
where $\a(a)$ is the first term of $\A(x;q,a)$ for positive $a$, and whenever $a$ is negative, we set $\a(a)=0$.
\end{theorem}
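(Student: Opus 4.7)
The starting move is to interchange the order of summation. For $a>0$ (the case $a<0$ is symmetric since $\a(a)=0$), write each $n\equiv a\bmod q$ with $1\le n\le x$ as $n=a+kq$, $k\in\mathbb Z$. The $k<0$ contribution survives only when $q<a$, hence contributes $O_a(1)$ in total over $q\le x/M$; the $k=0$ term is exactly $\a(a)$, which is removed in the statement. Thus
$$\sum_{q\le x/M}\bigl(\A(x;q,a)-\a(a)\bigr)=\sum_{k\ge 1}\bigl(\A(y_k;k,a)-\a(a)\bigr)+O_a(1),$$
where $y_k:=\min(x,\,a+k\lfloor x/M\rfloor)$; concretely $y_k=a+kx/M+O(1)$ for $k\le M$ and $y_k=x$ for $k>M$, and the sum is effectively over $1\le k\le x-a$.

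Next I would split the $k$-range into three pieces and apply in each the appropriate distribution hypothesis: (i) on $1\le k\le 2\R(x)$, Hypothesis \ref{bombieri-vinogradov} allows the replacement $\A(y_k;k,a)\mapsto \f_a(k)\A(y_k)/(k\gamma(k))$ with total error $\ll \A(x)/\LL(x)^{1+\delta}$; (ii) on $2\R(x)<k\le x/\R(x)$, where $y_k=x$, Hypothesis \ref{bombieri friedlander iwaniec} (whose left-hand side is exactly the restriction of the sum we wish to estimate) supplies the same saving directly on the discrepancy; (iii) for $k>x/\R(x)$, only $O(\R(x))$ values of $q$ contribute per $k$, and the pointwise bound $\a(n)\ll \A(x)/\LL(x)^{1+\delta}$ from Hypothesis \ref{taille de A} makes this range negligible. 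Using $\A(y_k)/\A(x)=y_k/x+O(\LL(x)^{-1-\delta})$ (again Hypothesis \ref{taille de A}) and subtracting the ``expected'' term $\A(x)\sum_{q\le x/M}\f_a(q)/(q\gamma(q))$, the problem reduces to evaluating
$$\A(x)\left[\frac{1}{M}\sum_{k\le M}\frac{\f_a(k)}{\gamma(k)}-\sum_{k\le M}\frac{\f_a(k)}{k\gamma(k)}+\sum_{x/M<k\le x-a}\frac{\f_a(k)}{k\gamma(k)}\right]$$
up to an acceptable error.

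The final step is an analytic evaluation via the Dirichlet series $D(s):=\sum_{k\ge 1}\f_a(k)/(\gamma(k) k^s)$. Because $\gamma(q)=\prod_{p\mid q}(1-1/p)(1-\h(p)/p)^{-1}$, one reads from \eqref{definition de g avec h} the Euler factor of $D(s)$ at each prime: at $p\nmid a$ it is $1+\h(p)/p^s+O(p^{-2\Re s})$, whereas at $p^f\parallel a$ with $f\ge 1$ the local factor is explicit and takes one of two forms depending on whether $\h(p^f)=\h(p^{f+1})/p$ (the ``degenerate'' case, producing an additional $\log p$ factor associated with an extra pole at $s=1$) or not (the ``generic'' case, producing a factor $\h(p^f)-\h(p^{f+1})/p$). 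By the hypothesis on $\h$, one obtains a factorisation $D(s)=\zeta(s)^{\K+\omega_{\h}(a)}G_a(s)$ with $G_a$ holomorphic and nonzero in a neighbourhood of $\Re s=1$, the local factors at $p\mid a$ matching exactly those appearing in the definition of $\mu_{\K}(a,M)$. Applying Perron's formula to each of the three partial sums and shifting contours past $s=1$ using the quantitative bounds of the hypothesis on $\h$, the residue at $s=1$ produces the standard Mellin shape $(\log M)^{1-\K-\omega_{\h}(a)}/\Gamma(2-\K-\omega_{\h}(a))$ multiplied by the prescribed product of Euler factors; the three contributions combine to $(\A(x)/M)\cdot\mu_{\K}(a,M)(1+o(1))+O_A(\A(x)/(M\log^A M))$, as asserted.

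\textbf{Main obstacle.} The delicate step is the analytic evaluation: one must identify the exact Euler factorisation of $D(s)$ at primes $p\mid a$, distinguish the two regimes appearing in the definition of $\mu_{\K}$, and then perform the contour shift using only the growth bounds of the hypothesis on $\h$. A secondary technical issue is the boundary $k\approx M$ in the swap step, where the two pieces of $y_k$ meet up to an $O(1)$ boundary; the resulting lower-order corrections must be shown not to interfere with the $(1+o(1))$ factor in the stated asymptotic.
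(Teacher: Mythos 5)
Your opening move --- a single global divisor switch $n=a+kq$ turning $\sum_{q\le x/M}\bigl(\A(x;q,a)-\a(a)\bigr)$ into $\sum_{k\ge1}\A^*(y_k;k,a)$ with $y_k=\min(x,a+kx/M)$ --- is sound, and your ranges (i) and (ii) are handled correctly (Hypothesis \ref{bombieri-vinogradov} for $k\le 2\R$, and Hypothesis \ref{bombieri friedlander iwaniec} minus the $k\le 2\R$ part for $2\R<k\le x/\R$). The gap is range (iii), $k>x/\R(x)$. Your reduced bracket replaces $\A^*(x;k,a)$ by $\f_a(k)\A(x)/(k\gamma(k))$ on this range too (your third sum runs all the way to $x-a$), but no hypothesis controls individual moduli beyond $x/\R\ge\sqrt{x}$, and the range is far from negligible. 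Writing $G(y):=\sum_{k\le y}\f_a(k)/(k\gamma(k))=\tilde C_1\log y+\tilde C_2+O(y^{-1+\epsilon})$ as in \eqref{bonne borne dans partie negligeable}, both $\sum_{x/\R<k\le x-a}\A^*(x;k,a)$ and $\A(x)\sum_{x/\R<k\le x-a}\f_a(k)/(k\gamma(k))$ are of size $\asymp\A(x)\log\R$, and their difference comes out to $(\tilde C_2-\tilde C_1)\A(x)+o(\A(x))$ --- a nonzero constant times $\A(x)$ in general, vastly larger than the target $\A(x)\mu_{\K}(a,M)/M$. (Equivalently: your bracket and the left-hand side of Proposition \ref{proposition evaluation de la somme arithmetique} differ by $\tilde C_1-\tilde C_2$ plus negligible terms; for primes with $a=1$ this is $A(1-B)\neq0$ from Landau's $\sum_{n\le y}1/\phi(n)=A(\log y+B)+o(1)$.) Your proposed justification for discarding range (iii) --- $O(x/k)\le O(\R)$ integers $n$ per modulus times the pointwise bound $\a(n)\ll\A(x)/\LL(x)^{1+\delta}$ of Hypothesis \ref{taille de A} --- gives $\ll x\A(x)\log\R/\LL(x)^{1+\delta}$ after summing over $k$, which is enormous rather than small.

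The repair is a \emph{second} divisor switch on precisely this range: $\sum_{x/\R<k\le x-a}\A^*(x;k,a)=\sum_{r<(x-a)\R/x}\bigl(\A(x;r,a)-\A(a+rx/\R;r,a)\bigr)$, which returns the moduli to $r\le\R$ where Hypothesis \ref{bombieri-vinogradov} applies and produces the term $\A(x)\sum_{r\le\R}\f_a(r)(1-r/\R)/(r\gamma(r))$. This is exactly why the paper cuts at $x/\R$ and switches $S_1=\sum_{x/\R<q\le x}\A^*$ and $S_2=\sum_{x/M<q\le x}\A^*$ separately; once you do this, your reduction becomes Proposition \ref{proposition evaluation de la somme arithmetique} and the spurious constant disappears. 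A secondary point: the factorisation $D(s)=\zeta(s)^{\K+\omega_{\h}(a)}G_a(s)$ you assert is not correct. In the paper's normalisation the Dirichlet series is $\mathfrak S_2(s)\zeta(s+1)\zeta(s+2)^{1-\K}Z_5(s)$: it has a simple pole at $s=0$ (whose contributions cancel among the three partial sums) and a singularity of type $(s+1)^{\K+\omega_{\h}(a)-2}$ at $s=-1$, and it is the latter --- treated by a Hankel contour when $\K\notin\mathbb Z$ --- that yields $(\log M)^{1-\K-\omega_{\h}(a)}/\Gamma(2-\K-\omega_{\h}(a))$, not a residue at the pole of $\zeta$.
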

We also give a dyadic version, which assumes a weaker form of Hypothesis \ref{bombieri-vinogradov}, and does not assume Hypothesis \ref{bombieri friedlander iwaniec} at all.

\begin{thm}
Assume that hypotheses \ref{bombieri-vinogradov}*, \ref{taille de A} and \ref{hypothèse sur h} hold with $\S=\emptyset$ (for simplicity) and the function $\LL(x)$. Fix an integer $a\neq 0$ and let $M=M(x)$ be a function of $x$ such that $1\leq M(x)\leq \LL(x)$. We have
\begin{equation}
\sum_{ \frac{x}{2M} < q\leq \frac xM} \left( \A(x;q,a) - \a(a)-\frac{\f_a(q)}{q\gamma(q)} \A(x) \right)  = \frac{\A(x)}{2M} \left(\mu_{\K}(a,M)(1+o(1)) +O_A \left( \frac 1 {\log^ A M}\right) \right).
\label{equation thm principal dyadique}
\end{equation}
\end{thm}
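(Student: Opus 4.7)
The plan is to dualize the sum by writing each $n\equiv a\pmod q$ with $n>a$ in the form $n=a+mq$ and swapping the order of summation, so that the inner sum becomes a sum of $\a$ over an arithmetic progression of \emph{small} modulus $m\le 2M\le 2\LL(x)$, to which Hypothesis~\ref{bombieri-vinogradov}* can be applied. Concretely, I would start from
$$
\A(x;q,a)-\a(a)=\sum_{\substack{m\ge 1\\ a+mq\le x}}\a(a+mq),
$$
interchange the sums over $q\in(x/(2M),x/M]$ and $m$, and observe that for each $m\in[1,2M]$ the inner sum equals $\sum_{n\in J_m,\ n\equiv a\,(\bmod m)}\a(n)$, where $J_m=(a+mx/(2M),\min(x,a+mx/M)]$ is a short interval. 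Hypothesis~\ref{bombieri-vinogradov}* then replaces this by $(\f_a(m)/(m\gamma(m)))\bigl(\A(y_2(m))-\A(y_1(m))\bigr)$ with aggregated error $\ll \A(x)/\LL(x)^{1+\delta}$, and Hypothesis~\ref{taille de A} substitutes $\A(y_2)-\A(y_1)$ by $(y_2-y_1)\A(x)/x$ with a further admissible error.

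After these reductions the left-hand side of~\eqref{equation thm principal dyadique} becomes
$$
\frac{\A(x)}{2M}\sum_{m\ge 1}\frac{\f_a(m)}{\gamma(m)}\,w\!\left(\frac{m}{M}\right) \,-\, \A(x)\sum_{x/(2M)<q\le x/M}\frac{\f_a(q)}{q\gamma(q)} \,+\, \text{admissible error},
$$
where $w$ is an explicit compactly supported weight equal to $1$ on $[0,1]$ and decreasing linearly to $0$ on $[1,2]$ (encoding the truncation of $J_m$ at $x$ when $m>M$). The remaining task is to evaluate the difference of these two partial sums of the multiplicative function $q\mapsto\f_a(q)/(q\gamma(q))$, taken at the two different scales $m\asymp M$ and $q\asymp x/M$. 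From~\eqref{definition de g avec h} this function factors as an Euler product whose local factor at $p\nmid a$ is a simple function of $\h(p)$, while its local factor at $p^f\parallel a$ depends on whether $\h(p^f)=\h(p^{f+1})/p$ or not---and the number of primes of the former type is precisely $\omega_\h(a)$.

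The hypothesis on~$\h$ (its parameter~$\K$ together with the growth estimates along vertical lines) then forces the associated Dirichlet series $D(s):=\sum_{m\ge 1}\f_a(m)/(\gamma(m)m^s)$ to have a singularity of order $\K+\omega_\h(a)$ at $s=1$ and to admit an analytic continuation with polynomial growth into a region of the form $\Re(s)\ge 1-c/\log(|t|+2)$. A Perron-type formula combined with a contour shift to the left of $\Re(s)=1$ then extracts the residue at $s=1$, which produces exactly the factor $(\log M)^{1-\K-\omega_\h(a)}/\Gamma(2-\K-\omega_\h(a))$ multiplied by the two products over primes dividing $a$ in~\eqref{def generale de mu}; the overall sign $-\tfrac12$ arises from the Mellin transform of the difference of the two dyadic weights. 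The main obstacle will be this analytic step: pushing the contour far enough left to beat $\log^{-A}M$ requires the full strength of the vertical-line growth estimates on~$\h$, and the bookkeeping of local factors at primes $p\mid a$ must be carried out carefully in order to match both products in~\eqref{def generale de mu}---in particular the dichotomy between primes contributing to $\omega_\h(a)$ (giving the $\log p$ factor and the $(1+\h(p)+\cdots+\h(p^f))$ numerator) and those not (giving the $\h(p^f)-\h(p^{f+1})/p$ factor).
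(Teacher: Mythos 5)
Your reduction is essentially the paper's: the substitution $n=a+mq$ with the roles of modulus and quotient exchanged is exactly the Hooley--Montgomery divisor switch used in the proof (your $m$ is the paper's $r$), Hypothesis \ref{bombieri-vinogradov}* and Hypothesis \ref{taille de A} are applied just as you say, and the problem does reduce to evaluating weighted partial sums of $q\mapsto \f_a(q)/(q\gamma(q))$ at the scales $M,2M$ and $x/M,x/(2M)$ through its Dirichlet series. (Minor bookkeeping: relative to $\f_a(m)/(m\gamma(m))$ the combined weight is $m/(2M)$ for $m\le M$ and $1-m/(2M)$ for $M<m\le 2M$, i.e.\ the difference of the two triangular weights $1-r/(2M)$ and $1-r/M$; your $w$ is not linear on $[1,2]$, but this does not affect the structure.)

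The genuine gap is in the last analytic step: you have misplaced the singularity that produces $\mu_{\K}(a,M)$. Your $D(s)=\sum_m \f_a(m)/(\gamma(m)m^s)$ is $Z_{\A}(s-1)$ where $Z_{\A}(s)=\sum_n \g_a(n)n^{-s}=\mathfrak S_2(s)\zeta(s+1)\zeta(s+2)^{1-\K}Z_5(s)$. At $s=1$ the series $D(s)$ has only a \emph{simple pole}, coming from $\zeta$, not a singularity of order $\K+\omega_{\h}(a)$; and the contribution of that pole is precisely the part that cancels between the small-scale and large-scale sums (in the paper this is the double zero of $\psi(s)$ at $s=0$, which makes $Z_{\A}(s)\psi(s)/s$ regular there). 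The term $\mu_{\K}(a,M)/M$ is a second-order term coming from the branch point one full unit further left --- $s=0$ in your normalization, $s=-1$ in the paper's --- where $\zeta(s+2)^{1-\K}$ and the zero of $\mathfrak S_2$ of order $\omega_{\h}(a)$ combine so that $Z_{\A}(s)/(s(s+1))$ behaves like $(s+1)^{\K+\omega_{\h}(a)-2}$. Reaching it requires shifting the contour across the entire critical strip of $\zeta(s+1)$ down to $\Re s=-1+1/\log M$, which is where the convexity bounds of Lemma \ref{bornes sur zeta} and the quantitative $(1/2-\delta)\log(|t|+2)$ bound in the hypothesis on $\h$ are consumed; a continuation into a zero-free-region-shaped domain $\Re s\ge 1-c/\log(|t|+2)$ does not get anywhere near this point. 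Moreover, what one extracts there is not a residue but a Hankel-contour integral (Lemma \ref{hankel}), which is the source of the $1/\Gamma(2-\K-\omega_{\h}(a))$ --- including its vanishing when $2-\K-\omega_{\h}(a)$ is a nonpositive integer, and its meaning for non-integer $\K$. As written, your contour shift would recover only the cancelling $s=1$ contribution and would miss $\mu_{\K}(a,M)$ entirely; the fix is to relocate the singularity, strengthen the shift accordingly, and replace the residue computation by the Hankel analysis of Propositions \ref{k entier} and \ref{k non entier}.
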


\begin{remark}
As we have seen in the examples of Section \ref{section exemples}, theorems \ref{theoreme principal} and \ref{theoreme principal}* easily generalize to arbitrary (given) sets $\S\neq \emptyset$, as long as we understand $\g_a(p^e)$ for each $p\in \S$.
\end{remark}

\begin{remark}
 If $\mu_{\K}(a,M)\neq 0$, then theorems \ref{theoreme principal} and \ref{theoreme principal}* give asymptotics for the sum on the left hand side.
\end{remark}

\begin{remark}

\label{remarque k=0}
Suppose that $\K=0$ (e.g. when $\A$ is the sequence of primes).

If $\omega_{\h}(a)\geq 2$, then $\mu_0(a,M)=0$.

If $\omega_{\h}(a) =1$, so there is a unique $p_0^{f_0}\parallel a$, $f_0\geq 1$, such that $\h(p_0^{f_0})=\h(p_0^{f_0+1})/p_0 $, then
 $$ \mu_0(a,M)=-\frac12 \left(1-\frac 1{p_0} \right) (1+\h(p_0)+...+\h(p_0^{f_0})) \log p_0 \prod_{\substack{p^f \parallel a \\ f\geq 0 \\ p\neq p_0}} \left(\h(p^f)-\h(p^{f+1})/p\right). $$

If $\omega_{\h}(a)=0$, then

\begin{equation*}
 \mu_0(a,M)=-\frac {\log M}{2}\prod_{\substack{p^f\parallel a \\ f\geq 0}} (\h(p^f)-\h(p^{f+1})/p) .
\end{equation*}
\end{remark}

\begin{remark}
\label{remarque k=1}
Suppose that $\K=1$ (e.g. when $\A$ is the sequence of integers which can be written as the sum of two squares, counted with multiplicity). Then
$$\mu_1(a,M)=-\frac 1{2}\prod_{\substack{p^f \parallel a\\ f\geq 0}} \frac{\h(p^f)-\h(p^{f+1})/p}{1-1/p} .$$
\end{remark}

\begin{remark}
\label{remarque k>=2}
Suppose that $\K$ is an integer $\geq 2$ (e.g. when $\A$ is the sequence of integers of the form $(m+c_1)(m+c_2)\cdots(m+c_{\K})$, where the $c_i$ are distinct integers). Then $\mu_1(a,M)=0.$
\end{remark}

\section{Proof of the main result}
The goal of this section is to prove theorems \ref{theoreme principal} and \ref{theoreme principal}*.

\subsection{An estimate for the main sum}

In this section, we will assume that $\S=\emptyset$ for simplicity. Again, the results easily generalize to $\S \neq \emptyset$.

\begin{proposition}
 \label{proposition evaluation de la somme arithmetique}
Assume Hypothesis \ref{hypothèse sur h}. Let $M=M(x)$ and $\R=\R(x)$ be two positive functions of $x$ such that $M(x)^{1+\delta} \leq \R(x) \leq \sqrt x$ for a fixed $\delta>0$. We have
\begin{multline*}
 \sum_{1\leq r \leq  \R}\frac{\f_a(r)}{r\gamma(r)}\left( 1-\frac{r}{\R} \right)-\sum_{1\leq r \leq  M}\frac{\f_a(r)}{r\gamma(r)}\left(1- \frac{r}{M} \right) - \sum_{\frac x{\R}< q \leq  \frac x M}\frac{\f_a(q)}{q\gamma(q)}  \\
= \frac{\mu_k(a,M)}{M}\left( 1+O\left( \frac{\log\log M}{\log M}\right) \right)
+ O_A\left( \frac 1 {M\log^ A M}\right).
\end{multline*}
\end{proposition}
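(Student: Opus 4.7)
The plan is to express the three sums through Mellin-type formulas in terms of the Dirichlet series
$$G_a(s) := \sum_{r \geq 1} \frac{\f_a(r)}{r \gamma(r)}\, r^{-s}.$$
The smoothed partial sums have the clean representation
$$\sum_{r \leq X} \frac{\f_a(r)}{r\gamma(r)}\left(1 - \frac{r}{X}\right) = \frac{1}{2\pi i}\int_{(c)} G_a(s)\, \frac{X^s}{s(s+1)}\, ds$$
for $c > 0$ in the half-plane of absolute convergence, while for the sharp sum $T := \sum_{x/\R < q \leq x/M} \f_a(q)/(q\gamma(q))$ I would apply a truncated Perron formula. Combining these yields a single contour integral $\frac{1}{2\pi i}\int_{(c)} G_a(s)\, K(s)\, ds$ plus a Perron truncation error, where
$$K(s) := \frac{\R^s - M^s}{s(s+1)} - \frac{(x/M)^s - (x/\R)^s}{s}.$$

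The next step is to analyze $G_a(s)$ analytically. Using \eqref{definition de g avec h} to compute local factors, together with the cancellation in Hypothesis 3.3, I would establish the factorization
$$G_a(s) = \zeta(1+s)\, \zeta(2+s)^{1-\K}\, \tilde L_a(s),$$
where $\tilde L_a(s)$ is an Euler product absolutely convergent (and thus analytic) in some half-plane $\Re(s) > -1 - \delta'$. A direct computation of the local factors at $s = -1$ shows that, for each $p^f \parallel a$ (including $f = 0$), $\tilde L_{a,p}(-1) = (\h(p^f) - \h(p^{f+1})/p)/(1 - 1/p)^{\K}$ when non-zero, and vanishes to first order otherwise, with first non-vanishing Taylor coefficient $(1 + \h(p) + \cdots + \h(p^f))(1-1/p)^{1-\K}\log p$. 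Taking the product over all primes reproduces exactly the formula in Definition 3.6; in particular $\tilde L_a$ vanishes at $s = -1$ to order $\omega_{\h}(a)$.

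The core of the argument is then to shift the contour from $\Re(s) = c$ down to $\Re(s) = -1 - \delta'$ and apply the residue theorem. Two candidate singularities lie in the strip, at $s = 0$ and at $s = -1$. A brief Taylor expansion shows $K(0) = 0$: both fractions in $K$ evaluate to $\log(\R/M)$ at $s = 0$, so they cancel, and combined with the simple pole of $G_a$ at $s = 0$ this gives a zero residue, encoding the cancellation of the leading terms among the three sums. At $s = -1$, the kernel $K(s)$ has a simple pole of residue $M^{-1} - \R^{-1}$ coming purely from the $(\R^s - M^s)/(s+1)$ piece of the first fraction, while $G_a$ contributes a pole or zero of order $|1 - \K - \omega_{\h}(a)|$ from the combined $\zeta(2+s)^{1-\K}$ and $\tilde L_a$ factors. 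Expanding everything in Laurent/Taylor series around $s = -1$ and collecting the coefficient of $(s+1)^{-1}$ in $G_a(s) K(s)$, the dominant contribution comes from the $n$-th derivative of $\R^s - M^s$ falling on the leading coefficient of $G_a$, producing $(\log M)^n/\Gamma(n+1) \cdot M^{-1}$ with $n = 1 - \K - \omega_{\h}(a)$, and giving $\mu_{\K}(a,M)/M$ after multiplying by the leading Taylor coefficient of $\tilde L_a$ and the $-1/2 = \zeta(0)$ factor; the analogous $\R^{-1}(\log \R)^n$ contributions are negligible since $\R \gg M$.

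Finally, I would bound the integral along the shifted line $\Re(s) = -1 - \delta'$ using the polynomial growth of $G_a(s)$ on vertical lines (a consequence of the $t$-uniform estimates in Hypothesis 3.3) together with $|K(s)| \ll M^{-1-\delta'}$, yielding the $O_A(1/(M\log^A M))$ error. I expect the main obstacles to be (i) carrying out the Laurent expansion at $s = -1$ with sufficient precision to isolate the exact coefficient $\mu_{\K}(a,M)$, particularly tracking which sub-leading contributions from $\zeta(2+s)^{1-\K}$ and $\tilde L_a$ fit into the $O(\log\log M/\log M)$ relative error, and (ii) controlling the Perron truncation error by using the upper bound $\a(n) \ll \A(x)/\LL(x)^{1+\delta}$ from Hypothesis 3.2 to handle boundary terms near $q = x/M$ and $q = x/\R$.
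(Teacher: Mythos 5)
Your overall architecture (Mellin representation with the kernel $K(s)=\psi(s)/s$, factorization of the Dirichlet series through $\zeta(s+1)\zeta(s+2)^{1-\K}$, cancellation at $s=0$ from the double zero of $\psi$, and extraction of the main term from the singularity at $s=-1$ with the $\zeta(0)=-1/2$ factor) is the same as the paper's. But the central step of your plan --- shifting the contour to $\Re(s)=-1-\delta'$ and applying the residue theorem at $s=-1$ --- has a genuine gap. First, Hypothesis \ref{hypoth\`ese sur h} only controls $\sum_p(\h(p)-\K)p^{-1-it}$; it does \emph{not} give absolute convergence of the Euler product $\tilde L_a(s)$ (i.e.\ of $Z_5(s)$) in any half-plane $\Re(s)>-1-\delta'$, and the paper only establishes holomorphy of $Z_5$ for $\Re(s)>-1$. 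Second, even granting continuation, for non-integer $\K$ the factor $\zeta(s+2)^{1-\K}$ has a \emph{branch point} at $s=-1$, not a pole, so there is no Laurent expansion and no residue to compute there. The paper's proof never crosses the line $\Re(s)=-1$: it stops at $\Re(s)=-1+\tfrac1{\log M}$, subtracts the explicit singular model $c(s+1)^{\K+\omega_{\h}(a)-2}$, evaluates its contribution exactly via a Hankel-contour identity (Lemma \ref{hankel}, giving the $(\log M)^{1-\K-\omega_{\h}(a)}/\Gamma(2-\K-\omega_{\h}(a))$ factor), and bounds the remainder by repeated integration by parts together with the uniform derivative bounds of Lemma \ref{borne sur d\'eriv\'ees de Z} and the regularity estimate $Z_5(s)=Z_5(-1)+O(|s+1|)$ of Lemma \ref{r\'egularit\'e de Z_5 en -1}. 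This machinery is not an optional refinement of your ``Laurent expansion'' step; it is the substance of the proof, and is also what produces the stated error terms $O(\log\log M/\log M)$ and $O_A(1/\log^A M)$.

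Two smaller points. The sharp sum over $x/\R<q\le x/M$ is handled in the paper not by a truncated Perron formula with an error you control via Hypothesis \ref{taille de A} --- that hypothesis bounds $\a(n)$, not the coefficients $\f_a(q)/(q\gamma(q))$, so it is irrelevant here --- but by Lemma \ref{partie n\'egligeable}, whose improvement of the bound from $y^{-1/2+\epsilon}$ to $y^{-1+\epsilon}$ requires a separate elementary evaluation of $\sum_{n\le y}\f_a(n)/(n\gamma(n))$; the $y^{-1/2+\epsilon}$ bound alone would not suffice given only $\R\le\sqrt x$. Finally, the claimed polynomial growth of $G_a$ on vertical lines to the left of $\Re(s)=-1$ is again unavailable; the paper's bounds (Lemmas \ref{bornes sur Z_5} and \ref{borne sur d\'eriv\'ees de Z}) are proved only for $-1<\sigma<-1/2$.
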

The proof of Proposition \ref{proposition evaluation de la somme arithmetique} will require several lemmas.

\begin{lemma}
 \label{borne sur g}
With $\f_a(n)$ and $\gamma(n)$ defined as in Section \ref{cadre}, we have $$\frac{\f_a(n)}{n\gamma(n)} \ll \frac 1 {\phi(n)}.$$
\end{lemma}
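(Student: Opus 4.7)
The plan is to reduce the bound to a pointwise estimate on $\g_a(n)$ and then verify it factor by factor using the explicit formula \eqref{definition de g avec h} together with the size bounds in Hypothesis \ref{hypoth\`ese sur h}.

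First I would unwind the definitions. Since $\f_a(q) = \g_a(q)\,q\gamma(q)$ by construction, the claim
$$\frac{\f_a(n)}{n\gamma(n)} \ll \frac{1}{\phi(n)}$$
is equivalent to $\g_a(n) \ll 1/\phi(n)$, with an implied constant allowed to depend on the fixed integer $a$. Because $\g_a$ is multiplicative and $\phi$ is multiplicative, it suffices to verify a local bound $\g_a(p^e) \ll C_p/\phi(p^e)$ at each prime power, where $C_p = 1$ for all but finitely many $p$ (namely, the primes dividing $a$), and then multiply.

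The main step is to split the primes dividing $n$ into those coprime to $a$ and those dividing $a$. If $p \nmid a$, then $f = 0 < e$ in \eqref{definition de g avec h}, so
$$\g_a(p^e) = \frac{1 - \h(p)/p}{\phi(p^e)},$$
and Hypothesis \ref{hypoth\`ese sur h} (which gives $\h(p) \ll p^\epsilon$, hence $|1-\h(p)/p| \ll 1$ uniformly in $p$) yields $\g_a(p^e) \ll 1/\phi(p^e)$ with an absolute implied constant. If instead $p \mid a$ with $p^{f}\parallel a$ and $p^{e}\parallel n$, there are two subcases. When $e \le f$, the formula gives $\g_a(p^e) = \h(p^e)/p^e$, and since $p^e \le p^f \le |a|$ and $\h(p^e)\ll_\epsilon p^{e\epsilon}$, this is $\ll_a 1 \ll_a 1/\phi(p^e)$. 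When $e > f$, we have
$$\g_a(p^e) = \frac{1}{\phi(p^e)}\Bigl(\h(p^f) - \frac{\h(p^{f+1})}{p}\Bigr) \ll_a \frac{1}{\phi(p^e)},$$
since again $\h(p^f), \h(p^{f+1})/p \ll_a 1$ for the finitely many primes $p\mid a$.

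Finally, I would assemble these local bounds by taking the product over all prime powers $p^e\parallel n$. The finitely many factors coming from primes dividing $a$ contribute a constant $C_a$ depending only on $a$, while the remaining factors contribute exactly $\prod_{p\nmid a}1/\phi(p^{e})$, so multiplicativity of $\phi$ delivers $\g_a(n)\ll_a 1/\phi(n)$. I do not anticipate a genuine obstacle here: the proof is a routine verification, and the only point that requires any care is recording that the implied constant depends on $a$, which is harmless since $a$ is fixed throughout the paper.
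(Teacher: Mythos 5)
Your proposal is correct and follows essentially the same route as the paper: factor $\g_a(n)=\f_a(n)/(n\gamma(n))$ over prime powers, absorb the finitely many local factors at primes dividing $a$ into a constant depending on $a$, and bound the remaining factors by $\frac{1}{\phi(p^e)}\left(1-\frac{\h(p)}{p}\right)\le \frac{1}{\phi(p^e)}$. The paper gets the last inequality directly from $0\le\h(p)<p$ rather than from $\h(p)\ll_\epsilon p^\epsilon$, but this is an immaterial difference.
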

\begin{proof}
 
By definition, 

\begin{align*}
\frac{\f_a(n)}{n\gamma(n)}=\g_a(n) &= \prod_{p^e \parallel n} \g_a(p^e) \ll_{a,\S} \prod_{\substack{p^e \parallel n \\ p\nmid a, p\notin \S}} \g_a(p^e) \\
&= \prod_{\substack{p^e \parallel n \\ p\nmid a , p\notin \S}} \frac{1}{\phi(p^e)} \left( 1-\frac{\h(p)}p\right)  \\
&\leq \prod_{\substack{p^e \parallel n \\ p\nmid a, p\notin \S}} \frac{1}{\phi(p^e)} \ll_{a,\S} \frac 1 {\phi(n)}.
 \end{align*}
\end{proof}

\begin{lemma}
Assume Hypothesis \ref{hypothèse sur h}. Let $h:[0,\infty) \rightarrow \mathbb [0,\infty)$ be a piecewise continuous function supported on $[0,1]$, taking a value halfway between the limit values at discontinuities, and suppose the integral
$$ \M h(s) := \int_0^1 h(x) x^{s-1}dx$$
converges absolutely for $\Re (s)>0$. Then,
\label{transformée de Mellin}
\begin{equation}
\label{equation transformée de Mellin}
\sum_{n\leq M} \frac{\f_a(n)}{n\gamma(n)} h\left( \frac nM\right)= \frac 1{2\pi i} \int_{(1)} \mathfrak{S}_2(s) \zeta(s+1) \zeta(s+2)^{1-\K} Z_5(s) \M h(s) M^s ds,
\end{equation}
where

\begin{multline*}
\mathfrak S_2 (s):= \prod_{\substack{p^f \parallel a \\ f\geq 1}} \Bigg[ \left( 1+\frac{\h(p)}{p^{s+1}}+...+\frac{\h(p^{f})}{p^{f(s+1)}} \right) \left(1-\frac 1{p^{s+1}} \right) \\
 + \frac{ \h(p^f)-\h(p^{f+1})/p}{1-1/p } \frac 1 {p^{(f+1)(s+1)}} \Bigg] \left(1-\frac 1 {p^{s+2}} \right)^{1-\K},
\end{multline*}
\begin{equation}Z_5(s):=\prod_{p \nmid a} \left( 1+\frac 1 {p^{s+1}} \left( \frac 1 {\gamma(p)}-1 \right) \right)\left(1-\frac{1}{p^{s+2}}\right)^{1-\K}. \label{definition de Z_5}
\end{equation}
 Moreover, $\mathfrak S_2 (s)$ is holomorphic in $\mathbb C \setminus \{ -2 \}$ and $Z_5(s)$ is holomorphic for $\Re s > -1$.

\end{lemma}
\begin{proof}
Define
$$ Z_{\A}(s):=\sum_{n=1}^{\infty} \frac{\g_a(n)}{n^s} = \prod_p \left(1+\frac{\g_a(p)}{p^s}+\frac{\g_a(p^2)}{p^{2s}}+... \right).$$
A standard computation using the definition of $\g_a(n)$ (see \eqref{definition de g avec h}) yields that
\begin{equation*}Z_{\A}(s) = \mathfrak S_2 (s) \zeta(s+1) \zeta(s+2)^{1-\K}  Z_5(s). \end{equation*}
The function $\mathfrak S_2 (s)$ is clearly holomorphic in $\mathbb C \setminus \{ -2 \}$, and the fact that $Z_5(s)$ is holomorphic for $\Re s > -1$ follows from Hypothesis \ref{hypothèse sur h}.
Now, Mellin inversion gives that
$$ h\left(\frac nM\right) = \frac 1 {2\pi i } \int_{(1)} \frac{M^s}{n^s} \M h(s) ds. $$
Multiplying by $\frac{\f_a(n)}{n\gamma(n)}$ and summing over $n$ yields the result.
\end{proof}

\subsubsection{Properties of the Dirichlet series}

\begin{lemma}
\label{régularité de Z_5 en -1}
Assume Hypothesis \ref{hypothèse sur h}. We have 
$$ Z_5(s)=Z_5(-1) +O( |s+1|) $$
in the region $|s+1|\leq 3$, with $\Re s > -1$.
\end{lemma}
\begin{proof}
We will show that
$$ \log \frac{Z_5(s)}{Z_5(-1)} \ll |s+1|,$$
from which the lemma clearly follows. Let $s$ be a complex number with $\Re s >-1$. We compute

\begin{align*}
 \log \frac{Z_5(s)}{Z_5(-1)} &= \sum_{p\nmid a} \log \left( \frac{1+\frac 1{p^{s+1}}(\frac 1 {\gamma(p)}-1)}{ \frac 1 {\gamma(p)}} \cdot \frac{\left(1-\frac 1{p^{s+2}} \right)^{1-\K}}{\left(1-\frac 1{p} \right)^{1-\K}}\right) \\
&= \sum_{p\nmid a} \Bigg[  \log \left( 1-(1-\gamma(p)) \left( 1-\frac 1 {p^{s+1}}  \right) \right)  \\
& \hspace{4cm}+(1-\K)  \log \left( 1+\frac 1{p-1} \left( 1- \frac 1{p^{s+1}} \right)\right)  \Bigg] \\
&=\sum_{p\nmid a} \left[\frac{\h(p)-1}{p-\h(p)} \left( 1-\frac 1 {p^{s+1}}  \right) +\frac {1-\K}{p-1} \left( 1- \frac 1{p^{s+1}} \right)\right]  \\
& \hspace{5cm}+ O_{\epsilon}\left(|s+1|^2 \sum_p \frac {\log^ 2 p}{p^{2-\epsilon}}\right) 
\end{align*}
\begin{align}
\label{equation régularité en -1}
& =  \sum_{p\nmid a} \left( \frac{\h(p)-1}{p-\h(p)} + \frac {1-\K}{p-1} \right) \left( 1-\frac 1 {p^{s+1}}  \right) +O\left(  |s+1|^2 \right) \notag \\
& =  \sum_{p} \left( \frac{\h(p)-1}{p-\h(p)} + \frac {1-\K}{p-1} \right) \left( 1-\frac 1 {p^{s+1}}  \right) +O\left(  |s+1| \right).
\end{align}

Note that by Hypothesis \ref{hypothèse sur h}, the series
$$ \sum_p \left( \frac{\h(p)-1}{p-\h(p)} + \frac {1-\K}{p-1} \right) = \sum_p \frac{\h(p)-\K}p + O(1) $$
converges. Moreover, summation by parts yields the following estimate:
$$ S(t):=\sum_{p\leq t} \left( \frac{\h(p)-1}{p-\h(p)} + \frac {1-\K}{p-1} \right) = S(\infty) + O\left(\frac 1{\log^2 (t+2)} \right). $$
We then get that
\begin{align*}  
\sum_{p\leq T} &\left( \frac{\h(p)-1}{p-\h(p)} + \frac {1-\K}{p-1} \right) \left( 1-\frac 1 {p^{s+1}}  \right) =  \int_1^T \left( 1-\frac 1 {t^{s+1}}  \right) dS(t) \\
&= \left( 1-\frac 1 {t^{s+1}}  \right) S(t) \Bigg|_1^{T} - (s+1)\int_1^{T} \frac{S(t)}{t^{s+2}} dt \\
&= \left( 1-\frac 1 {T^{s+1}}  \right)\left(S(\infty) + O\left(\frac 1{\log^2 T} \right) \right) - (s+1) \int_1^{T} \frac{S(\infty)}{t^{s+2}} dt  \\
 & \hspace{4cm}+ O\left(|s+1| \int_1^{T} \frac{dt}{t\log ^2(t+2)} \right) \\
&= S(\infty) \left( 1-\frac 1 {T^{s+1}}  \right) +O\left(\frac 1{\log^2 T}   \right) + \frac {S(\infty)} {t^{s+1}} \Bigg| _1^ T + O\left( |s+1|\right) \\
&=O\left( \frac 1{\log^2 T} + |s+1| \right).
\end{align*}
Taking $T\rightarrow \infty$ yields that \eqref{equation régularité en -1} is $\ll |s+1|$.
\end{proof}

\begin{lemma}
\label{dérivées et dérivées logarithmiques}
Let $f(s)$ be a holomorphic function over a domain $\mathcal D$. We have that $\frac{f^{(n)}}{f} (s)$ is a polynomial in the variables 
$  \left(\frac{f'(s)}{f(s)}\right)^{(0)},\left(\frac{f'(s)}{f(s)}\right)^{(1)},...,\left(\frac{f'(s)}{f(s)}\right)^{(n-1)}$, with integer coefficients.
\end{lemma}
\begin{proof}
The proof goes by induction, using the identity
$$ \frac{f^{(n)}}{f} = \left( \frac{f^{(n-1)}}f \right)'  + \frac{f^{(n-1)}}{f} \frac{f'}{f}.$$

\end{proof}

\begin{lemma}
\label{bornes sur Z_5}
Assume Hypothesis \ref{hypothèse sur h}. Let $Z_5(s)$ be defined as in \eqref{definition de Z_5} and let $n\geq 0$. Then there exists $\delta>0$ such that, uniformly in the region $-1<\sigma < -\frac 12$ and $t\in \mathbb R$, we have
\begin{equation} Z_5^{(n)}(\sigma+it) \ll_n (|t|+2)^{1/2-\delta} . \label{bornes sur dérivées de Z_5} \end{equation}
\end{lemma}
\begin{proof}

First write $Z_5(s)=Z_3(s)Z_4(s)$, where
\begin{align*}
 Z_3(s):= \prod_{p\nmid a} \left( 1+\frac 1 {p^{s+1}} \left( \frac 1 {\gamma(p)}-1 \right) \right) \left(1-\frac{1-\K}{p^{s+2}}\right),
\end{align*}
\begin{align*}
 Z_4(s):= \prod_{p\nmid a}\left(1-\frac{1}{p^{s+2}} \right)^{1-\K} \left(1-\frac{1-\K}{p^{s+2}}\right)^{-1}.
\end{align*}
The function $Z_4(s)$ is uniformly bounded in the region $\Re s \geq -1 $, since the Eulerian product converges absolutely. As for $Z_3(s)$, we have for $-1<\sigma < -\frac 12$ that
$$\log Z_3(\sigma+it) =  \log \prod_{p\nmid a} \left( 1+\frac 1{p^{\sigma+1+it}} \frac{\K-\h(p)}{p} \right) +O(1). $$
Hypothesis \ref{hypothèse sur h} gives
$$ S(x,t):=\sum_{p\leq x} \frac{\K-\h(p)}{p^{1+it}} \leq (1/2-\delta) \log(|t|+2)+O(1). $$
Thus, 
\begin{align*}
\log \prod_{p\nmid a} \left( 1+\frac 1{p^{\sigma+1+it}} \frac{\K-\h(p)}{p} \right)  &= \sum_{p\nmid a} \frac 1{p^{\sigma+1}} \frac{\K-\h(p)}{p^{1+it}} + O(1) \\
&=\int_1^{\infty} \frac{dS(x,t)}{x^{\sigma+1}}+O(1) \\
&= \frac{S(x,t)}{x^{\sigma+1}}\Big|_1^{\infty} + (\sigma+1) \int_1^{\infty} \frac{S(x,t)}{x^{\sigma+2}} dx +O(1) \\
&\leq (1/2-\delta) \log(|t|+2)   \int_1^{\infty} \frac{\sigma+1}{x^{\sigma+2}} dx +O(1) \\
&= (1/2-\delta) \log(|t|+2)+O(1),
\end{align*}
which proves \eqref{bornes sur dérivées de Z_5} for $n=0$. The bound
$$ \sum_{p\leq x} \frac{(\K-\h(p)) \log^m p}{p^{1+it}} \ll_{\epsilon} (|t|+2)^{\epsilon} $$
gives
\begin{equation}
\label{equation borne sur Z_5}
 \left(\frac{Z_5'(\sigma+it)}{Z_5(\sigma+it)}\right)^{(m)} \ll_{\epsilon} (|t|+2)^{\epsilon} 
\end{equation}
for $m \geq 0.$ We finish the proof of \eqref{bornes sur dérivées de Z_5} for $n\geq 1$ by applying Lemma \ref{dérivées et dérivées logarithmiques}.

\end{proof}

\begin{lemma}
\label{bornes sur zeta}
 We have for $|\sigma+it-1|>\frac 1 {10}$ that $$ \zeta(\sigma+it)\ll_{\epsilon} (|t|+2)^{\mu(\sigma)+\epsilon}, $$ where
\begin{equation*}
\mu(\sigma)=\begin{cases}
             1/2-\sigma &\text{ if } \sigma \leq 0 \\
1/2-2\sigma/3 &\text{ if } 0 \leq \sigma \leq 1/2 \\
1/3-\sigma/3 &\text{ if } 1/2\leq  \sigma \leq 1 \\
0  &\text{ if } \sigma \geq 1. \\
            \end{cases}
\end{equation*}
Moreover, these bounds are uniform for $\sigma$ contained in any compact subset of $\mathbb R$.
\end{lemma}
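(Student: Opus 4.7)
The plan is to establish the required bound at a few strategic vertical lines and then interpolate using the Phrag\-men--Lindel\"of convexity principle applied to $(s-1)\zeta(s)$, which is entire.

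First I would handle the two ``endpoints'' of the strip. For $\sigma\geq 1$ and $|s-1|>1/10$, the classical bound $\zeta(\sigma+it)\ll \log(|t|+2)$ (and $\zeta$ bounded for $\sigma\geq 1+\delta$) gives $\mu(\sigma)=0$. On the critical line, I would invoke the classical Weyl--Hardy--Littlewood estimate $\zeta(1/2+it)\ll_\epsilon (|t|+2)^{1/6+\epsilon}$, which is precisely $\mu(1/2)\leq 1/6$; this is the substantive input and I would simply quote it (e.g.\ from Titchmarsh, \emph{The Theory of the Riemann Zeta-Function}, Ch.~V) rather than reproduce the van der Corput estimates on the exponential sums $\sum n^{-it}$.

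Next I would use the functional equation $\zeta(s)=\chi(s)\zeta(1-s)$, where Stirling's formula for $\Gamma$ yields $\chi(\sigma+it)\asymp (|t|+2)^{1/2-\sigma}$ uniformly for $\sigma$ in any compact set and $|t|\geq 1$. This immediately transfers the bound at $\sigma=1$ to $\sigma=0$, giving $\zeta(it)\ll_\epsilon(|t|+2)^{1/2+\epsilon}$, i.e.\ $\mu(0)\leq 1/2$. More generally, for $\sigma\leq 0$ one has $1-\sigma\geq 1$ so $\mu(1-\sigma)=0$, whence
$$\zeta(\sigma+it)\ll (|t|+2)^{1/2-\sigma}\cdot (\log(|t|+2))\ll_\epsilon (|t|+2)^{1/2-\sigma+\epsilon}.$$

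Finally, to fill in the two remaining linear pieces on $[0,1/2]$ and $[1/2,1]$, I would apply the Phrag\-men--Lindel\"of principle to the entire function $(s-1)\zeta(s)$ in each of the strips $0\leq\sigma\leq 1/2$ and $1/2\leq\sigma\leq 1$, using the three already-established bounds at $\sigma=0,\,1/2,\,1$. Convexity of $\log|f(\sigma+it)|$ in $\sigma$ then yields the linear interpolation: $\mu(\sigma)=1/2-2\sigma/3$ on $[0,1/2]$ (connecting $1/2$ to $1/6$) and $\mu(\sigma)=1/3-\sigma/3$ on $[1/2,1]$ (connecting $1/6$ to $0$). The excluded disk $|s-1|\leq 1/10$ poses no issue because we multiplied out the pole before applying Phrag\-men--Lindel\"of, and the factor $|s-1|^{-1}$ is bounded outside that disk. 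The uniformity claim in $\sigma$ on compact subsets of $\mathbb R$ is automatic from these methods. The only genuine obstacle is the Weyl bound at $\sigma=1/2$; everything else is convexity and the functional equation, so I would not attempt to reprove the van der Corput input.
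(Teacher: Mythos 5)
Your proof is correct and is essentially the argument behind the paper's proof, which simply cites Tenenbaum, Section II.3.4: there (II.3.13) is the functional-equation input $\chi(\sigma+it)\asymp (|t|+2)^{1/2-\sigma}$, Theorem 3.8 is the van der Corput/Weyl exponent $1/6$ at $\sigma=1/2$, and Phragm\'en--Lindel\"of supplies the linear interpolation together with (via Edwards, as the paper notes) the uniformity in $\sigma$. The one detail to tidy is the removal of the pole: after bounding $(s-1)\zeta(s)$ you must divide by $|s-1|\asymp |t|+2$ for $|t|\geq 1$ rather than merely using $|s-1|>\frac1{10}$ (which would lose a full factor of $|t|$), the remaining region $|t|\leq 1$, $|s-1|>\frac1{10}$ being compact and hence trivial.
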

\begin{proof}
 See Section II.3.4 of \cite{tenenbaum}, in particular (II.3.13) and Theorem 3.8. By studying the proof of the Phragment-Lindelöf principle (see Chapter 9 of \cite{edwards} for instance), we see that the bounds we get are uniform in $\sigma$.
\end{proof}

\begin{lemma}
\label{borne sur dérivées de Z}
Assume Hypothesis \ref{hypothèse sur h}. Let 
$$Z(s):=   \frac{\mathfrak{S}_2(s) \zeta(s+1) \zeta(s+2)^{1-\K} Z_5(s)}{s(s+1)},$$
with $\mathfrak{S}_2(s)$ and $Z_5(s)$ defined as in Lemma \ref{transformée de Mellin}. There exists $\delta>0$ such that uniformly for $|t|\geq 2$ and $-1<\sigma<-\frac 12$,
$$ Z^{(n)}(\sigma+it) \ll_n \frac{1}{|t|^{1+\delta}}.  $$

\end{lemma}
\begin{proof}
Define  $$Z_6(s):=  \frac{\mathfrak{S}_2(s) \zeta(s+2)^{1-\K} Z_5(s)}{s(s+1)}.$$
Write $s=\sigma+it$, with $-1<\sigma<-\frac 12$ and $|t|\geq 2$. We have for $m \geq 0$ that
\begin{equation*}
 \left( \frac{Z_6'(s)}{Z_6(s)}\right)^{(m)} = \left( \frac{\mathfrak{S}'_2(s)}{\mathfrak{S}_2(s)}\right)^{(m)}
+(1-\K)\left( \frac{\zeta'(s+2)}{\zeta(s+2)}\right)^{(m)}
+\left( \frac{Z_5'(s)}{Z_5(s)}\right)^{(m)}-\left( \frac{2s+1}{s(s+1)}\right)^{(m)}.
\end{equation*}
We compute that
\begin{equation*}
 \left( \frac{\mathfrak{S}'_2(s)}{\mathfrak{S}_2(s)}\right)^{(m)} \ll_{m} 1, \hspace{1cm} \left( \frac{2s+1}{s(s+1)}\right)^{(m)} \ll_m 1, \hspace{1cm}\left( \frac{Z_5'(s)}{Z_5(s)}\right)^{(m)} \ll_{m,\epsilon} |t|^{\epsilon}.
\end{equation*}
(The first bound is clear, the second follows from the fact that $|t|\geq 2$ and the third comes from \eqref{equation borne sur Z_5}.) Applying Cauchy's formula for the derivatives as in Corollaire II.3.10 of \cite{tenenbaum} and then using the bound (II.3.55) of \cite{tenenbaum} yields
\begin{equation*}
 \left( \frac{\zeta'(s+2)}{\zeta(s+2)}\right)^{(m)} \ll_m \log^{m+1}(|t|).
\end{equation*}
Using Lemma \ref{dérivées et dérivées logarithmiques},
$$ Z_6^{(m)}(s) \ll_{\epsilon,m} |Z_6(s)| |t|^{\epsilon}$$
for $m\geq 0$. We now use Lemma \ref{bornes sur Z_5} to bound $|Z_5(s)|$, which gives
$$ Z_6^{(m)}(s) \ll_{m} |\zeta(s+2)^{1-\K}| |t|^{-3/2-2\delta}$$
for some $\delta>0$. Now if $\K\leq 1$, we use Lemma \ref{bornes sur zeta} to bound $\zeta(s+2)^{1-\K}$. Otherwise, we use the bound $(\zeta(s+2))^{-1} \ll \log(|t|)$ (see (II.3.56) of \cite{tenenbaum}). In both cases we get 
$$ Z_6^{(m)}(s) \ll_{m} |t|^{-3/2 -\delta}.$$
We now use Cauchy's formula for the derivatives, which states that 
$$ \zeta^{(k)}(s+1) = \frac{k!}{2\pi i} \oint_{|z|=r} \zeta(s+1+z) \frac{dz}{z^{k+1}}.$$
Selecting $r=\epsilon/2$ and applying Lemma \ref{bornes sur zeta}, we get the bound\footnote{This bound is still valid outside the zero-free region of $\zeta(s+1)$; this is why we considered the ordinary derivatives of $\zeta(s+1)$ instead of its logarithmic derivatives as with the other terms.}
$$ \zeta^{(k)}(s+1) \ll_{k,\epsilon} |t|^{1/2+\epsilon}.$$
We conclude the existence of $\delta>0$ such that  

$$Z^{(n)}(s) = \sum_{i=0}^n \binom{n}{i}\zeta^{(i)}(s+1) Z_6^{(n-i)}(s) \ll_n \frac{1}{|t|^{1+\delta}}. $$

\end{proof}

\subsubsection{The value of $\mu_{\K}(a,M)$}
\begin{proposition}
\label{k entier}
Assume Hypothesis \ref{hypothèse sur h}. If $\K\in \mathbb Z$, then
\begin{multline*}
\frac 1{2\pi i} \int_{(-1/2)}\frac{\mathfrak{S}_2(s) \zeta(s+1) \zeta(s+2)^{1-\K} Z_5(s)}{s(s+1)} M^s ds = 
 -\frac{\mu_{\K}(a,M)}M \left(1+O\left(\frac{\log\log M}{\log M}\right)\right) \\+ O_A\left(\frac 1{M\log^ A M} \right)
\end{multline*}
where $\mu_{\K}(a,M)$ is defined in Definition \ref{définition de mu_k}.
\end{proposition}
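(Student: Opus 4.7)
The strategy is to extract the contribution of the singularity of $Z(s)$ at $s = -1$ via a splitting that sidesteps the fact that $Z_5(s)$ is only known to converge for $\Re s > -1$. First I would identify the order of the pole of $Z(s)$ at $s = -1$ as $k_0 := 2 - \K - \omega_{\h}(a)$: the factor $1/(s+1)$ contributes a simple pole, $\zeta(s+2)^{1-\K} \sim (s+1)^{\K-1}$ near $s = -1$ via the simple pole of $\zeta$ at $1$, and $\mathfrak{S}_2(s)$ vanishes to order exactly $\omega_{\h}(a)$, since for each $p^f \parallel a$ with $f \geq 1$ and $\h(p^f) = \h(p^{f+1})/p$ the additive correction in the bracketed Eulerian factor drops out at $s = -1$ while the remaining term contains the vanishing factor $1 - p^{-(s+1)}$. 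The remaining factors $\zeta(s+1)$, $1/s$, and $Z_5(s)$ (the latter by Lemma \ref{r�gularit� de Z_5 en -1}) are analytic and nonzero at $s = -1$. When $\K \in \mathbb{Z}$ and $k_0 \leq 0$, the integrand is analytic at $s = -1$ and $\mu_\K(a,M) = 0$ via $1/\Gamma(-n) = 0$; the conclusion then reduces to the contour shift below. Assume henceforth that $k_0 \geq 1$.

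Next I would split $Z_5(s) = Z_5(-1) + (Z_5(s) - Z_5(-1))$, decomposing $Z(s) = Z_{\mathrm{main}}(s) + Z_{\mathrm{err}}(s)$, where $Z_{\mathrm{main}}(s)$ is obtained by replacing $Z_5(s)$ with the constant $Z_5(-1)$ in the definition of $Z$. The key point is that $Z_{\mathrm{main}}(s)$ is meromorphic on all of $\mathbb{C}$. For $\int Z_{\mathrm{main}}(s) M^s\,ds$, shift the contour from $\Re s = -1/2$ past $s = -1$ to $\Re s = -1 - c$ with $c>0$ a small fixed constant, picking up the residue at $s = -1$. Truncating at $|\Im s| \leq T = (\log M)^{C(A)}$ and using the standard bounds on $\zeta$ from Lemma \ref{bornes sur zeta} together with the boundedness of the finite Euler product $\mathfrak{S}_2$, the shifted (and truncated) integral contributes $O_A(M^{-1} \log^{-A} M)$. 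The residue is computed by writing $Z_{\mathrm{main}}(s) = H(s)/(s+1)^{k_0}$ where, using $(s+1)\zeta(s+2)|_{s=-1} = 1$ and $\zeta(0) = -1/2$, one finds $H(-1) = \tfrac12 \tilde{\mathfrak{S}}_2(-1) Z_5(-1)$ with $\tilde{\mathfrak{S}}_2(s) := (s+1)^{-\omega_{\h}(a)}\mathfrak{S}_2(s)$. Expanding $H(s) M^{s+1}$ to order $k_0 - 1$ at $s = -1$ and matching the Eulerian factors of $\tilde{\mathfrak{S}}_2(-1) Z_5(-1)$ to the explicit products of Definition \ref{d�finition de mu_k} (a factor-by-factor calculation over $p\mid a$ and $p\nmid a$), the leading term of the residue is $-\mu_\K(a,M)/M$, while sub-leading terms contribute a relative correction of size $O(\log\log M/\log M)$.

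For the error integral $\int Z_{\mathrm{err}}(s) M^s\,ds$, shift only to $\Re s = -1 + 1/\log M$, which lies inside the region of holomorphy of $Z_5$, so no residue is crossed. By Lemma \ref{r�gularit� de Z_5 en -1}, $Z_5(s) - Z_5(-1) \ll |s+1|$, so $Z_{\mathrm{err}}$ has a pole at $s = -1$ of order at most $k_0 - 1$ instead of $k_0$; combined with the decay estimates of Lemma \ref{borne sur d�riv�es de Z} and the truncation $|t| \leq T$, this error integral is $O(M^{-1}(\log M)^{k_0-2})$, which is $O(M^{-1}/\log M)$ relative to the main term and absorbed into the $O(\log\log M/\log M)$ correction. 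The central obstacle throughout is the absence of analytic continuation of $Z_5$ past $\Re s = -1$; the decomposition $Z = Z_{\mathrm{main}} + Z_{\mathrm{err}}$ is engineered precisely to isolate the residue computation into a globally meromorphic piece while the non-meromorphic remainder is kept on a contour which does not cross $s = -1$.
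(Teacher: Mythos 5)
Your architecture is genuinely different from the paper's: you replace $Z_5(s)$ by the constant $Z_5(-1)$ to manufacture a globally meromorphic main piece that can be pushed past $s=-1$, whereas the paper never crosses the line $\Re s=-1$ --- it subtracts only the principal part $c/(s+1)^{k_0}$ of the singularity (with $k_0:=2-\K-\omega_{\h}(a)$), evaluates that term by an explicit residue computation, and estimates the remainder on the line $\Re s=-1+\frac 1{\log M}$. Your identification of the order of the singularity, the value $H(-1)=\tfrac 12\,\tilde{\mathfrak S}_2(-1)Z_5(-1)$, and the matching with the products defining $\mu_{\K}(a,M)$ are all correct. There are, however, two concrete gaps.

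First, the case $k_0=1$ (i.e.\ $\K+\omega_{\h}(a)=1$). Here the main term is $H(-1)/M\asymp 1/M$, with no factor of $\log M$, so the error integral must be $o(1/M)$. But the only input you invoke near $s=-1$ is $Z_5(s)-Z_5(-1)\ll|s+1|$, which makes $Z_{\mathrm{err}}$ merely \emph{bounded} there; integrating a bounded function over $|t|\le 2$ on the line $\Re s=-1+\frac 1{\log M}$, together with the integrable tail, gives only $O(1/M)$ --- the same size as the main term. Your asserted bound $O(M^{-1}(\log M)^{k_0-2})=O(M^{-1}(\log M)^{-1})$ requires exploiting the oscillation of $M^{it}$: one integration by parts, which in turn needs the derivative bound $Z_{\mathrm{err}}'(s)\ll|s+1|^{-1}$ near $s=-1$ (available from the lemma giving $Z_5(s)=Z_5(-1)+O(|s+1|)$ combined with Lemma \ref{bornes sur Z_5}) and $\ll|t|^{-1-\delta}$ for $|t|\ge 2$. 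This is precisely what the paper does with its function $Z_7$, and it is the source of the $\log\log M/\log M$ in that case; without it your estimate fails to beat the main term.

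Second, the degenerate case $k_0\le 0$, where $\mu_{\K}(a,M)=0$ and the proposition demands the full $O_A(M^{-1}\log^{-A}M)$. ``The conclusion reduces to the contour shift below'' does not deliver this: (i) when $\K\ge 2$ your main piece contains $\zeta(s+2)^{-(\K-1)}$, whose poles at the zeros of $\zeta$ obstruct a shift to a \emph{fixed} line $\Re s=-1-c$ (one can only go as far as the zero-free region permits); and (ii) more seriously, $Z_{\mathrm{err}}$ still cannot cross $\Re s=-1$, and on $\Re s=-1+\frac 1{\log M}$ the trivial estimate again yields only $O(1/M)$, nowhere near $O_A(M^{-1}\log^{-A}M)$. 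The paper obtains the arbitrary-power-of-$\log$ saving by integrating by parts $A$ times, using that in this case \emph{all} derivatives of $Z$ are locally bounded near $s=-1$ from the right (Lemma \ref{bornes sur Z_5}) and decay like $|t|^{-1-\delta}$ for $|t|\ge 2$; you would need the same device, applied either to $Z$ itself or to both pieces of your decomposition.
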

\begin{proof}

We first need to understand the behaviour of
\begin{align}
\label{représentation 1 de Z(s)}
Z(s) &:=   \frac{\mathfrak{S}_2(s) \zeta(s+1) \zeta(s+2)^{1-\K} Z_5(s)}{s(s+1)} \\
&= (s+1)^{\K+\omega_{\h}(a)-2} \frac{\mathfrak{S}_2(s)}{(s+1)^{\omega_{\h}(a)}}\zeta(s+1) ((s+1)\zeta(s+2))^{1-\K}  \frac{Z_5(s)}{s} 
\label{représentation 2 de Z(s)}
\end{align}
in the region $\mathcal D : -1 \leq  \Re s \leq -1/2$. This function is holomorphic for $\Re s > -1$ by Lemma \ref{transformée de Mellin}, and as we will see, the only point in $\mathcal D$ where $Z(s)$ is not necessarily locally bounded is $s=-1$. The functions
$$ \zeta(s+1), \hspace{1cm} ((s+1)\zeta(s+2))^{1-\K} \hspace{1cm} \text{and} \hspace{1cm} \frac 1s $$
are holomorphic on $\mathcal D$ and do not vanish at $s=-1$. The function $Z_5(s)$ is holomorphic for $\Re s >-1$, and all its derivatives are locally bounded around any point of $\mathcal D$ by Lemma \ref{bornes sur Z_5}. We compute
$$ Z_5(-1)= \prod_{p\nmid a} \frac{1-\h(p)/p}{(1-1/p)^{\K}}  \neq 0 ,$$
since $\h(p)<p$. As for the function $\mathfrak S_2(s)$, it is holomorphic on $\mathcal D$. However, this function can vanish at $s=-1$ if for a certain $p\mid a$ we have $\h(p^f)=\h(p^{f+1})/p$. In this case, we have for $s$ close to $-1$ that

\begin{multline*}
 \mathfrak S_2(s)  \prod_{\substack{p \mid a}}\left( 1-\frac 1{p^{s+2}}\right)^{\K-1}  = \prod_{\substack{p^f \parallel a: \\ \h(p^f)\neq \h(p^{f+1})/p, \\ f \geq 1} }\left[\frac{\h(p^f)-\h(p^{f+1})/p}{1-1/p}+O(|s+1|)\right]  \\
\times \prod_{\substack{p^f \parallel a: \\ \h(p^f)=\h(p^{f+1})/p, \\ f\geq 1} }[(s+1)(1+\h(p)+...+\h(p^{f}))\log p +O(|s+1|^2)],
\end{multline*}
and since $\h(p^e)\geq 0$, this shows that every local factor has at most a simple zero at $s=-1$. We conclude that  $$\frac{\mathfrak{S}_2(s)}{(s+1)^{\omega_{\h}(a)}}$$ is holomorphic on $\mathcal D$ and does not vanish at $s=-1$. We now split in three distinct cases, depending on the analytic nature of $(s+1)^{\K+\omega_{\h}(a)-2}$ near $s=-1$.

\textbf{First case: $\K+\omega_{\h}(a) \geq 2$.}
 In this case, $Z(s)$ and all of its derivatives are bounded near $s=-1$. To show this, note that it is true for the functions 
$$  (s+1)^{\K+\omega_{\h}(a)-2}, \hspace{0.5cm} \frac{\mathfrak{S}_2(s)}{(s+1)^{\omega_{\h}(a)}}, \hspace{0.5cm} \zeta(s+1), \hspace{0.5cm} ((s+1)\zeta(s+2))^{1-\K}, \hspace{0.5cm}\frac 1s \hspace{0.5cm} \text{and} \hspace{0.5cm} Z_5(s), $$
so it is also true for $Z(s)$ by Leibniz's rule.
We now shift the contour of integration to the left until the line $\Re s = -1+ \frac 1{\log M}$ to get
 \begin{align*}
 \frac 1{2\pi i} \int_{(-1/2)} Z(s) ds &= \frac i{2\pi i} \int_{\mathbb R} Z\left(-1+\frac 1{\log M} +it\right) M^{-1+\frac 1{\log M} + it} dt \\
&= \frac eM \frac 1{2\pi } \int_{\mathbb R} Z\left(-1+\frac 1{\log M} +it\right) e^{it \log M} dt,
 \end{align*}
which gives, after $A$ integrations by parts,
\begin{align*} \frac 1{2\pi i} \int_{(-1/2)} Z(s) ds  &\ll_A \frac 1 {M \log^A M} \int_{\mathbb R} \left|Z^{(A)}\left(-1+\frac 1{\log M} +it\right)\right| \left|e^{it \log M} \right| dt  \\
& \ll_{A} \frac 1 {M \log^A M}  \left( O(1) +\int_{|t|\geq 2} \frac 1 {|t|^{1+\delta}}  dt  \right)\\
& \ll_{A} \frac 1 {M \log^A M}
\end{align*}
by Lemma \ref{borne sur dérivées de Z}. Note that the uniformity in $\sigma$ was crucial. This shows that we can take $\mu_{\K}(a,M)=0$.

\textbf{Second case: $\K+ \omega_{\h}(a) = 1$.} Let $$ c:= \lim_{s\rightarrow 1^+} (s+1) Z(s) \neq 0$$
and define
$$ Z_7(s):=Z(s) -  \frac {c}{s+1}. $$
We can show using Lemma \ref{régularité de Z_5 en -1} that for $s$ close to $-1$ with $\Re s > -1$, the following bound holds:
$$Z_7(s) \ll 1. $$
Lemma \ref{régularité de Z_5 en -1} implies that for $s$ close to $-1$ with $\Re s > -1$, the function
$$ Z_7'(s) = \frac{((s+1)Z(s))'}{s+1} - \frac{(s+1)Z(s)}{(s+1)^2} +\frac {c}{(s+1)^2} $$
satisfies
$$ Z_7'(s) \ll \frac 1{|s+1|}.$$
 Using Lemma \ref{borne sur dérivées de Z}, we get that for $|t|\geq 2$, 
$$ Z_7'(s) \ll \frac 1 {|t|^{1+\delta}}. $$
Thus,
\begin{align*}
\frac 1{2\pi i}\int_{(-1+\frac 1 {\log M})}Z_7(s) M^s ds &= \frac {-1}{2\pi i \log M}\int_{(-1+\frac 1 {\log M})}Z'_7(s) M^s ds \\
& \ll \frac 1 {M\log M} \left|\int_{-\infty}^{\infty} Z'_7\left( -1+\frac 1 {\log M} + it \right) M^{it} dt\right| \\
& \ll \frac 1 {M\log M}\left( \left|\int_{-2}^{2} Z'_7\left( -1+\frac 1 {\log M} + it \right) M^{it} dt\right| + O(1) \right) \\ 
 & \ll  \frac 1 {M\log M}\left( \int_{-2}^{2} \frac 1{\frac 1 {\log M} + |t|} dt  + O(1) \right) \\
 & \ll \frac 1 {M\log M}\left( \int_{0}^{\frac 1{\log M}} \log M + \int_{\frac 1 {\log M}} ^2  \frac 1t dt  + O(1) \right) \\
&\ll \frac{\log \log M}{M\log M}.
\end{align*}
Combining this bound with an easy residue computation yields
\begin{align*}\frac 1{2\pi i} \int_{(-1/2)} Z(s)  M^s ds &= \frac 1{2\pi i} \int_{(-1/2)} Z_7(s)  M^s ds + \frac 1{2\pi i} \int_{(-1/2)}\frac { c}{s+1} M^s ds\\
 &= \frac{c}M \left( 1+O\left( \frac {\log \log M} { \log M} \right)\right) . 
\end{align*}
Now remarks \ref{remarque k=0} and \ref{remarque k=1} show that $c=-\mu_{\K}(a,M)$, which concludes this case.

\textbf{Third case: $\K=\omega_{\h}(a)=0$.}
Defining
$$c:= \lim_{s \rightarrow -1^+} (s+1)^2 Z(s) \neq 0, $$
we get that the function $Z_8(s):=Z(s)-\frac{c}{(s+1)^2}$ satisfies the bound $$Z_8(s) \ll \frac 1{|s+1|}$$ by Lemma \ref{régularité de Z_5 en -1}. 
An easy residue computation yields
\begin{equation*}
\frac 1{2\pi i} \int_{(-1/2)}Z(s) M^s ds=
c \frac{\log M}{M} + \frac 1{2\pi i} \int_{(-1+\frac 1 {\log M})}Z_8(s) M^s ds.
\end{equation*}
Proceeding in an analogous way to the previous case, we compute
\begin{align*}
\int_{(-1+\frac 1 {\log M})}Z_8(s) M^s ds & \ll \frac 1 M \left|\int_{-\infty}^{\infty} Z_8\left( -1+\frac 1 {\log M} + it \right) M^{it} dt\right| \\
& \ll \frac 1 M\left( \left|\int_{-2}^{2} Z_8\left( -1+\frac 1 {\log M} + it \right) M^{it} dt\right| + O(1) \right) \\ 
&\ll \frac{\log \log M}{M},
\end{align*}
from which we conclude
\begin{align*}
\frac 1{2\pi i} \int_{(-1/2)}Z(s) M^s ds&=c  \frac{\log M}{M}\left(1+ \frac{\log \log M}{ \log M} \right)\\
& =-\frac{\mu_0(a,M)}M \left(1+ \frac{\log \log M}{ \log M} \right)
\end{align*}
by Remark \ref{remarque k=0}, since
$$c= \frac 1{2}\prod_{\substack{p^f\parallel a \\ f\geq 0}} (\h(p^f)-\h(p^{f+1})/p).$$

\end{proof}

\begin{lemma}
 \label{hankel}
Let $z> 1$ be a real number. Then,
$$ \frac 1 {2\pi i} \int_{\Re s = -1/2} \frac{M^s}{(s+1)^z} ds = \frac 1M\frac{(\log M)^{z-1}}{\Gamma(z)}.$$
\end{lemma}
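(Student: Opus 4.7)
The plan is to reduce this integral to the classical Hankel representation of the reciprocal gamma function. First I perform the change of variables $s+1 = w/\log M$, so that $ds = dw/\log M$ and $M^s = M^{-1}e^{w}$. The vertical line $\Re s = -1/2$ is carried to the vertical line $\Re w = (\log M)/2$, and the integral becomes
$$
\frac{1}{2\pi i}\int_{\Re s = -1/2} \frac{M^s}{(s+1)^z}\,ds \;=\; \frac{(\log M)^{z-1}}{M}\cdot \frac{1}{2\pi i}\int_{\Re w = (\log M)/2}\frac{e^w}{w^z}\,dw,
$$
where $w^{-z}$ is interpreted using the principal branch, cut along the negative real axis.

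Next I would invoke Hankel's formula
$$
\frac{1}{\Gamma(z)} \;=\; \frac{1}{2\pi i}\int_{c-i\infty}^{c+i\infty}\frac{e^w}{w^z}\,dw, \qquad c>0,
$$
which is valid for $\Re z > 1$ so that the integrand is absolutely integrable (its modulus on the vertical line decays like $|\Im w|^{-\Re z}$). Applying this with $c = (\log M)/2$ gives exactly $(\log M)^{z-1}/(M\,\Gamma(z))$, which is the claimed identity.

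The only real issue is justifying Hankel's identity itself, which is entirely standard: one deforms the vertical line to a Hankel contour wrapping the negative real axis. The deformation is legal because $|e^w|$ decays exponentially in the left half-plane while $|w^z|$ only grows polynomially, so the arcs at infinity contribute nothing; the wrapped contour then produces $2\pi i/\Gamma(z)$ via the reflection formula for $\Gamma$. There is no serious obstacle here; in the applications of this lemma (where $z$ is of the form $2-\K-\omega_{\h}(a)$ or shifted variants arising from the Mellin analysis of $\mathfrak S_2(s)\zeta(s+1)\zeta(s+2)^{1-\K}Z_5(s)$) the convergence requirement $\Re z > 1$ can be arranged, and if necessary one extends to smaller $z$ by analytic continuation in $z$, since both sides of the claimed identity are holomorphic in $z$.
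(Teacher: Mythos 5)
Your proof is correct and follows essentially the same route as the paper: both reduce the integral, via the substitution $w=(s+1)\log M$, to Hankel's integral representation of $1/\Gamma(z)$, the only difference being that you change variables first and quote the vertical-line (Bromwich) form of Hankel's formula, whereas the paper first deforms the line $\Re s=-1/2$ to a truncated Hankel contour (bounding the connecting arcs by $O(R^{1-z})$) and then substitutes. Since $z>1$ guarantees absolute convergence on the vertical line, your appeal to the standard formula is fully justified and no analytic continuation in $z$ is needed.
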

\begin{proof}

Let $R\geq 2$ be a large real number and consider $\mathcal H_R$ a Hankel contour centered at $s=-1$ and truncated at $-R\pm \epsilon i$. Define $C_R$ to be the union of two circle segments starting at the endpoints of $\mathcal H_R$ and ending at the points $ \pm iR$. By Cauchy's formula,
\begin{align*}
  \frac 1 {2\pi i} \int_{\Re s = -1/2} \frac{M^s}{(s+1)^z} ds &=  \frac 1 {2\pi i} \int_{\Re s = 0} \frac{M^s}{(s+1)^z} ds  \\
&= \frac 1 {2\pi i} \int_{\mathcal H_R} \frac{M^s}{(s+1)^z} ds +  \frac 1 {2\pi i} \int_{C_R} \frac{M^s}{(s+1)^z} ds   \\
& = \frac 1 {2\pi i} \int_{\mathcal H_R} \frac{M^s}{(s+1)^z} ds + O\left( \frac 1 {R^{z-1}} \right),
\end{align*}
so by taking $R \rightarrow \infty$, 

\begin{align*}
  \frac 1 {2\pi i} \int_{\Re s = -1/2} \frac{M^s}{(s+1)^z} ds &= \frac 1 {2\pi i} \int_{\mathcal H_{\infty}} \frac{M^s}{(s+1)^z} ds  \\
& = \frac 1 M \frac 1 {2\pi i} \int_{\mathcal H_{\infty}} \frac{e^{(s+1)\log M}}{(s+1)^z} ds \\
&=  \frac {(\log M)^{z-1}} M \frac 1 {2\pi i} \int_{\mathcal H'_{\infty}} \frac{e^{w}}{w^z} dw \\
&= \frac 1M \frac {(\log M)^{z-1}}{\Gamma(z)}
\end{align*}
by Hankel's formula (see Théorème II.0.17 of \cite{tenenbaum}). Here, $\mathcal H'_{\infty}$ denotes an infinite Hankel contour centered at $w=0$.
\end{proof}

\begin{proposition}
\label{k non entier}
Assume Hypothesis \ref{hypothèse sur h}. If $\K\notin \mathbb Z$, then 
\begin{equation*}
\frac 1{2\pi i} \int_{(-1/2)}\frac{ \mathfrak{S}_2(s) \zeta(s+1) \zeta(s+2)^{1-\K} Z_5(s)}{s(s+1)} M^s ds = -\frac{\mu_{\K}(a,M)}M\left(1+ O\left(\frac 1{\log M}\right)\right).
\end{equation*}
\end{proposition}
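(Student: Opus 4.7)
The plan is to evaluate the integral by deforming the contour into a Hankel loop around the branch point at $s=-1$ and applying Hankel's representation of $1/\Gamma$; since $\K\notin\Z$, the exponent $\K+\omega_{\h}(a)-2$ is non-integer, so $s=-1$ is a genuine branch point of $Z(s)$ (not a pole), and the residue-based argument from Proposition \ref{k entier} is replaced by a direct Hankel computation.

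The first step is to isolate the local structure at $s=-1$. Using the factorization \eqref{repr�sentation 2 de Z(s)} one writes $Z(s)=(s+1)^{\K+\omega_{\h}(a)-2}W(s)$, where
$$W(s):=\frac{\mathfrak S_2(s)}{(s+1)^{\omega_{\h}(a)}}\,\zeta(s+1)\,\bigl((s+1)\zeta(s+2)\bigr)^{1-\K}\,\frac{Z_5(s)}{s}$$
is holomorphic in a neighborhood of $s=-1$ intersected with $\Re s>-1$. Using $\zeta(0)=-\tfrac12$, the residue $\lim_{s\to-1}(s+1)\zeta(s+2)=1$, the explicit value $Z_5(-1)=\prod_{p\nmid a}(1-\h(p)/p)(1-1/p)^{-\K}$, and the Taylor expansion of $\mathfrak S_2(s)/(s+1)^{\omega_{\h}(a)}$ at $s=-1$ already computed in the proof of Proposition \ref{k entier}, a bookkeeping exercise matching the factors of Definition \ref{d�finition de mu_k} yields
$$W(-1)=-\frac{\Gamma(2-\K-\omega_{\h}(a))}{(\log M)^{1-\K-\omega_{\h}(a)}}\,\mu_{\K}(a,M).$$

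Next I would deform the line $\Re s=-1/2$ to a truncated Hankel contour $\mathcal H_M$ wrapping around $s=-1$ along the cut $(-\infty,-1]$, with horizontal arms at height $\pm 1/\log M$ joined by a small semicircular indentation around $s=-1$. The decay $Z(s)\ll|t|^{-1-\delta}$ from Lemma \ref{borne sur d�riv�es de Z} makes the deformation legitimate, as the horizontal bridges at large $|t|$ disappear. Substituting $s=-1+w/\log M$ produces
$$\frac{1}{2\pi i}\int_{\mathcal H_M}Z(s)M^s\,ds=\frac{(\log M)^{1-\K-\omega_{\h}(a)}}{M}\cdot\frac{1}{2\pi i}\int_{\mathcal H'_M}w^{\K+\omega_{\h}(a)-2}\,W\!\left(-1+\tfrac{w}{\log M}\right)e^w\,dw,$$
with $\mathcal H'_M$ the rescaled Hankel contour. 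Since $W$ is holomorphic at $-1$ with locally bounded derivative (by Lemma \ref{bornes sur Z_5} together with the Taylor data above), $W(-1+w/\log M)=W(-1)+O(|w|/\log M)$ uniformly on $\mathcal H'_M$; replacing $\mathcal H'_M$ by the full Hankel contour $\mathcal H'_\infty$ costs an exponentially small error since $\Re w\to-\infty$ on the arms. The main term is then extracted from Hankel's classical formula $\tfrac{1}{2\pi i}\int_{\mathcal H'_\infty}w^{-z}e^w\,dw=1/\Gamma(z)$ (valid for all complex $z$, as in Lemma \ref{hankel}), giving $W(-1)(\log M)^{1-\K-\omega_{\h}(a)}/(M\,\Gamma(2-\K-\omega_{\h}(a)))=-\mu_{\K}(a,M)/M$, while the $O(|w|/\log M)$ remainder integrates against $|w|^{\K+\omega_{\h}(a)-1}|e^w|$ and yields an acceptable $O(1/\log M)$ relative error.

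The main obstacle is the uniform error control on the Hankel contour: one must verify that both the truncation at $|w|\sim\log M$ and the linearization of $W$ near $s=-1$ produce only an $O(1/\log M)$ relative error. Both rest on the local regularity of $W$ near $s=-1$ (supplied by Lemma \ref{r�gularit� de Z_5 en -1} and the explicit Taylor expansion of $\mathfrak S_2$) together with the global decay of Lemma \ref{borne sur d�riv�es de Z}; once these are in place, the identification of $W(-1)$ with $-\Gamma(2-\K-\omega_{\h}(a))\mu_{\K}(a,M)/(\log M)^{1-\K-\omega_{\h}(a)}$ is a bookkeeping match with Definition \ref{d�finition de mu_k}, and the proposition follows.
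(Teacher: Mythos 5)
Your identification of the constant is correct: your $W(-1)$ is exactly the paper's $c=\lim_{s\to-1^+}(s+1)^{2-\K-\omega_{\h}(a)}Z(s)$, and matching it against Definition \ref{d�finition de mu_k} does give $-\Gamma(2-\K-\omega_{\h}(a))\mu_{\K}(a,M)(\log M)^{\K+\omega_{\h}(a)-1}$. The gap is in the contour deformation. A Hankel loop "wrapping around $s=-1$ along the cut $(-\infty,-1]$" necessarily enters the half-plane $\Re s<-1$, where the integrand is not known to exist: $Z_5(s)$ is only holomorphic for $\Re s>-1$ (Lemma \ref{transform�e de Mellin}), and the hypothesis on $\h$ only controls the partial sums $\sum_{p\leq x}(\h(p)-\K)p^{-1-it}$, which yields one-sided boundary regularity at $s=-1$ (the regularity lemma for $Z_5$ is explicitly restricted to $\Re s>-1$) but no analytic continuation to the left. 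Even the explicit factors obstruct the deformation: since $\K\notin\mathbb Z$, the function $\bigl((s+1)\zeta(s+2)\bigr)^{1-\K}$ has branch points at every zero of $\zeta(s+2)$, i.e.\ at $s=\rho-2$ with $-2<\Re s<-1$ and at $s=-4,-6,\dots$, so the arms of your contour would cross further singularities. Consequently $W$ is not holomorphic in a neighbourhood of $s=-1$, only continuous up to the boundary from the right, and the substitution $s=-1+w/\log M$ with $\Re w<0$ evaluates $W$ where it is undefined.

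The paper's proof is designed precisely to avoid leaving $\Re s>-1$. It integrates by parts $\lceil\K\rceil+\omega_{\h}(a)$ times so that the singular exponent becomes $\K-\lceil\K\rceil-2\in(-3,-2)$, subtracts the explicit model $c(s+1)^{\K+\omega_{\h}(a)-2}$ — to which Lemma \ref{hankel} may be applied, because \emph{that} elementary function does continue off the cut — and then bounds the derivative of the difference $Z_9$ directly on the vertical line $\Re s=-1+1/\log M$, using the local estimate $Z_9^{(\lceil\K\rceil+\omega_{\h}(a))}(s)\ll|s+1|^{\K-\lceil\K\rceil-1}$ together with the decay of Lemma \ref{borne sur d�riv�es de Z} for $|t|\geq 2$. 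If you wish to keep a Hankel-type argument, you must confine it to the model term and estimate the remainder on a contour staying strictly in $\Re s>-1$; as written, your proof does not establish the proposition.
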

\begin{proof}
As in Proposition \ref{k entier}, we need to study the function
$$Z(s)= (s+1)^{\K+\omega_{\h}(a)-2} \frac{\mathfrak{S}_2(s)}{(s+1)^{\omega_{\h}(a)}} \zeta(s+1) ((s+1)\zeta(s+2))^{1-\K}  \frac{Z_5(s)}{s}$$
in the region $\mathcal D : -1 \leq  \Re s \leq -1/2$. This function is holomorphic for $\Re s > -1$ by Lemma \ref{bornes sur Z_5}, and the only point in  $\mathcal D$ where $Z(s)$ is not necessarily locally bounded is $s=-1$. However, the functions
$$ \frac{\mathfrak{S}_2(s)}{(s+1)^{\omega_{\h}(a)}}, \hspace{1cm} \zeta(s+1), \hspace{1cm}  ((s+1)\zeta(s+2))^{1-\K} \hspace{1cm} \text{and} \hspace{1cm} \frac 1s $$
are holomorphic on $\mathcal D$ and do not vanish at $s=-1$. The function $Z_5(s)$ is holomorphic for $\Re s >-1$, all its derivatives are locally bounded around any point of $\mathcal D$, and $Z_5(-1) \neq 0$.
Define
 $$ Z_9(s):=Z(s) - c (s+1)^{\K+\omega_{\h}(a)-2}, $$
where 
$$c:=\lim_{s\rightarrow -1^+} (s+1)^{2-\K-\omega_{\h}(a)}Z(s)\neq 0. $$
We have that
\begin{align}
  \label{hankel equation}
\begin{split}
\frac 1{2\pi i} \int_{(-1/2)}Z(s) M^s ds &=\frac {(-1)^{\lceil \K \rceil + \omega_{\h}(a)}}{2\pi i (\log M)^{\lceil \K \rceil + \omega_{\h}(a)}} \int_{(-1/2)}Z^{(\lceil \K \rceil + \omega_{\h}(a))}(s) M^s ds \\
&= \frac {(-1)^{\lceil \K \rceil + \omega_{\h}(a)}}{2\pi i (\log M)^{\lceil \K \rceil + \omega_{\h}(a)}} \left( \int_{(-1/2)}Z^{(\lceil \K \rceil + \omega_{\h}(a))}_{9}(s) M^s ds  \right. \\
 & \hspace{1cm}\left.+ c\frac{\Gamma(\K+\omega_{\h}(a)-1 )}{ \Gamma( \K-\lceil \K \rceil-1 )} \int_{(-1/2)} (s+1)^{\K-\lceil \K \rceil-2}  M^s ds\right)  \\
& =  \frac cM \frac{(\log M)^{1-\K-\omega_{\h}(a)}}{\Gamma(2-\K-\omega_{\h}(a))}  \\
& \hspace{1cm}  + \frac {(-1)^{\lceil \K \rceil + \omega_{\h}(a)}}{2\pi i (\log M)^{\lceil \K \rceil + \omega_{\h}(a)}}\int_{(-1/2)}Z^{(\lceil \K \rceil + \omega_{\h}(a))}_{9}(s) M^s ds
\end{split}
\end{align}
by Lemma \ref{hankel}.
We will show the bound
\begin{equation} \label{borne sur dérivées de Z_9} Z_9^{(\lceil \K \rceil + \omega_{\h}(a))}(s) \ll |s+1|^{\K-\lceil \K \rceil-1}
 \end{equation}
for $s$ close to $-1$, which will yield (using Lemma \ref{borne sur dérivées de Z})
\begin{align*}
\int_{(-1+\frac 1 {\log M})}Z_9^{(\lceil \K \rceil + \omega_{\h}(a))}(s) M^s ds  & \ll \frac 1 M \left|\int_{-\infty}^{\infty} Z_9^{(\lceil \K \rceil + \omega_{\h}(a))}\left( -1+\frac 1 {\log M} + it \right) M^{it} dt\right| \\
&= \frac 1 M \left|\int_{-2}^{2} Z_9^{(\lceil \K \rceil + \omega_{\h}(a))}\left( -1+\frac 1 {\log M} + it \right) M^{it} dt+O(1)\right|  
  \\ & \ll  \frac 1 M\left( \int_{-2}^{2} \left(\frac 1 {\log M} + |t|\right)^{\K-\lceil \K \rceil-1} dt  + O(1) \right) \\
 & \ll \frac 1 M\left( \int_{0}^{\frac 1{\log M}} (\log M)^{1-\K+\lceil \K \rceil} + \int_{\frac 1 {\log M}} ^2  t^{\K-\lceil \K \rceil-1} dt  + O(1) \right) \\
&\ll \frac{(\log M)^{\lceil \K\rceil- \K }+1}{M} \ll \frac{(\log M)^{\lceil \K\rceil- \K }}{M},
\end{align*}
from which we will conclude using \eqref{hankel equation} that
\begin{align*}
\frac 1{2\pi i} \int_{(-1/2)}Z(s) M^s ds &=
\frac cM\frac{(\log M)^{1-\K-\omega_{\h}(a)}}{\Gamma(2-\K-\omega_{\h}(a))} \left(1+ O\left(\frac 1 {\log M} \right)\right) \\
&= -\mu_{\K}(a,M) \left(1+ O\left(\frac 1 {\log M} \right)\right),
\end{align*}
achieving the proof. Let us now show that \eqref{borne sur dérivées de Z_9} holds. By Lemma \ref{bornes sur Z_5}, the function
$$ Z_{10}(s):= (s+1)^{2-\K-\omega_{\h}(a)} Z(s)$$
as well as its derivatives are locally bounded around $s=-1$. Moreover, applying Lemma \ref{régularité de Z_5 en -1} gives the bound
\begin{equation}
 \label{borne sur Z_10} 
Z_{10}(s) = Z_{10}(-1)+O( |s+1|).
\end{equation}
Now we use Leibniz's formula: 
\begin{align*}
 Z^{(\lceil \K \rceil + \omega_{\h}(a))}(s) &= \left( (s+1)^{\K+\omega_{\h}(a)-2} Z_{10}(s)  \right)^{(\lceil \K \rceil + \omega_{\h}(a))} \\
& = \sum_{i=0}^{\lceil \K \rceil + \omega_{\h}(a)} \binom{\lceil \K \rceil + \omega_{\h}(a)}{i} \left((s+1)^{\K+\omega_{\h}(a)-2}\right)^{(i)} Z_{10}^{(\lceil \K \rceil + \omega_{\h}(a)-i)}(s) \\
& = \left((s+1)^{\K+\omega_{\h}(a)-2}\right)^{(\lceil \K \rceil + \omega_{\h}(a))}Z_{10}(s) + O(|s+1|^{\K-\lceil \K \rceil-1}) \\
& = \left((s+1)^{\K+\omega_{\h}(a)-2}\right)^{(\lceil \K \rceil + \omega_{\h}(a))}Z_{10}(-1) + O(|s+1|^{\K-\lceil \K \rceil-1})
\end{align*}
by \eqref{borne sur Z_10}, so
\begin{align*}
 Z_9^{(\lceil \K \rceil + \omega_{\h}(a))}(s) &= Z^{(\lceil \K \rceil + \omega_{\h}(a))}(s) - c \left((s+1)^{\K+\omega_{\h}(a)-2}\right)^{(\lceil \K \rceil + \omega_{\h}(a))} \\
& = (Z_{10}(-1)- c) \left((s+1)^{\K+\omega_{\h}(a)-2}\right)^{(\lceil \K \rceil + \omega_{\h}(a))} + O(|s+1|^{\K-\lceil \K \rceil-1}) \\
& = O(|s+1|^{\K-\lceil \K \rceil-1}) 
\end{align*}
since $c=Z_{10}(-1)$.

\end{proof}

\begin{lemma}
 \label{partie négligeable}
Assume Hypothesis \ref{hypothèse sur h}. Let $y\geq 1$ be a real number. Then, 
\begin{equation} \frac 1{2\pi i} \int_{(-1/2)}\mathfrak{S}_2(s) \zeta(s+1) \zeta(s+2)^{1-\K} Z_5(s) y^s \frac{ds}{s}  \ll_{\epsilon} y^{-1+\epsilon}.
\label{borne a prouver partie negligeable}\end{equation}
\end{lemma}

\begin{proof}
Define
$$ Z_{\A}(s):= \mathfrak{S}_2(s) \zeta(s+1) \zeta(s+2)^{1-\K} Z_5(s).$$
The goal is to bound the integral
$$ \frac 1{2\pi i} \int_{(-1/2)} Z_{\A}(s) y^s \frac{ds}{s} = \frac 1{2\pi i} \int_{(-1+\epsilon)} Z_{\A}(s) y^s \frac{ds}{s}. $$
We will first show that this integral is $\ll_{\epsilon} y^{-1/2+\epsilon}$ using complex analysis, and then we will see how to improve this bound to $\ll_{\epsilon}y^{-1+\epsilon} $ by elementary means.
In the region $-1+\epsilon< \sigma $, we have the bound
$$ |Z(\sigma+it)| \ll_{\epsilon} |\zeta(\sigma+1+it)| \ll_{\epsilon} (|t|+2)^{\mu(\sigma+1)+\epsilon},$$
where $ \mu(\sigma+1)$ is defined as in Lemma \ref{bornes sur zeta}. Thus we get the bounds
$$ \int_{-1+\epsilon-iT}^{-1+\epsilon+iT} Z(s) y^s \frac{ds}{s}  \ll_{\epsilon} \frac{T^{1/2}}{y^{1-\epsilon}},$$
$$ \int_{-1+\epsilon \pm iT}^{\epsilon \pm iT} Z(s)y^s \frac{ds}{s} \ll_{\epsilon} (Ty)^{\epsilon} \left( \frac 1{T^{5/6} y^{1/2}} + \frac 1{T^{1/2}y} + \frac 1T + \frac 1{T^{5/6} y^{1/2}} \right).  $$
The last integral we need to bound is
\begin{align*}
 \frac 1 {2\pi i} \int_{\Re s =  \epsilon, |\Im s|>T } Z(s) y^s \frac{ds}{s} &= \sum_n \frac{\f_a(n)}{n\gamma(n)} \frac 1{2\pi i} \int_{\Re s = \epsilon, |\Im s|>T } \left(\frac yn\right)^s \frac{ds}{s} \\
&\ll y^{\epsilon} \sum_n \frac{\f_a(n)}{n\gamma(n)} \frac 1{n^{\epsilon} (1+T|\log(y/n)|)}
\end{align*}
by the effective version of Perron's formula (see Théorème II.2.3 of \cite{tenenbaum}). The last sum is
\begin{align*}
& \ll \frac {y^{\epsilon}}{\sqrt T} \sum_{n \leq y\left(1-\frac 1{\sqrt T}\right)} \frac{\f_a(n)}{n\gamma(n)}  + \sum_{  y\left(1-\frac 1{\sqrt T}\right)\leq n \leq y\left(1+\frac 1{\sqrt T}\right)} \frac{\f_a(n)}{n\gamma(n)} +  \frac {y^{\epsilon}}{\sqrt T} \sum_{n \geq y\left(1+\frac 1{\sqrt T}\right)} \frac{\f_a(n)}{n\gamma(n)} \frac 1{n^{\epsilon}}  \\
& \ll \frac {y^{\epsilon}}{\sqrt T} \log y + \frac 1  {\sqrt T} \ll_{\epsilon} \frac {y^{\epsilon}}{\sqrt T} \log y
\end{align*}
by Lemma \ref{borne sur g}. Taking $T=y$ yields that the left hand side of \eqref{borne a prouver partie negligeable} is $\ll_{\epsilon} y^{-1/2+\epsilon}$. We now proceed to show this bound can be improved to $\ll_{\epsilon} y^{-1+\epsilon}$.  
The function $Z_{\A}(s) y^s/s$ has a double pole at $s=0$ with residue equal to $C_1\log y+C_2$, where $C_1$ and $C_2$ are real numbers independent of $y$. By the residue theorem and Mellin inversion,
\begin{align}
\label{retour a la somme arith.}
\begin{split}
 \frac 1{2\pi i} \int_{(-1/2)} Z_{\A}(s) y^s \frac{ds}{s}  &=- C_1\log y-C_2+ \frac 1{2\pi i} \int_{(1)} Z_{\A}(s) y^s \frac{ds}{s} \\
 &=  \sum_{n \leq y} \frac{\f_a(n)}{n \gamma(n)} -C_1\log y-C_2.
\end{split}
\end{align}

Let us give an elementary estimate for the sum appearing on the right hand side of \eqref{retour a la somme arith.}. Define
$$ \nu(n) := \prod_{p\mid n }\frac{1-\h(p)}{p-1}.  $$ 
Using the convolution identity $$\frac 1{\gamma(n)} = \sum_{rs=n} \mu^2(s)\nu(s),$$
we compute
\begin{align}
\begin{split}
 \sum_{n \leq y} \frac{\f_a(n)}{n \gamma(n)}  &= \sum_{s\leq y} \frac{\mu^2(s)\nu(s)}{s} \sum_{r\leq y/s} \frac{\f_a(rs)}{r} =\sum_{s\leq y}\frac{\mu^2(s)\nu(s)}{s}\sum_{(a,s)\mid d \mid a} \f_a(d) \sum_{\substack{r\leq y/s: \\ (a,rs)=d}} \frac 1r \\
&= \sum_{s\leq y}\frac{\mu^2(s)\nu(s)}{s} \sum_{\substack{(a,s)\mid d \mid a  }}\f_a(d) \sum_{\substack{r\leq y/s : \\ \frac{d}{(d,s)}\mid r \\ (a,rs)=d}} \frac 1r \\
&= \sum_{s\leq y}\frac{\mu^2(s)\nu(s)}{s} \sum_{\substack{(a,s)\mid d \mid a  }}\f_a(d) \frac{(d,s)}{d} \sum_{\substack{l\leq \frac{y(d,s)}{ds} : \\  (a/d,ls/(d,s))=1}} \frac 1l \\
&= \sum_{s\leq y}\frac{\mu^2(s)\nu(s)}{s} \sum_{\substack{(a,s)\mid d \mid a: \\ (a/d,s/(d,s))=1 }}\f_a(d) \frac{(d,s)}{d} \sum_{\substack{l\leq \frac{y (d,s)}{ds}:\\ (l,a/d)=1}} \frac 1l \\
&=  \sum_{s\leq y}\frac{\mu^2(s)\nu(s)}{s} \sum_{\substack{(a,s)\mid d \mid a : \\ (a/d,s/(d,s))=1 }}\f_a(d) \frac{(d,s)}{d} \frac{\phi(a/d)}{a/d} \bigg( \log\left(\frac{y (d,s)}{ds} \right)+\gamma  \\
& \hspace{3cm} +\sum_{p\mid a/d} \frac{\log p}{p-1}  + O\left( \frac{ds}{y(d,s)}\right)\bigg).
\end{split}
\label{grosse equation dans calcul elementaire}
\end{align}
Using the bound $ \nu(n) \ll_{\epsilon} n^{-1+\epsilon}, $ which is deduced from Hypothesis \ref{hypothèse sur h}, we get that the error terms sum to $O_{a,\epsilon}(y^{-1+\epsilon})$. Moreover, we can extend the sum over $s\leq y$ to all integers, at the cost of the error term $O_{a,\epsilon}(y^{-1+\epsilon})$. Having done this, \eqref{grosse equation dans calcul elementaire} becomes
\begin{equation}\sum_{n \leq y} \frac{\f_a(n)}{n \gamma(n)} = \tilde{C}_1 \log y+\tilde{C}_2  + O_{a,\epsilon}(y^{-1+\epsilon}),\label{bonne borne dans partie negligeable}\end{equation}
where $\tilde{C}_1$ and $\tilde{C}_2$ are real numbers which do not depend on $y$. Substituting \eqref{bonne borne dans partie negligeable} into \eqref{retour a la somme arith.} and using our previous bound, we get 
$$ (\tilde{C}_1-C_1) \log y+\tilde{C}_2-C_2  + O_{a,\epsilon}(y^{-1+\epsilon}) = \frac 1{2\pi i} \int_{(-1/2)} Z_{\A}(s) y^s \frac{ds}{s}  = O_{a,\epsilon}(y^{-1/2+\epsilon}), $$
which of course implies that $\tilde{C}_1=C_1$ and $\tilde{C}_2=C_2$ since these numbers do not depend on $y$. We conclude from \eqref{retour a la somme arith.} and \eqref{bonne borne dans partie negligeable} that \eqref{borne a prouver partie negligeable} holds.

\end{proof}

\subsubsection{Proof of Proposition \ref{proposition evaluation de la somme arithmetique}}

\begin{proof}[Proof of Proposition \ref{proposition evaluation de la somme arithmetique}]
First we use Lemma \ref{transformée de Mellin} to write

\begin{align*}
 S_5:&=\sum_{1\leq r \leq  \R}\frac{\f_a(r)}{r\gamma(r)}\left( 1-\frac{r}{\R} \right)-\sum_{1\leq r \leq  M}\frac{\f_a(r)}{r\gamma(r)}\left(1- \frac{r}{M} \right) - \sum_{\frac xR< q \leq  \frac x M}\frac{\f_a(q)}{q\gamma(q)} \\
&=\frac 1{2\pi i} \int_{(1)}\mathfrak{S}_2(s) \zeta(s+1) \zeta(s+2)^{1-\K} Z_5(s) \left(\frac {{\R}^s-M^s}{s+1}  +\left(\frac{x}{\R}\right)^s-\left(\frac{x}{M}\right)^s   \right) \frac{ds}s .
\end{align*}

Writing $$\psi(s):=\frac {\R^s-M^s}{s+1}  +\left(\frac{x}{\R}\right)^s-\left(\frac{x}{M}\right)^s, $$ it is trivial that $\psi(0)=0$. Using Taylor series, we have for $s$ close to $0$ that
$$ \psi(s) = (1+O(s)) (s\log(\R/M)+O(s^2)) + s\log(x/\R)-s\log(x/M)+O(s^2),$$
which means that $\psi$ has a double zero at $s=0$. Thus, $$\mathfrak{S}_2(s)\zeta(s+1) \zeta(s+2)^{1-\K} Z_5(s) \frac{\psi(s)}s$$ is holomorphic at $s=0$. Using this fact,
\begin{align*}
S_5 &=  \frac 1{2\pi i} \int_{(-1/2)}\mathfrak{S}_2(s) \zeta(s+1) \zeta(s+2)^{1-\K} Z_5(s) \psi(s) \frac{ds}s
\\ &= \frac 1{2\pi i} \int_{(-1/2)}\mathfrak{S}_2(s) \zeta(s+1) \zeta(s+2)^{1-\K} Z_5(s) (\R^s-M^s) \frac{ds}{s(s+1)} + O_{\epsilon}\left( \left(\frac{\R}{x} \right)^{1-\epsilon} \right)
\end{align*}
by Lemma \ref{partie négligeable}. We conclude using propositions \ref{k entier} and \ref{k non entier} that
\begin{align*}
 S_5 &= \frac{\mu_{\K}(a,M)}{M} \left( 1+O\left( \frac{\log\log M}{\log M}\right) \right) + O_A\left( \frac 1 {M\log^ A M}\right) \\
&\hspace{1cm}-\frac{\mu_{\K}(a,\R)}{\R} \left( 1+O\left( \frac{\log\log \R}{\log \R}\right) \right)+ O_A\left( \frac 1 {\R\log^ A \R}\right)+ O_{\epsilon}\left( \left(\frac{\R}{x} \right)^{1-\epsilon} \right) \\
&= \frac{\mu_{\K}(a,M)}{M} \left( 1+O\left( \frac{\log\log M}{\log M}\right) \right) + O_A\left( \frac 1 {M\log^ A M}\right)
\end{align*}
since $M(x)^{1+\delta} \leq  \LL(x)^{1+\delta} \leq \R(x) \leq \sqrt x $.
\end{proof}

\subsection{Proofs of theorems \ref{theoreme principal} and \ref{theoreme principal}*}

We first define the following counting function, which will come in handy for the proofs of this section:
\begin{equation}\A^*(x;q,a):=\sum_{\substack{|a| < n\leq x \\ n\equiv a \bmod q}} \a(n).
\label{def de A*}
\end{equation}

\begin{proof}[Proof of Theorem \ref{theoreme principal}]
 
Let $1\leq M(x) \leq \LL(x)$ and let $\R=\R(x)$ be as in Hypothesis \ref{bombieri-vinogradov}. We decompose the sum \eqref{equation thm principal} as follows:
\begin{align}
\label{séparation en quatre sommes}
\begin{split}
 \sum_{q\leq \frac xM} &\left( \A(x;q,a)-\a(a)-\frac{\f_a(q)}{q\gamma(q)} \A(x) \right)  = \sum_{q\leq \frac xM} \left( \A^*(x;q,a) -\frac{\f_a(q)}{q\gamma(q)} \A(x) \right)+O(1)\\
  &=\sum_{\frac x{\R} <q \leq x}\A^*(x;q,a)-\sum_{\frac xM <q \leq x}\A^*(x;q,a)  -\A(x)\sum_{ \frac x {\R} < q\leq \frac xM}\frac{\f_a(q)}{q\gamma(q)}  \\ 
& \hspace{4cm}+  \sum_{q\leq \frac x{\R}}\left( \A^*(x;q,a)-\frac{\f_a(q)}{q\gamma(q)}\A(x) \right) +O(1) \\
 &=S_1-S_2-S_3 + S_4+O(1).
\end{split} 
\end{align}
Hypothesis \ref{bombieri friedlander iwaniec} implies the bound $$S_4 \ll \frac{\A(x)}{M(x)^{1+\delta}}.  $$ 
To evaluate the sums $S_1$ and $S_2$ we use the Hooley-Montgomery divisor switching technique (see \cite{hooley}). Setting $n=a+qr$, we have for positive $a$ that
\begin{multline}
 S_2= \sum_{\frac xM <q \leq x}\sum_{\substack{|a|<n\leq x \\ n\equiv a \bmod q}} \a(n) = \sum_{1\leq r<(x-a)\frac Mx} \sum_{\substack{a+ r\frac xM<n\leq x \\ n\equiv a \bmod r}} \a(n) \\= \sum_{1\leq r<(x-a)\frac Mx} \left( \A(x;r,a)-\A\left(a+ r \frac xM;r,a  \right)  \right).
\label{chose a changer si a est negatif}
\end{multline}
Using Hypothesis \ref{bombieri-vinogradov}, we see that there exists $\delta>0$ such that
\begin{align}
\begin{split}
 S_2&=  \sum_{1\leq r<(x-a)\frac Mx}\frac{\f_a(r)}{r\gamma(r)} \left(\A(x)-\A\left(a+r \frac xM\right)  \right)+ O\left(\frac{\A(x)}{\LL(x)^{1+2\delta}} \right) \\
&= \sum_{1\leq r<(x-a)\frac Mx}\frac{\f_a(r)}{r\gamma(r)} \left(\A(x)-\A\left( \frac rM x\right)  \right)+ O\left(\frac{\A(x)}{\LL(x)^{1+\delta}} \right)  \\
&= \A(x) \sum_{1\leq r<(x-a)\frac Mx}\frac{\f_a(r)}{r\gamma(r)} \left(1-\frac{\A\left( \frac rM x\right)}{\A(x)}  \right)+ O\left(\frac{\A(x)}{\LL(x)^{1+\delta}} \right)
\end{split} 
\label{chose a ne pas changer si a est negatif}
\end{align}
by hypotheses \ref{taille de A} and Lemma \ref{borne sur g}. Now, if $a$ were negative, we would have to add an error term of size $\ll \frac{\A(x)}{\LL(x)^{1+\delta}}$ to \eqref{chose a changer si a est negatif} (by Hypothesis \ref{taille de A}), which would yield the same error term in \eqref{chose a ne pas changer si a est negatif}. Using Hypothesis \ref{taille de A} again, \eqref{chose a ne pas changer si a est negatif} becomes
\begin{equation*}
 = \A(x) \sum_{1\leq r<(x-a)\frac Mx}\frac{\f_a(r)}{r\gamma(r)} \left(1-\frac rM  \right)+ O\left(\frac{\A(x)}{\LL(x)^{1+\delta}} \right) .
\end{equation*}
If $M$ is an integer, then the $M$-th term of the sum is $\frac{\f_a(r)}{r\gamma(r)}\left( 1 -\frac MM\right) =0$. If not, the bound $\frac{\f_a(r)}{r\gamma(r)} \ll_{\epsilon} \frac 1{\phi(r)}$ (see Lemma \ref{borne sur g}) implies that this last term is $\ll \A(x) \frac{\log \log M} {M^2}$. Thus, 
$$S_2= \A(x) \sum_{1\leq r \leq  M}\frac{\f_a(r)}{r\gamma(r)} \left( 1- \frac{r}{M} \right)+ O\left(\frac{\A(x)}{M^{1+\delta}}\right) $$
since $M(x) \leq \LL(x).$ A similar calculation shows that $$S_1= \A(x) \sum_{1\leq r \leq  \R(x)}\frac{\f_a(r)}{r\gamma(r)} \left( 1-\frac{r}{\R(x)} \right)+ O\left(\frac{\A(x)}{\LL(x)^{1+\delta}}\right). $$
Grouping terms, \eqref{séparation en quatre sommes} becomes
\begin{align*}
 \sum_{q\leq \frac xM} &\left( \A(x;q,a)-\a(a)-\frac{\f_a(q)}{q\gamma(q)} \A(x) \right)=S_1-S_2-S_3+S_4 +O(1)  \\
&=\A(x) \left(\sum_{1\leq r \leq  \R}\frac{\f_a(r)}{r\gamma(r)}\left( 1-\frac{r}{\R} \right)-\sum_{1\leq r \leq  M}\frac{\f_a(r)}{r\gamma(r)}\left(1- \frac{r}{M} \right) - \sum_{\frac x{\R}< q \leq  \frac x M}\frac{\f_a(q)}{q\gamma(q)}  \right)  \\
& \hspace{3cm}+O\left( \frac {\A(x)}{M^{1+\delta}}\right),
\end{align*}
which combined with Proposition \ref{proposition evaluation de la somme arithmetique} gives 
$$ = \frac{\A(x)}{M} \mu_{\K} (a,M) \left( 1+O\left( \frac{\log\log M}{\log M}\right) \right) +O_A\left( \frac{\A(x)}{M \log^A M} \right), $$
that is
\begin{multline*}
 \sum_{q\leq \frac xM} \left( \A(x;q,a)-\a(a)-\frac{\f_a(q)}{q\gamma(q)} \A(x) \right)  \\
 =  \frac {\A(x)}M \left( \mu_{\K}(a,M)\left( 1+O\left( \frac{\log\log M}{\log M}\right) \right) +O_A \left( \frac 1 {\log^ A M}\right)\right) .
\end{multline*}
\end{proof}

\begin{proof}[Proof of Theorem \ref{theoreme principal}*]
 
Let $1\leq M(x) \leq \LL(x)$ and let $\R=\R(x)$ be as in Hypothesis \ref{bombieri friedlander iwaniec}. We decompose the sum \eqref{equation thm principal dyadique} as follows:
\begin{align}
\begin{split}
 \sum_{ \frac x{2M}< q\leq \frac xM} &\left( \A(x;q,a)-\a(a)-\frac{\f_a(q)}{q\gamma(q)} \A(x) \right)  = \sum_{ \frac x{2M}< q\leq \frac xM} \left( \A^*(x;q,a) -\frac{\f_a(q)}{q\gamma(q)} \A(x) \right)+O(1)\\
  &=\sum_{\frac x{2M} <q \leq x}\A^*(x;q,a)-\sum_{\frac xM <q \leq x}\A^*(x;q,a) -\A(x)\sum_{ \frac x {2M} < q\leq \frac xM}\frac{\f_a(q)}{q\gamma(q)} +O(1) \\
 &=S_1-S_2-S_3 +O(1).
\end{split} 
\label{séparation en quatre sommes dyadique}
\end{align}
Arguing as in the proof of Theorem \ref{theoreme principal}, we set $n=a+qr$ to get that for positive $a$,
\begin{align*}
 S_2 &= \sum_{1\leq r<(x-a)\frac Mx} \left( \A(x;r,a)-\A\left(a+ r \frac xM;r,a  \right)  \right) \\
&= \A(x) \sum_{1\leq r<(x-a)\frac Mx}\frac{\f_a(r)}{r\gamma(r)} \left(1-\frac{\A\left( \frac rM x\right)}{\A(x)}  \right)+ O\left(\frac{\A(x)}{\LL(x)^{1+\delta}} \right)  \\
&=\A(x) \sum_{1\leq r \leq  M}\frac{\f_a(r)}{r\gamma(r)} \left( 1- \frac{r}{M} \right)+ O\left(\frac{\A(x)}{M^{1+\delta}}\right) 
\end{align*}
by Hypotheses \ref{bombieri-vinogradov}*, \ref{taille de A} and Lemma \ref{borne sur g}. Now, if $a$ were negative, we would have to add a negligible contribution. Thus, \eqref{séparation en quatre sommes dyadique} becomes
\begin{align*}
 \sum_{\frac x{2M}< q\leq \frac xM} &\left( \A(x;q,a)-\a(a)-\frac{\f_a(q)}{q\gamma(q)} \A(x) \right)=\A(x) \bigg(\sum_{1\leq r \leq  2M}\frac{\f_a(r)}{r\gamma(r)}\left( 1-\frac{r}{2M} \right) \\
&-\sum_{1\leq r \leq  M}\frac{\f_a(r)}{r\gamma(r)}\left(1- \frac{r}{M} \right) - \sum_{\frac x{2M}< q \leq  \frac x M}\frac{\f_a(q)}{q\gamma(q)}  \bigg)+O\left( \frac {\A(x)}{M^{1+\delta}}\right).
\end{align*}
Going through the proof of Proposition \ref{proposition evaluation de la somme arithmetique}, we see that this is 
\begin{multline*} = \frac{\A(x)}{M} \mu_{\K} (a,M) \left( 1+O\left( \frac{\log\log M}{\log M}\right) \right)- \frac{\A(x)}{2M} \mu_{\K} (a,2M) \left( 1+O\left( \frac{\log\log M}{\log M}\right) \right) \\
 +O_A\left( \frac{\A(x)}{M \log^A M} \right),
\end{multline*}

that is
\begin{multline*}
 \sum_{\frac x{2M}< q\leq \frac xM} \left( \A(x;q,a)-\a(a)-\frac{\f_a(q)}{q\gamma(q)} \A(x) \right)   \\
=  \frac {\A(x)}{2M} \left( \mu_{\K}(a,M)\left( 1+O\left( \frac{\log\log M}{\log M}\right) \right) +O_A \left( \frac 1 {\log^ A M}\right)\right),
\end{multline*}
since by the definition of $\mu_{\K}(a,M)$, 
\begin{equation*} 2\mu_{\K}(a,M)-\mu_{\K}(a,2M) = \mu_{\K}(a,M) \left( 1+O\left( \frac 1{\log M}\right)\right).
\end{equation*}

\end{proof}

\section{Further Proofs}
\label{section preuve des exemples}
In this section we prove the results of Section \ref{section exemples}.

\begin{proof}[Proof of Theorem \ref{resultat premiers}]

Put
$$ a(n):= \Lambda(n), $$
which gives $\A(x)=\psi(x)$ and $\A(x;q,a)=\psi(x;q,a)$. Define 
$$\f_a(q):=\begin{cases}
		1 &\text{ if } (a,q)=1 \\                                                                            
		0 &\text{ otherwise, }        
\end{cases}$$
and $\gamma(q):= \frac{\phi(q)} q$. Define also the multiplicative function $\h(d)$ by $\h(1)=1$, and $\h(d)=0$ for $d>1$. The prime number theorem in arithmetic progressions gives the asymptotic
$$ \A(x;q,a) \sim   \frac{\f_a(q)}{q \gamma(q)} \A(x), $$
for any fixed $a$ and $q$ such that $(a,q)=1$. Now let us show that the hypotheses of Section \ref{hypothèses} hold. Fix $A>0$ and put $\LL(x):= (\log x)^A$, $\R(x):=x^{1/2}(\log x)^{-B(A)}$, where $B(A):=A+5$. Hypothesis \ref{bombieri-vinogradov} is the Bombieri-Vinogradov theorem. Hypothesis \ref{taille de A} follows from the prime number theorem. As $\h(p)=\K=0$, Hypothesis \ref{hypothèse sur h} is trivial. Hypothesis \ref{bombieri friedlander iwaniec} follows from Theorem 9 of \cite{BFI}.

We now compute $\mu_{\K}(a,M)$. As $\h(p^e)=0$, we have $\omega_{\h}(a)=\omega(a)$, the number of prime factors of $a$. Thus, Remark \ref{remarque k=0} gives

$$\mu_0(a,M)=\begin{cases}
-\frac {1} 2  \log M &\text{ if } a= \pm 1,    \\
		-\frac 12 \left( 1-\frac 1p\right) \log p  &\text{ if } a= \pm p^e \\
		0 &\text{ if } \omega(a) \geq 2     
\end{cases}$$
so an application of Theorem \ref{theoreme principal} gives the result with a weaker error term. A better version of Proposition \ref{proposition evaluation de la somme arithmetique} follows from Huxley's subconvexity result \cite{huxley}, yielding the stated error term (see \cite{fiorilli} for a more precise proof).
 \end{proof}

\begin{proof}[Proof of Theorem \ref{resultat formes quad generales}]

Let $Q(x,y):=\alpha x^2+\beta xy + \gamma y^2$ be a binary quadratic form, where $\alpha,\beta$ and $\gamma$ are integers such that $\alpha>0$, $(\alpha,\beta,\gamma)=1$ and $d:=\beta^2-4\alpha\gamma < 0$ (so $Q(x,y)$ is positive definite). Note that the set of $d$ for which $d \equiv 1,5,9,12,13 \bmod 16$ includes a large subset of all fundamental discriminants. 
The set of bad primes is $\S :=  \{p: p \mid 2d \}$ in this case. Since $\S \neq \emptyset$, we will need to modify the proof of Theorem \ref{theoreme principal}. We define
$$ \chi_d := \left( \frac{4d}{\cdot} \right).$$ 
Note that for $(n,2d)=1$, we have the equalities
\begin{equation} r_d(n)=\sum_{m\mid n} \chi_d(m) = \prod_{\substack{p^k\parallel n: \\ \chi_d(p)=1}} (k+1)\prod_{\substack{p^k\parallel n: \\ \chi_d(p)=-1, \\ k \text{ odd}}}0.
\label{decomposition de r_d(n)}
\end{equation}
\label{sous-section r(n)}
An intuitive argument suggests that
\begin{equation*} \A(x;q,a) \sim \frac{R_a(q)}{q^2} \A(x), \end{equation*}
where
\begin{equation} R_a(q) := \#\{ 1\leq x,y \leq q : Q(x,y) \equiv a \bmod q \}. \label{définition de R_a}\end{equation}
As this is a classical result, we leave its proof, as well as several other classical facts about binary quadratic forms, to Appendix \ref{section généralités sur formes quadratiques}.
The function 
$$ \g_a(q):= \frac {R_a(q)}{q^2}$$
is actually multiplicative (see Lemma \ref{multiplicativité de R_a(q)}), and Lemma \ref{lemme evaluation de R_a forme quad generale} shows that for $p\nmid 2d$, $\g_a$ is given as in \eqref{definition de g avec h} with
$$ \h(p^e):= \begin{cases}
    	1+e\left(1-\frac 1 p\right) &\text{ if } \chi_d(p)=1 \\
		\frac 1p &\text{ if } \chi_d(p)=-1 \text{ and } 2 \nmid e \\
		1 & \text{ if } \chi_d(p)=-1 \text{ and } 2 \mid e,
            \end{cases}
 $$
and for $p\mid 2d$, $R_a(p^e)$ is given as in \eqref{determination de R_a(p^e)} and \eqref{determination de R_a(2^e)}. Since we are looking at large moduli, we need to use a result of Plaksin (Lemma 8 of \cite{plaksin2}), which asserts that 
\begin{equation}
\label{plaksin general}
\A(x;q,a) =  \g_a(q) \A(x) + E(x,q),
\end{equation}
where $E(x,q)\ll_{a,\epsilon} (x/q)^{\frac 34+\epsilon}$ if $q \leq x^{\frac 13}$, and $E(x,q)\ll_{a,\epsilon} x^{\frac 23+\epsilon}q^{-\frac 12}$ if $x^{\frac 13}<q \leq x^{\frac 23}$. Summing \eqref{plaksin general} over $q\leq x^{\frac 12}$, we get that the hypotheses \ref{bombieri-vinogradov} and \ref{bombieri friedlander iwaniec} hold with $\R(x):= x^{\frac 12}$ and $\LL(x):= x^{\lambda}$, provided $\lambda<\frac 1{12}$. (Note that in the case $\beta=0$, we can take the wider range $\lambda <\frac 18$, using Lemma 20 of \cite{plaksin}.)
Hypothesis \ref{taille de A} follows from Gauss' estimate:
$$ \A(x) = A_{Q} x + O(x^{\frac 12}), $$
where $A_{Q}$ is the area of the region $\{(x,y) \in \mathbb R_{\geq 0}^2 : Q(x,y) \leq 1\}$.
Let us turn to Hypothesis \ref{hypothèse sur h}. For $p\nmid 2d$, 
$$\h(p) = \begin{cases}
          2-\frac1p & \text{ if } \chi_d(p)=1 \\
          \frac1p & \text{ if } \chi_d(p)=-1, \\
         \end{cases}
 $$
so we set $\K:=1$ and
$$  \sum_{p\notin \S} \frac{\h(p)-\K}{ p} = \sum_{p\nmid 2d} \frac{\chi_{-d}(p)}{ p}+O(1) < \infty$$
by the prime number theorem for $\psi(x,\chi_{-d})$ (see \cite{davenport}). Moreover,
\begin{align} 
\begin{split}\sum_{p\notin \S} \frac{(\h(p)-\K) (\log p)^{n+1}}{p^{1+it}}  &= O(1) + (-1)^{n+1}\left(\frac{L'}{L} \right)^{(n)}(1+it, \chi_{-d})  \\
 & \ll_{d,n} (\log(|t|+2))^{n+2},
\end{split}  
\label{borne sur L(schi)}
\end{align}
this last bound following from Cauchy's formula for the derivatives combined with the classical bound for $\frac{L'(s,\chi)}{L(s,\chi)}$ in a zero-free region (see Chapter 19 of \cite{davenport}). As in the proof of Théorème II.3.22 of \cite{tenenbaum}, we can deduce from \eqref{borne sur L(schi)} that (setting $\eta:= 1/\log^2(|t|+2)$)
\begin{align*} 
\sum_{p\notin \S} \frac{\h(p)-\K }{p^{1+it}}  + O(1)&= \log L(1+it, \chi_{-d}) = \int_{1+it+\eta}^{1+it} \frac{L'(s, \chi_{-d})}{L(s, \chi_{-d})} ds+ \log L(1+it+\eta, \chi_{-d}) \\
 &\ll \eta \log^2(|t|+2) + \log \zeta(1+\eta) =2 \log\log(|t|+2)+O(1).
\end{align*}

Having proven hypotheses \ref{bombieri-vinogradov}, \ref{taille de A}, \ref{hypothèse sur h} and \ref{bombieri friedlander iwaniec}, we now proceed to prove an analogue of Theorem \ref{theoreme principal} (since the set $\S$ is non-empty). In the proof of Lemma \ref{transformée de Mellin}, we need to change the definition of $\mathfrak S_2(s)(=\mathfrak S_1(s))$ to (remember that $(a,2d)=1$)
\begin{multline*}\mathfrak S_2(s)= \left( \left(1-\frac 1{2^{s+1}}\right)\left(1+\frac{R_a(2)}{2^{s+2}}\right) + \frac{R_a(4)}{4}\frac 1{2^{2s+2}}\right)\prod_{\substack{p\mid d \\ p\neq 2}}\left( 1-\frac 1{p^{s+1}} + \frac{R_a(p)}{p^{s+2}}\right)  \\
\times \prod_{\substack{p^f \parallel a \\ f\geq 1 \\ p\notin \S }} \left[ \left( 1+\frac{\h(p)}{p^{s+1}}+...+\frac{\h(p^{f})}{p^{f(s+1)}} \right) \left(1-\frac 1{p^{s+1}} \right) + \frac{ \h(p^f)-\h(p^{f+1})/p}{1-1/p } \frac 1 {p^{(f+1)(s+1)}} \right],
\end{multline*}
(We also need to change the condition on the product defining $Z_5(s)$ to $p\nmid 2ad$) so
\begin{align*} \mathfrak S_2(-1) &= \frac{R_a(4)}{4}  \prod_{\substack{p\mid d \\ p\neq 2}} \frac{R_a(p)}{p}\prod_{\substack{p^f \parallel a \\ f\geq 1 \\ p\notin \S }} \frac{\h(p^f)-\h(p^{f+1})/p}{1-1/p} \\
&=  \frac{R_a(4)}{4}  \prod_{\substack{p^f \parallel d \\ p\neq 2}} \frac{R_a(p^f)}{p^f}\prod_{\substack{p^f\parallel a: \\ \chi_d(p)=1}}\left( 1-\frac 1p\right) (f+1)\prod_{\substack{p^f\parallel a: \\ \chi_d(p)=-1, \\ f \text{ even}}}\left( 1+\frac 1p\right) \prod_{\substack{p^f\parallel a: \\ \chi_d(p)=-1, \\ f \text{ odd}}}0 \\
&=\frac{R_a(4d)}{4d}  \prod_{p\mid a}\left( 1-\frac{\chi_d(p)}p\right)r_d(|a|),
\end{align*}
by \eqref{decomposition de r_d(n)} and Lemma \ref{lemme evaluation de R_a forme quad generale}.
We conclude that Theorem \ref{theoreme principal} holds with
$$ \mu_1(a,M)=-\frac{R_a(4d)}{4d}\cdot\frac{r_d(|a|)}{2L(1,\chi_d)}, $$
which gives the result (with a weaker error term) by Dirichlet's class number formula. To get the better error term $O_{\epsilon}\left( \frac 1{M^{1/3-\epsilon}}\right)$, one has to get a better estimate in Proposition \ref{proposition evaluation de la somme arithmetique}. To do this, we go back to the proof of Proposition \ref{k entier} and remark that (with the notation introduced there) 
$$ Z_5(s) = \prod_{p\nmid 2ad} \left(1-\frac{\chi_d(p)}{p^{s+2}} \right),$$
so
$$ Z(s)= \frac{\mathfrak S_3(s) \zeta(s+1) L(s+2,\chi_d)}{s(s+1)},$$
where 
$$ \mathfrak S_3(s) := \mathfrak S_2(s)\prod_{p\mid 2ad} \left(1-\frac{\chi_d(p)}{p^{s+2}} \right)^{-1}.$$
Since $Z(s)$ is a meromorphic function on the whole complex plane, we can shift the contour of integration to the left until the line $\Re (s)=-\frac 43+\epsilon$. A standard residue calculation combined with the convexity bound on $L(s,\chi_d)$ gives
$$\frac 1{2\pi i} \int_{(-1/2)}\frac{\mathfrak S_3(s) \zeta(s+1) L(s+2,\chi_d)}{s(s+1)} M^s ds = -\frac{\mu_1(a,M)}{M}+O_{\epsilon}\left( \frac 1{M^{4/3-\epsilon}}\right),$$ 
from which we conclude the result.
 \end{proof}

\begin{proof}[Proof of Theorem \ref{resultat dyadique poids 1}]
Set $\S:=\{2\}$, $\K:=\frac 12$ and $\LL(x):= (\log x)^{\lambda}$ with $\lambda < 1/5$. We first prove Hypothesis \ref{taille de A} using a refinement of a theorem of Landau. We have
\begin{equation} \A(x) = C \frac{x}{\sqrt{\log x}}\left(1+ O\left( \frac x{\log x} \right)\right),\label{Landau sans poids} 
\end{equation}
with
$$ C:=\frac 1{\sqrt 2} \prod_{p\equiv 3  \bmod 4} \left(1-\frac 1{p^2} \right)^{-\frac 12}.$$ 
(See for instance Exercice 240 of \cite{tenenbaum}). The distribution of $\A$ in the arithmetic progressions $a\bmod q$ with $(a,q)=1$ is uniform, however a result of the strength of Plaksin's \eqref{plaksin general} is far from being known. The best result so far for individual values of $q$ (in terms of uniformity in $q$) is due to Iwaniec \cite{iwaniechalf}, which proved using the semi-linear sieve that if $(a,q)=1$ and $a\equiv 1 \bmod (q,4)$, then
\begin{equation} \A(x;q,a) = \frac{(2,q)}{(4,q)q\gamma(q)} \A(x) \left( 1+O\left( \frac{\log q}{\log x}\right)^{1/5}\right),
\label{iwaniec theoreme} 
\end{equation}
where
$$ \gamma(q):=\prod_{\substack{p \mid q \\ p\equiv 3 \bmod 4}} \left(1+\frac 1p \right)^{-1}.$$
An easy computation using the arithmetic properties of $\a(n)$ shows that
$$ \A_{p^e}(x)= \begin{cases}
  \A\left( \frac x{p^{e+1}}\right) & \text{ if } p\equiv 3 \bmod 4 \text{ and } 2\nmid e \\
\A\left( \frac x{p^{e}}\right) & \text{ otherwise,}               
               \end{cases}
$$
and more generally,
\begin{equation} \A_d(x)=\A\left( \frac{\h(d)}{d} x \right),
\label{linearite de A_d}
\end{equation}
 with 
$$ \h(p^e) := \begin{cases}
  \frac 1p & \text{ if } p\equiv 3 \bmod 4 \text{ and } 2\nmid e \\
1 & \text{ otherwise.}               
               \end{cases}$$
This confirms that our choice of $\K=\frac 12$ was good, and Hypothesis \ref{hypothèse sur h} follows as in the proof of Theorem \ref{resultat formes quad generales}. Moreover, \eqref{iwaniec theoreme} can be extended to $(a,q)=d$ for any fixed odd integer $d>1$, by using the identity $ \A(x;q,a)=\A\left(\frac{\h(d)}{d} x;\frac qd,\frac ad\right)$, hence Hypothesis \ref{bombieri-vinogradov}* holds.
As we have shown every hypothesis, we turn to the calculation of the average $\mu_{\frac 12}(a,M)$ (which is never zero since $\K\notin \mathbb Z$). We need to modify the definition of $\mathfrak S_2(s)$, changing the local factor at $p=2$ to
$$ \left( 1-\frac 1{2^{s+2}}\right)^{1/2} \left( 1-\frac 1{2^{2s+2}} + \frac 1{2^{2s+3}}\right). $$
Doing so and proceeding as in the proof of Theorem \ref{theoreme principal}*, we get the result. 
\end{proof}

\begin{lemma}
\label{lemme k-tuple modifié}
 Suppose that $\mathcal H=\{a_1n+b_1,\dots a_kn+b_k\}$ is an admissible $k$-tuple of linear forms and $q,a$ are two integers such that $(q,a_ia+b_i)=1$ for $1\leq i\leq k$. Then the modified $k$-tuple $\tilde{\mathcal H}:=\{a_1(qm+a)+b_1,\dots a_k(qm+a)+b_k\}$ is also admissible. Moreover, 
$$ \mathfrak S(\tilde{\mathcal H}) = \prod_{p\mid q} \left( 1-\frac{\nu_{\mathcal H}(p)}{p}\right)^{-1} \mathfrak S(\mathcal H).$$
\end{lemma}
\begin{proof}
 First, since $\mathcal H$ is admissible, we have $(a_i,b_i)=1$ for $1\leq i\leq k$. Fix a prime $p$. We need to show that $\nu_{\tilde{\mathcal H}}(p)<p$. 
For a fixed $i$ we have either $p\mid a_i$, in which case $p\nmid b_i$ so $a_in+b_i\not \equiv 0 \bmod p$, or $p\nmid a_i$, in which case the only solution to $a_in+b_i\equiv 0 \bmod p$ is $n\equiv -a_i^{-1}b_i$. Hence, if $p\nmid a_i$, then there are only $\nu_{\mathcal H}(p)<p$ distinct possible values for $-a_i^{-1}b_i \bmod p$, thus regrouping these we can write
$$ \prod_{i=1}^k(a_in+b_i) \equiv C\prod_{i:p\mid a_i}b_i \prod_{j=1}^{\nu_{\mathcal H}(p)} (n+k_j)^{e_j} \bmod p,$$
where the $k_j$ are distinct integers, $e_j\geq 1$ and $p\nmid C$. Using this and the fact that $(a_i,b_i)=1$, we get 
$$ \prod_{i=1}^k(a_i(qm+a)+b_i) \equiv D \prod_{j=1}^{\nu_{\mathcal H}(p)} (qm+a+k_j)^{e_j} \bmod p,$$
with $p\nmid D$. If $p\nmid q$, then this has exactly $\nu_{\mathcal H}(p)<p$ solutions, therefore $\nu_{\tilde{\mathcal H}}(p)<p$. Otherwise, this becomes
$$ \prod_{i=1}^k(a_i(qm+a)+b_i) \equiv \prod_{i=1}^k(a_ia+b_i) \not \equiv 0 \bmod p$$
since $(q,a_ia+b_i)=1$ for $1\leq i\leq k$. We conclude that $\tilde{\mathcal H}$ is admissible. The calculation of $\mathfrak S(\tilde{\mathcal H})$ follows easily.
\end{proof}

\begin{proof}[Proof of Theorem \ref{resultat ktuplets}]
Define $\S:=\emptyset$ and
$$ \a(n):= \prod_{\Ll \in \mathcal H} \Lambda(\Ll(n)) = \Lambda(a_1n+b_1) \Lambda(a_2n+b_2) \cdots \Lambda(a_k n+b_k).$$
In our context, some assumptions of Section \ref{cadre} do not hold. The reason is that the asymptotic for $\A(x;q,a)$ depends on $(q,\P(a;\mathcal H))$ rather than depending only on $(q,a)$. 
The correct conjecture in this case is that for integers $a$ and $q$ such that $(q,\P(a;\mathcal H))=1$, (see \cite{kawada}\footnote{Kawada imposes the additional condition that $R(\mathbf b):=\prod_{j=1}^k |a_j| \prod_{1\leq i,j\leq k} |a_ib_j-a_jb_i|$ is non-zero. However, we assume that our linear forms are admissible, distinct and $a_i\geq 1$; one can show that this implies $R(\mathbf b)\neq 0$.})
$$ \A(x;q,a) \sim  \frac{\A(x)}{ q\gamma(q)} , $$
with
$$ \gamma(q):= \prod_{p\mid q} \left( 1- \frac {\nu_{\mathcal H} (p)}p \right).$$
This actually follows from the Hardy-Littlewood conjecture, by taking the modified $k$-tuple of linear forms $\tilde{\mathcal L}_i(m):=a_i(qm+a)+b_i=qa_im+aa_i+b_i$, which is admissible if $\mathcal H$ is and $(q,\P(a;\mathcal H))=1$ (see Lemma \ref{lemme k-tuple modifié}). Using this idea, we get that the assumption of \eqref{Hardy-Littlewood} holding uniformly for $|a_i|\leq \LL(x)^{1+\delta}$ implies Hypothesis \ref{bombieri-vinogradov}*. We now prove an analogue of Proposition \ref{proposition evaluation de la somme arithmetique}.
Defining $$ Z_{\mathcal H}(s):= \sum_{n} \frac{\f_a(n)}{ n^{s+1}\gamma(n)}, $$
where
$$\f_a(q) := \begin{cases}
              1 &\text{ if } (\mathcal \P(a;\mathcal H),q)=1  \\
		0 & \text{ otherwise, }
             \end{cases}
$$ one can compute that
\begin{align*}  Z_{\mathcal H}(s) =\mathfrak S_2(s) \zeta(s+1) \zeta(s+2)^k Z_{0}(s) 
\end{align*}
with $$ \mathfrak S_2(s) := \prod_{p \mid \mathcal \P(a;\mathcal H)} \left( 1-\frac 1{p^{s+1}} \right) \left( 1+\frac{\nu_{\mathcal H} (p)}{p-\nu_{\mathcal H} (p)} \frac 1{p^{s+1}}\right)^{-1}$$
and
$$ Z_{0}(s):= \prod_p \left( 1+\frac{\nu_{\mathcal H} (p)}{p-\nu_{\mathcal H} (p)} \frac 1{p^{s+1}}\right) \left( 1-\frac 1{p^{s+2}}\right)^k$$
which converges for $\Re s> -3/2$. Note that $Z_{\mathcal H}(s)$ has a simple pole at $s=0$. Also, $\mathfrak S_2(s)$ has a zero of order $\omega(\P(a,\mathcal H))$ at the point $s=-1$, and $Z_0(-1)= \mathfrak S (\mathcal H)^{-1}$, so $Z_{\mathcal H}(s)$ is of order $\omega(\P(a,\mathcal H))-k$ at this point. The function
$$\psi(s):=\frac {(2M)^s-M^s}{s+1}  +\left(\frac{x}{2M}\right)^s-\left(\frac{x}{M}\right)^s $$
vanishes to the second order at $s=0$. Combining all this information, we obtain by shifting the contour of integration to the left that
$$ \frac 1{2\pi i} \int_{(1)} Z_{\mathcal H}(s) \psi(s) \frac{ds}s =  \frac 1{2M} \left( \mu_{1-k}(a,M)(1+o(1)) + O\left( \frac 1{M^{\delta_k}}\right) \right),$$
where
$$ \mu_{1-k}(a,M):=         \begin{cases} \displaystyle        
 -\frac{1}{2\mathfrak S(\mathcal H)} \frac{(\log M)^{k-\omega(\P(a;\mathcal H))}}{(k-\omega(\P(a;\mathcal H)))!} \prod_{p\mid \P(a;\mathcal H)}  \frac{p-\nu_{\mathcal H}(p)}p \log p & \text{ if } \omega(\P(a;\mathcal H)) \leq k \\
0 & \text{ otherwise, }
                                
                               \end{cases}
         $$
and $\delta_k>0$ is a small real number (one can take $\delta_k=\frac 1{2+k}$). 
We conclude by proceeding as in the proof of Theorem \ref{theoreme principal}*. 

\end{proof}

In the case of twin primes (that is $\a(n):=\Lambda(n)\Lambda(n+2)$), we give an explicit description of all integers $a \geq -1$ (without loss of generality since $-a(-a+2) = a(a-2)$) for which $\mu_{-1}(a,M)\neq 0$ (note the occurrence of Mersenne and Fermat primes):

\vspace{.5cm}
\begin{center}
\begin{tabular}{|c|c|c|}
\hline
$a$ & $a(a+2)$ & $\omega(a(a+2))$ \\
\hline
-1 & -1 & 0 \\
1 & 3 & 1\\
2 & 8 & 1 \\
$p^e, p\neq 2 : p^e+2=q^f$ & $p^e q^f$ & 2 \\
$2^e : 2^{e-1}+1=q^f$ & $2^{e+1}(2^{e-1}+1) $ & 2\\
$2^{e}-2 : 2^{e-1}-1=q^f$ & $2^{e+1}(2^{e-1}-1)$ & 2  \\
\hline
\end{tabular}
\end{center}

\vspace{0.5cm}

\begin{proof}[Proof of Theorem \ref{resultat nombres rough}]
Define $\S:=\emptyset$ and $\LL(x):=(\log x)^{1-\delta}$. We split the proof in two cases, depending on the size of $y$.

\textbf{Case 1:} $\log y \leq (\log M)^{\frac 12-\delta}$. 
The fundamental lemma of combinatorial sieve (see \cite{de bruijn}) gives the following estimate, in the range $2\leq y \leq x^{o(1)}$:
\begin{equation} \A(x,y)=  x \prod_{p\leq y} \left( 1-\frac 1p\right) \left( 1+E(x,y)\right), 
\label{lemme fondamental du crible}
\end{equation}
where $E(x,y)\ll x^{-\frac 13}$ for $2\leq y < \frac{(\log x)^2}{16}$, and $E(x,y)\ll u^{-u} (\log y)^3$ for $\frac{(\log x)^2}{16} \leq y \leq x$, with the usual notation $u:=\frac{\log x}{\log y}$ (so $y^u=x$). This shows that Hypothesis \ref{taille de A} holds. One shows that
$$ \A_{d}(x,y):=\sum_{\substack{n\leq x \\ d\mid n}} \a_y(n) = \begin{cases}
                 \A\left( \frac x{d},y\right) &\text{ if } p\mid d \Rightarrow p\geq y \\
		 0 &\text{ else,}
                \end{cases}
$$
so we have $\A_{d}(x,y)= \A\left( \frac{\h_y(d)}{d} x,y\right)$, where
$$ \h_y(d):= \begin{cases}
                 1 &\text{ if } p\mid d \Rightarrow p\geq y \\
		 0 &\text{ else.}
                \end{cases}
$$
Wolke \cite{wolke} as shown a Bombieri-Vinogradov theorem for this sequence, which states that for any $A>0$, there exists $B=B(A)$ such that for any $Q\leq x^{\frac 12}/\log^B x$, we have, uniformly in the range $y\leq \sqrt x$,
\begin{equation} \sum_{q\leq Q} \max_{(a,q)=1} \max_{z\leq x} \left| \sum_{\substack{n\leq z \\ n \equiv a\bmod q}} \a_y(n)- \frac 1{\phi(q)} \sum_{\substack{n\leq z \\ (n,q)=1}} \a_y(n) \right|  \ll \frac x{\log^A x}.
\label{BV pour rough}
 \end{equation}
(Notice that if $(a,q)>1$, then $\A(x,y;q,a)$ is bounded.) We will only use this for $Q=2\LL(x)$, so from now on we suppose that $q\leq 2(\log x)^{1-\delta}$. Arguing as in Section \ref{cadre}, we have for $\frac x{2\LL(x)}\leq z\leq x$ that
\begin{align*}
 \frac 1{\phi(q)} \sum_{\substack{n\leq z \\ (n,q)=1}} \a_y(n) &=  \frac 1{\phi(q)} \sum_{d\mid q} \mu(d) \A_{d}(z,y) 
= \frac 1{\phi(q)} \sum_{d\mid q} \mu(d) \A\left( \frac{\h_y(d)}{d}z,y\right)  \\
&= \frac{\A(z,y)}{\phi(q)}  \sum_{d\mid q} \frac{\h_y(d)\mu(d) }{d}\left(1 + E_{d;q}(z,y)\right)
=  \frac{\A(z,y)}{q\gamma_y(q) }\left(1 + O(x^{-\frac 13+o(1)})\right),
\end{align*}
since by \eqref{lemme fondamental du crible}, in the range $d\leq q \leq (\log x)^{1-\delta}$ we have
\begin{equation*} E_{d;q}(z,y)\ll \left(\frac dz \right)^{\frac 13} \ll x^{-\frac 13+o(1)}.
\end{equation*}
Summing this over $q\leq 2\LL(x)$ and using \eqref{BV pour rough}, we get that 
\begin{equation} \sum_{q\leq 2\LL(x)} \max_{(a,q)=1} \max_{\frac x{2\LL(x)}\leq z\leq x} \left| \A(z,y;q,a)- \frac {\A(z,y)}{q\gamma_y(q)} \right|  \ll \frac {\A(x,y)}{\LL(x)^{1+\delta}}.
\label{BV a devenir! rough}
 \end{equation}
Having a Bombieri-Vinogradov theorem in hand, we now prove an analogue of Proposition \ref{proposition evaluation de la somme arithmetique}. A straightforward computation shows that
\begin{align} 
\label{nombres rough repr 1}
Z_{\A}(s):= \sum_{\substack{n\geq 1\\(n,a)=1}} \frac{1}{n^{s+1}\gamma_y(n)} &= \zeta(s+1)\prod_{p\mid a} \left(1-\frac 1{p^{s+1}} \right) \prod_{\substack{p\nmid a \\ p< y}} \left(1+\frac 1{(p-1)p^{s+1}} \right) \\
\label{nombres rough repr 2}
&=\mathfrak S_a(s)\zeta(s+1)\zeta(s+2)Z_{11}(s) \prod_{\substack{p\geq y}} \left(1+\frac 1{(p-1)p^{s+1}} \right)^{-1},
\end{align}
where 
$$ \mathfrak S_a(s):= \prod_{p\mid a} \left(1-\frac 1{p^{s+1}} \right) \left( 1+\frac{1}{(p-1)p^{s+1}}\right)^{-1},$$
$$ Z_{11}(s):=\prod_p \left( 1+\frac 1{(p-1)p^{s+2}}-\frac 1{(p-1)p^{2s+3}}\right).$$
We will now use representation \eqref{nombres rough repr 1}. Representation \eqref{nombres rough repr 2} will be useful for larger values of $y$, since then $\prod_{p<y}\left(1-\frac 1{p^{s+2}}\right)^{-1}$ behaves like $\zeta(s+2)$ on the line $\Re (s)=-1+\frac 1{\log M}$. Note that by \eqref{nombres rough repr 1}, $Z_{\A}(s)$ is defined on the whole complex plane, except at $s=0$. As before, we need to compute the integral
$$ I:=\frac 1{2\pi i} \int_{(2)} Z_{\A}(s)\psi(s) \frac{ds}s,$$
where $$\psi(s):=\frac {(2M)^s-M^s}{s+1}  +\left(\frac{x}{2M}\right)^s-\left(\frac{x}{M}\right)^s, $$
which has a double zero at $s=0$, so 
\begin{align*} I&= \frac 1{2\pi i} \int_{(-1/2)} Z_{\A}(s)\psi(s) \frac{ds}s \\
&= \frac 1{2\pi i} \int_{(-1/2)} Z_{\A}(s) ((2M)^s-M^s) \frac{ds}{s(s+1)}+O_{a,\epsilon}\left( \left(\frac{M}{x}\right)^{\frac 12-\epsilon} \log y \right),
\end{align*}
by the same arguments as in Lemma \ref{partie négligeable} (and Merten's theorem). We now proceed as in the proof of Proposition \ref{k entier}. Moving the contour of integration to $\Re (s)=\sigma=-1+\frac 1{\log M}$ and using the bounds $Z_{\A}(\sigma+it) \ll_{\epsilon} (|t|+1)^{\frac 12+\epsilon} \log y$ and $Z'_{\A}(\sigma+it)\ll_{\epsilon} (|t|+1)^{\frac 12+\epsilon}(\log y)^2$ for $|t|\geq 2$ (by Cauchy's theorem for the derivatives), we can deduce that
$$ I= \frac{2\mu_y(a,M)-\mu_y(a,2M)}{2M}\left(1+ O_{a}\left( \frac{(\log y)^2 \log \log M}{\log M}  \right)\right)+o(1), $$
where 
$$ \mu_y(a,M):= \begin{cases}
               -\frac{1}{2}\prod_{p< y}\left( 1-\frac 1p\right)^{-1} & \text{ if } a=\pm 1 \\
	0 & \text{ else.}
              \end{cases}
$$
We conclude the proof in the same lines as that of Theorem \ref{theoreme principal}*.
\textbf{Case 2:} $\LL^{(1+\delta)\log\log \LL}\leq  y \leq \sqrt x $. Note that it is sufficient to consider this range, since $\LL^{(1+\delta)\log\log \LL}<(\log x)^{\log\log\log x}$. We have
$$ \A(x,y) = \frac{x \omega(u)}{\log y}\left(1 + O\left( \frac 1{\log y}\right)\right),$$
where $u:=\frac{\log x}{\log y}$ and $\omega(u)$ is Buchstab's function (see Théorème III.6.4 of \cite{tenenbaum}). Therefore, we can use the properties of $\omega(u)$ to show that in the range $\frac 1{\LL(x)} \leq z \leq 1+\delta$,
$$ \frac{\A(zx,y)}{A(x,y)} = z\frac{\omega\left(u-O\left(\frac {\log \LL}{\log y} \right)\right)}{ \omega(u)} \left(1 + O\left( \frac 1{\log y}\right) \right) = z \left(1 + O\left( \frac { \log \LL}{\log y}\right) \right),$$
hence Hypothesis \ref{taille de A} holds.
Now, since $q\leq 2\LL(x) < y$, we have the equality
$$ \frac 1{\phi(q)} \sum_{\substack{n\leq x \\ (n,q)=1}}\a_y(n)=\frac 1{\phi(q)} \sum_{\substack{n\leq x}}\a_y(n)= \frac{\A(x,y)}{q\gamma_y(q)},$$ 
thus using \eqref{BV pour rough} we conclude that Hypothesis \ref{bombieri-vinogradov}* holds. We now turn to an analogue of Proposition \ref{proposition evaluation de la somme arithmetique}, which we prove using \eqref{nombres rough repr 2}. We need an estimate for
\begin{align*} I&= \frac 1{2\pi i} \int_{(-1/2)} Z_{\A}(s)\psi(s) \frac{ds}s \\
&= \frac 1{2\pi i} \int_{(-1/2)} Z_{\A}(s) ((2M)^s-M^s) \frac{ds}{s(s+1)}+O_{a,\epsilon}\left( \left(\frac{M}{x}\right)^{\frac 12-\epsilon} \right),
\end{align*}
since on the line $\sigma=-1+\frac 1{\log M}$, we have the bound
\begin{align*}\prod_{\substack{p\geq y}} \left(1+\frac 1{(p-1)p^{s+1}} \right)^{-1} &\ll  \prod_{\substack{p \geq \LL^{(1+\delta)\log\log \LL}}} \left(1+\frac {C_1}{p(\log p)^{1+\delta}} \right) \ll 1,
\end{align*}
and similarly for the derivative of this product. We now study the function $Z(s):=\frac{Z_{\A}(s)}{s(s+1)}$. Using the bounds we just proved, we get that for $s=-1+\frac 1{\log M}+it$ with $|t|\geq 2$,
$$ |Z(s)|,|Z'(s)| \ll_{\epsilon} (|t|+1)^{-\frac 32+\epsilon} .$$
 If $\omega(a)\geq 2$, then $Z(s)$ and $Z'(s)$ are bounded near $s=-1$ and we conclude that $I=o(1)$. If $\omega(a)= 1$, then we define 
$$ Z_{12}(s):= Z(s)-\frac{c(M,y)}{s+1},$$
where
$$ c(M,y):= -\frac 12\frac{\phi(a)}a \prod_{p\mid a} \log p\prod_{\substack{p\geq y}} \left(1+\frac 1{(p-1)p^{\frac 1{\log M}}} \right)^{-1}.$$
One sees that for $s$ close to $-1$ with $\Re (s)=\frac 1{\log M}$, 
$$ \prod_{\substack{p\geq y}} \left(1+\frac 1{(p-1)p^{s+1}} \right)^{-1} = (1+O(|s+1|))\prod_{\substack{p\geq y}} \left(1+\frac 1{(p-1)p^{\frac 1{\log M}}} \right)^{-1}, $$
hence $ |Z'_{12}(s)| \ll \frac 1{|s+1|},$ and thus 
\begin{equation} I=-\frac 12\frac{\phi(a)}{a}\prod_{p\mid a} \log p\prod_{\substack{p\geq y}} \left(1+\frac 1{(p-1)p^{\frac 1{\log M}}} \right)^{-1}\left(1+o(1)\right).
\label{evaluation de I rough}
\end{equation} 
If $a=\pm 1$, then we take $ Z_{13}(s):= Z(s)-\frac{c(M,y)}{(s+1)^2},$ and since $Z_{13}(s) \ll \frac 1{|s+1|}$, we get that \eqref{evaluation de I rough} holds.
Finally, in our range of $y$,
$$ \prod_{\substack{p\geq y}} \left(1+\frac 1{(p-1)p^{\frac 1{\log M}}} \right)^{-1}= 1+O\left( \frac 1{\log y}\right). $$
\end{proof}

\appendix
\section{Generalities on binary quadratic forms}
\label{section généralités sur formes quadratiques}
In this section we review several classical facts about the distribution of positive definite binary quadratic forms $Q(x,y)=\alpha x^2+\beta xy+\gamma y^2$ in arithmetic progressions. We recall the notations $d=\beta^2-4\alpha \gamma$, $\S = \{ p\mid 2d \}$, $\chi_d= \left( \frac{4d}{\cdot}\right)$ and 
\begin{equation*} R_a(q) = \#\{ 1\leq x,y \leq q : Q(x,y) \equiv a \bmod q \}.\end{equation*}

\begin{lemma}
The function $R_a(q)$ is multiplicative as a function of $q$.
\label{multiplicativité de R_a(q)}
\end{lemma}

\begin{proof}
 
Define $S_a(q) := \{ (x,y) \in (\mathbb{Z}\cap [1,q])^2 : Q(x,y) \equiv a \bmod q \}$ and let $q_1,q_2$ be two coprime integers. The "reduction mapping"

\begin{align*} S_a(q_1q_2) &\rightarrow S_a(q_1) \times S_a(q_2) \\
(x,y)\bmod q_1q_2 &\mapsto ((x,y) \bmod q_1,(x,y) \bmod q_2) 
\end{align*}
is a bijection by the Chinese remainder theorem.
\end{proof}

\begin{lemma}

Take $Q(x,y):=x^2-dy^2$ with $d \equiv -1 \bmod 4$, and let $a\neq 0$ be a fixed integer such that $(a,2d)=1$.
We have that $$\frac{R_a(q)}{q^2}= \frac{\f_a(q)}{q\gamma(q)},$$
where
$$ \gamma(q) := \prod_{p\mid q} \left(1-\frac{\chi_d(p)}p \right)^{-1}$$
and $\f_a(q)$ is a multiplicative function defined on primes as follows.

For $p\nmid 2ad$, $\f_a(p^e):=1$. For $p^f \parallel a$ with $f\geq 1$ (so $p\nmid 2d$), 

\begin{equation} \f_a(p^e) := \begin{cases}
                 e +1+\frac1{p-1} &\text{ if } \chi_d(p) = 1, e\leq f \\
f+1 & \text{ if } \chi_d(p)=1, e>f \\
	\frac 1{p+1} & \text{ if } \chi_d(p)=-1, e\leq f, 2 \nmid e \\
  1-\frac 1{p+1} & \text{ if } \chi_d(p)=-1, e\leq f, 2\mid e  \\
               0 &\text{ if } \chi_d(p)=-1, e>f, 2\nmid f \\
	1 & \text{ if } \chi_d(p) = -1,e>f,  2 \mid f. \\
        \end{cases}
\label{f_a forme quad}
\end{equation}

For $p\mid 2d$ (so $p\nmid a$), 
\begin{equation} \f_a(p^e) = \begin{cases}
	1+\left( \frac a{p}\right) & \text{ if } p\neq 2 \\
	1+\left( \frac{-4}{a}\right) & \text{ if } p=2, e \geq 2 \\
	1 & \text{ if } p=2, e=1.
        \end{cases}
\label{f_a somme de 2 carrés singulière}
\end{equation}
\label{lemme evaluation de R_a}
\end{lemma}

\begin{proof}

By Lemma \ref{multiplicativité de R_a(q)}, it is enough to show that for any prime $p$ and integer $e\geq 1$,

\begin{equation} \frac{R_a(p^e)}{p^{e}} = \frac{\f_a(p^e)}{ \gamma(p)}. 
 \label{chose a prouver avec R_a}
\end{equation}

\textbf{First case: $p\nmid 2d$.}

We will proceed as in section 2.3 of \cite{blomer}, by using Gauss sums. Writing $e(n):=e^{2\pi i n}$,
\begin{align*} R_a(p^e) &= \frac 1{p^e} \sum_{ 1\leq m \leq p^e} e\left(-m\frac{a}{p^e}\right) \left(\sum_{1\leq x \leq p^e} e\left(m \frac{x^2}{p^e}\right)  \right)\left(\sum_{1\leq y \leq p^e} e\left(-md \frac{y^2}{p^e}\right)  \right) \\
 &= p^e+\frac 1{p^e}\sum_{ 1\leq m \leq p^e-1} e\left(-m\frac{a}{p^e}\right) g(m;p^e)g(-md;p^e) 
\end{align*}
where $g(m;q):=\sum_{n=1}^q e(m n^2/q)$ is a Gauss sum. We have the following properties (see \cite{berndt}):
\begin{align}
\label{somme de gauss pour 1}
&\text{If } q \text{ is odd, then}  \hspace{2cm} g(1;q)^2=\left( \frac{-1}q\right)q. \\
\label{somme de gauss ramener a 1}
&\text{If } (q,m)=1, \text{ then}\hspace{1.55cm}  g(m;q)= \left( \frac{m}q\right)g(1;q).
\end{align}
As for Ramanujan sums, (see for example (3.3) of \cite{iwaniec})
\begin{equation}
 \label{somme de Ramanujan}
\sum_{\substack{m=1 \\ (m,q)=1}}^q e(ma/q) = \phi(q) \frac{\mu(q/(q,a))}{\phi(q/(q,a))}.
\end{equation}
Using these properties, we compute
\begin{align*} R_a(p^e) &=p^e+\frac 1{p^e} \sum_{g=1}^e \sum_{ \substack{1\leq m \leq p^e-1 \\ p^{e-g} \parallel m}  } e\left(-m\frac{a}{p^e}\right) g(m;p^e)g(-md;p^e)  \\
& = p^e+\frac 1{p^e}\sum_{g=1}^e \sum_{\substack{1\leq m' \leq p^g-1 \\ p\nmid m' }} e\left(-m'\frac{a}{p^g}\right) p^{2e-2g}g(m';p^g)g(-m'd;p^g) \\
&= p^e+p^e\sum_{g=1}^e \left(\frac{d}{p^g} \right) p^{-g}\sum_{\substack{1\leq m' \leq p^g-1 \\ p\nmid m' }}  e\left(-m'\frac{a}{p^g}\right) & \text{ by \eqref{somme de gauss pour 1} and \eqref{somme de gauss ramener a 1}} \\
&= p^e+p^e\sum_{g=1}^e \left(\frac{d}{p} \right)^g \left(1-\frac 1p\right) \frac{\mu(p^g/(p^g,a))}{\phi(p^g/(p^g,a))} & \text{by \eqref{somme de Ramanujan}},
\end{align*}
which shows (after a straightforward computation) that \eqref{chose a prouver avec R_a} holds for $p\nmid 2d$.

\textbf{ Second case: $p\mid 2d$, $p\neq 2$.}

In this case we have that $p\nmid a$, since $(a,\S)=1$. The number of solutions of $x^2-dy^2 \equiv a \bmod p$ is exactly $p\left(1+\left(\frac ap \right)\right)$. Moreover, such a solution must satisfy $x\not \equiv 0 \bmod p$, thus by Hensel's lemma we obtain that
$$\frac{R_a(p^e) }{p^e} =  1+\left(\frac ap \right).$$

\textbf{ Third case: $p=2$.}

In this case, $2\nmid a$. We have that $R_a(2)=2$. Reducing the equation $x^2-dy^2 \equiv a \bmod 2^e$ (using that $d\equiv -1 \bmod 4$), we get
$$ x\not \equiv y \bmod  2 , \hspace{2cm} x^2+y^2 \equiv a \bmod 4, $$
which shows that there are no solutions if $a\equiv 3 \bmod 4$. Suppose now that $a\equiv 1  \bmod 4$. For $e\geq 3$, an odd integer is a square $\bmod$ $2^e$ if and only if it is congruent to $1 \bmod 8$; in fact we have the following isomorphism:
$$\left( \mathbb Z / 2^e \mathbb Z\right)^{\times} \simeq \mathbb Z / 2\mathbb Z \times \mathbb Z / 2^{e-2} \mathbb Z.$$
 Using these well-known facts, we find the number of solutions to $x^2-dy^2\equiv a  \bmod 2^e$ such that $x$ is odd is
\begin{align*}
 &=4\# \{ y \bmod 2^e : d y^2+a \equiv 1  \bmod 8 \} \\
&=2^{e-1} \# \{ y \bmod 8 : y^2 \equiv d^{-1}(1-a)  \bmod 8 \} = 2^e
\end{align*}
since $d^{-1}(1-a)\equiv 0,4 \bmod 8$. Now the number of solutions of $x^2-dy^2\equiv a  \bmod 2^e$ such that $x$ is even is just the number of solutions of $y^2-d^{-1}x^2\equiv -d^{-1} a \bmod 2^e$ such that $y$ is odd, which as we have shown (and using that $-d^{-1}\equiv 1 \bmod 4$) is equal to $2^e$. We conclude that 
$$ \frac{R_a(2^e)}{2^e} = \begin{cases}
                           2 &\text{ if } a\equiv 1 \bmod 4 \\
			0 & \text{ if } a \equiv 3 \bmod 4.
\end{cases}
$$
\end{proof}

\begin{lemma}
Take $Q(x,y):=\alpha x^2 + \beta xy + \gamma y^2$ with $(\alpha,\beta,\gamma)=1$ and $d=\beta^2-4\alpha \gamma \equiv 1,5,9,12,13 \bmod 16$. Let $a\neq 0$ be a fixed integer with $(a,2d)=1$.
We have for $(q,2d)=1$ that $$\frac{R_a(q)}{q^2}= \frac{\f_a(q)}{q\gamma(q)},$$
where
$$ \gamma(q) := \prod_{p\mid q} \left(1-\frac{\chi_d(p)}p \right)^{-1}$$
and $\f_a(q)$ is defined as in Lemma \ref{lemme evaluation de R_a}. Moreover, for $p\mid 2d$, $p\neq 2$ (so $p\nmid a$), 

\begin{equation}
\frac{R_a(p^e)}{p^e}= \begin{cases}
			1+\left( \frac {\alpha a}{p}\right) & \text{ if } p\neq 2, p\nmid \alpha \\
			1+\left( \frac {\gamma a}{p}\right) & \text{ if } p\neq 2, p\nmid \gamma \\
\end{cases}
\label{determination de R_a(p^e)}
\end{equation}
and
\begin{equation}\frac{R_a(2^e)}{2^e}= \begin{cases}
			1 &\text{ if } 2\mid \beta,e=1 \\
                        1+\left( \frac{-4}{\alpha a}\right) & \text{ if } 2\mid \beta, 2\nmid \alpha, e \geq 2 \\
			1+\left( \frac{-4}{\gamma a}\right) & \text{ if } 2\mid \beta, 2\nmid \gamma, e \geq 2 \\
			\frac 12 & \text{ if } 2\nmid \beta, 2\mid \alpha \gamma \\
				\frac 32 & \text{ if } 2\nmid \alpha \beta \gamma. 
                        \end{cases}
\label{determination de R_a(2^e)}
\end{equation}

\label{lemme evaluation de R_a forme quad generale}
\end{lemma}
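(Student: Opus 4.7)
The approach is to adapt the Gauss-sum argument of Lemma \ref{lemme evaluation de R_a} from the principal form $x^2-dy^2$ to the general form $Q(x,y)=\alpha x^2+\beta xy+\gamma y^2$. By Lemma \ref{multiplicativit� de R_a(q)}, it suffices to compute $R_a(p^e)$ prime-by-prime.

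For $p\nmid 2d$, the key tool is the square-completion identity $4\alpha Q(x,y)=(2\alpha x+\beta y)^2-dy^2$. When $p\nmid 2\alpha$, the linear map $(x,y)\mapsto(u,v):=(2\alpha x+\beta y,\,y)$ is a bijection of $(\Z/p^e\Z)^2$, and a short computation using \eqref{somme de gauss pour 1}--\eqref{somme de gauss ramener a 1} and $g(1;p^e)^2=\bigl(\tfrac{-1}{p}\bigr)^{\!e}p^e$ shows that, for $(m,p)=1$,
$$\sum_{x,y=1}^{p^e}e\!\left(\tfrac{m Q(x,y)}{p^e}\right)=g\!\left(m\alpha;p^e\right)g\!\left(-m d(4\alpha)^{-1};p^e\right)=\left(\tfrac{d}{p}\right)^{\!e}p^e,$$
since the factor $m^2\alpha(4\alpha)^{-1}$ contributes a square to the Jacobi symbol. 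This is exactly the value obtained in the first case of the proof of Lemma \ref{lemme evaluation de R_a}, so the subsequent Ramanujan-sum analysis carries over verbatim (the re-indexing $m\mapsto 4\alpha m$ absorbs $4\alpha$ into the Fourier character, and $(p^g,4\alpha a)=(p^g,a)$ since $p\nmid 4\alpha$), yielding $R_a(p^e)/p^e=\f_a(p^e)/\gamma(p^e)$. The residual cases are handled as follows: if $p\mid\alpha$, $p\nmid\gamma$ use the symmetric identity $4\gamma Q(x,y)=(2\gamma y+\beta x)^2-dx^2$; if $p\mid(\alpha,\gamma)$, which forces $p\nmid\beta$ by coprimality, apply a preliminary unimodular shear $(x,y)\mapsto(x,x+y)$ that preserves $d$ and $R_a$ and yields an equivalent form with $\alpha'=\alpha+\beta+\gamma\not\equiv 0\pmod p$.

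For odd $p\mid d$ (so $p\nmid a$ and, say, $p\nmid\alpha$, else use the symmetric identity), the same square-completion transforms $Q(x,y)\equiv a\pmod{p^e}$ into $u^2\equiv 4\alpha a+dv^2\pmod{p^e}$. Because $p\mid d$ and $p\nmid 4\alpha a$, the right-hand side is a unit modulo $p$ for every $v$, hence a square modulo $p^e$ iff it is a square modulo $p$, which occurs iff $\bigl(\tfrac{\alpha a}{p}\bigr)=1$. Summing over the $p^e$ values of $v$ gives $R_a(p^e)=p^e\bigl(1+\bigl(\tfrac{\alpha a}{p}\bigr)\bigr)$, matching \eqref{determination de R_a(p^e)}.

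The main obstacle is $p=2$, where the square-completion fails (no factor of $2$ is invertible modulo $2^e$) and the count depends delicately on the parities of $\alpha,\beta,\gamma$. The hypothesis $d\equiv 1,5,9,12,13\pmod{16}$ is chosen precisely so that these parities are controlled by $d\bmod 16$: since $d=\beta^2-4\alpha\gamma$, we have $2\mid\beta$ iff $4\mid d$, i.e.\ iff $d\equiv 12\pmod{16}$, while the four odd residues $1,5,9,13$ further distinguish the parities of $\alpha$ and $\gamma$ (since $\beta^2\equiv 1\pmod 8$ forces $4\alpha\gamma\bmod 16$ to determine the residue class of $d$). For each of the five sub-cases in \eqref{determination de R_a(2^e)}, I would compute $R_a(2),R_a(4),R_a(8)$ by direct enumeration and then lift to arbitrary $e$ via Hensel's lemma at non-singular solutions, following the third case of the proof of Lemma \ref{lemme evaluation de R_a} and using the decomposition $(\Z/2^e\Z)^{\times}\simeq\Z/2\Z\times\Z/2^{e-2}\Z$ to decide squareness modulo $2^e$ from the residue modulo $8$. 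The bookkeeping is finite and routine but tedious.
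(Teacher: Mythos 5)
Your proposal is correct in substance and, for all odd primes, is essentially the paper's argument: complete the square via $4\alpha Q=(2\alpha x+\beta y)^2-dy^2$ (or its $\gamma$-analogue), observe that $(x,y)\mapsto(2\alpha x+\beta y,y)$ is a bijection mod $p^e$ when $p\nmid 2\alpha$, and reduce to the principal form with $a$ replaced by $4\alpha a$; the unimodular shear $(x,y)\mapsto(x,x+y)$ for $p\mid(\alpha,\gamma)$ is exactly the paper's Case 3. (For odd $p\mid d$ you give a direct square-root count where the paper simply cites the corresponding case of Lemma \ref{lemme evaluation de R_a}; the computations are identical.) The genuine divergence is at $p=2$. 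There your plan --- enumerate mod $2,4,8$ and lift --- is workable but glosses over the one real difficulty: when $2\mid\beta$ both partials $2\alpha x+\beta y$ and $\beta x+2\gamma y$ are even, so ``Hensel's lemma at non-singular solutions'' does not apply and you are thrown entirely onto the squares-mod-$2^e$ structure, with a nontrivial bookkeeping split according to the parity of $y$ (the right-hand side $\alpha a+\tfrac d4 y^2$ is even for odd $y$, so one must also count even square roots). The paper sidesteps all of this with the half-integral completion $Q=\frac1\alpha\bigl(\bigl(\alpha x+\tfrac\beta2 y\bigr)^2-\tfrac d4 y^2\bigr)$, available precisely because $\beta$ is even, and the hypothesis $d\equiv12\bmod16$ guarantees $\tfrac d4\equiv-1\bmod4$, so the $p=2$ analysis of Lemma \ref{lemme evaluation de R_a} applies verbatim with discriminant $d/4$; for $2\nmid\beta$ the gradient is $(y,x)\bmod 2$, genuinely non-singular, and Hensel reduces everything to $R_a(2)$ as you say. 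So your route closes, but you should either carry out the even-$\beta$ mod-$2^e$ count explicitly or, better, borrow the paper's division by $\alpha$ rather than $4\alpha$, which turns that case into a citation. (A minor imprecision: the odd classes $1,5,9,13\bmod16$ determine only whether $\alpha\gamma$ is even, not the individual parities of $\alpha$ and $\gamma$; this is harmless since \eqref{determination de R_a(2^e)} is already stated in terms of those parities.)
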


\begin{proof}
First write $Q(x,y)$ in four different ways:
\begin{align}
 \label{compl. de carre 1}
Q(x,y) &= \frac{1}{4\alpha}((2\alpha x+\beta y)^2-dy^2) \\
&=\frac 1{\alpha}\big(\big(\alpha x+\frac{\beta}2 y\big)^2-\frac d4 y^2\big)  \label{compl. de carre 1 divise par 2} \\
 \label{compl. de carre 2}
&= \frac{1}{4\gamma}((\beta x+2\gamma y)^2-dx^2) \\
 \label{compl. de carre 2 divise par 2}
&=\frac 1{\gamma}\big(\big(\gamma y+\frac{\beta}2 x\big)^2-\frac d4 x^2\big).
\end{align}

We will split in five distinct cases.

\textbf{Case 1: $p\nmid 2\alpha$. }
In this case, we use the representation \eqref{compl. de carre 1}. Note that the mapping $\phi_y: x\mapsto 2\alpha x+\beta y$ is an automorphism of $\mathbb Z/p^e\mathbb Z$, so 
$$ R_a(p^e) = \#\{ 1\leq x,y \leq p^e : x^2-dy^2 \equiv 4\alpha a \bmod p^e\}.$$
Going through the proof of Lemma \ref{lemme evaluation de R_a}, we see that 
$$ \frac{R_a(p^e)}{p^e} = \frac{\f_{4\alpha a}(p^e)}{\gamma(p)}=\frac{\f_{\alpha a}(p^e)}{\gamma(p)}\hspace{0.5cm}\left(=\frac{\f_{a}(p^e)}{\gamma(p)} \text{ if } p\nmid d \right).$$

\textbf{Case 2: $p\nmid 2\gamma$. }
In this case, we proceed in an analogous way to the first case, using the representation \eqref{compl. de carre 2} to get that
$$ \frac{R_a(p^e)}{p^e} = \frac{\f_{4\gamma a}(p^e)}{\gamma(p)}=\frac{\f_{\gamma a}(p^e)}{\gamma(p)}\hspace{0.5cm} \left(=\frac{\f_{a}(p^e)}{\gamma(p)} \text{ if } p\nmid d \right).$$

\textbf{Case 3: $p\mid \alpha$, $p\mid \gamma$, $p\neq 2$.}
In this case $p\nmid \beta$, so $p\nmid d$. Writing $X:=x+y$ and $Y:=y$, we compute that
$$ \alpha X^2 + \beta XY + \gamma Y^2 = \alpha x^2 + (2\alpha + \beta) xy + (\alpha+\beta+\gamma) y^2 =: \alpha' x^2 + \beta' xy + \gamma' y^2. $$
We have $p\mid \alpha'$, $p\nmid \beta '$ and $p\nmid \gamma '$, which reduces the problem to Case 2, and so
$$ \frac{R_a(p^e)}{p^e} = \frac{\f_{(\alpha+\beta+\gamma) a}(p^e)}{\gamma(p)} = \frac{\f_{a}(p^e)}{\gamma(p)}.$$

\textbf{Case 4.1: $p=2$, $2\mid \beta$.}
In this case, $d \equiv 0 \bmod 4$. We have that either $2\nmid \alpha$, or $2\nmid \gamma$. In the first event we use representation \eqref{compl. de carre 1 divise par 2}, which gives
$$ R_a(2^e)=\#\{ 1\leq x,y \leq 2^e : x^2-d'y^2 \equiv \alpha a \bmod 2^e \}$$
with $d':=\frac d4\equiv -1 \bmod 4$. Going back to the proof of Lemma \ref{lemme evaluation de R_a}, we get that 
$$ \frac{R_{a}(2^e)}{2^e} = \f_{\alpha a}(2^e). $$
In the event that $2\nmid \gamma$, the result is
$$ \frac{R_{a}(2^e)}{2^e} = \f_{\gamma a}(2^e). $$
Note that if $2\nmid \alpha \gamma$, then since $\frac d4 \equiv -1 \bmod 4$, we have $\alpha \equiv \gamma \bmod 4$, so 
$$\f_{\alpha a}(2^e) = \f_{\gamma a}(2^e) .$$

\textbf{Case 4.2: $p=2$, $2\nmid \beta$.}
In this case, $2\nmid d$ and $2\nmid a$. An easy application of Hensel's lemma in either of the variables $x$ or $y$ (since one of them has to be odd) yields 
$$ \frac{R_a(2^e)}{2^e} = \frac{R_a(2)}{2}, $$
and all the possibilities are contained in the following table.
\vspace{0.5cm}
\begin{center}
\begin{tabular}{|c|c|c|c|}
\hline
$\alpha \bmod 2$ & $\beta \bmod 2$ & $\gamma \bmod 2$ & $R_a(2)$ \\
\hline
0 & 1 & 0 & 1 \\
0 & 1 & 1 & 1 \\
1 & 1 & 0 & 1 \\
1 & 1 & 1 & 3 \\
\hline
\end{tabular}
\end{center}
\end{proof}

\end{document}